\theoremstyle{change}
\newtheorem{theorem}{Theorem}[section]
\newtheorem{corollary}[theorem]{Corollary}
\newtheorem{lemma}[theorem]{Lemma}
\newtheorem{proposition}[theorem]{Proposition}
\newtheorem{definition}[theorem]{Definition}
\newtheorem{remark}[theorem]{Remark}
\newtheorem{remarks}[theorem]{Remarks}
\newtheorem{examples}[theorem]{Examples}
\def\proofsymbol{\rule{0.5em}{0.5em}}
\theoremstyle{nonumberplain}
\newtheorem{proof}{Proof}
\theoremstyle{empty}
\newtheorem{proofof}{}
\setlist{
  listparindent=\parindent,
  parsep=0pt,
  itemsep=0.5em plus 0.25em minus 0.2em,
}
\newlist{lenumerate}{enumerate}{3}
\setlist[lenumerate]{
  listparindent=\parindent,
  itemsep=0.5em plus 0.25em minus 0.3em,
  fullwidth,
  labelsep=0.75em,
  label=\arabic*.
}
\numberwithin{equation}{section} 
\def\phi{\varphi}
\def\C{{\mathbb C}}
\def\II{{\mathbb I}}
\def\N{{\mathbb N}}
\def\R{{\mathbb R}}
\def\CA{{\mathcal A}}
\def\CB{{\mathcal B}}
\def\CC{{\mathcal C}}
\def\CD{{\mathcal D}}
\def\CE{{\mathcal E}}
\def\CF{{\mathcal F}}
\def\CH{{\mathcal H}}
\def\CL{{\mathcal L}}
\def\CM{{\mathcal M}}
\def\CN{{\mathcal N}}
\def\CP{{\mathcal P}}
\def\CS{{\mathcal S}}
\def\GA{{\mathfrak A}}
\newcommand\SM{\mathcal{S}\mathcal{M}}
\newcommand\spL{L^{\mathrm{sp}}}
\newcommand\olM{\overline{\mathcal M}}
\newcommand\CBF{\mathcal{B}\mathcal{F}}
\newcommand\wto{\overset{\rm w}{\to}}
\def\qand{\quad\mbox{and}\quad}
\def\Poiss{\mathrm{Poiss}}
\def\Arg{\mathrm{Arg}}
\def\tsum{{\textstyle \sum}}
\def\medcup{\mathop{\textstyle \bigcup}}
\def\medcap{\mathop{\textstyle \bigcap}}
\def\tmedcup{{\textstyle \bigcup}}
\def\olPhi{\overline{\Phi}}
\def\oli{\overline{\iota}}
\def\olCM{\overline{\mathcal M}}
\def\olCN{\overline{\mathcal N}}
\def\d{{\mathrm{d}}}
\def\1{{{\ae}}}
\def\2{{{\o}}}
\def\3{{{\aa}}}
\def\6{\, {\mathrm d}}
\newcommand\ci{\operatorname{i}}
\newcommand\e{\operatorname{e}}
\def\<{{\langle}}
\def\>{{\rangle}}
\def\brinv{{\langle -1\rangle}}
\def\unit{{{\pmb 1}}}
\def\spe{\mathrm{sp}}
\def\id{\mathrm{id}}
\def\ID{\mathcal{ID}}
\def\supp{\mathrm{supp}}
\def\alg{\mathrm{Alg}}
\def\fc{\mathop{\boxplus}}
\begin{document}

\title{Completely Random Measures and L\'evy Bases \\ in Free probability}
\author{
Francesca Collet,  Fabrizio Leisen and
Steen Thorbj{\o}rnsen
}


\maketitle

\begin{abstract}
This paper develops a theory for completely random measures
in the framework of free probability. A general existence result
for free completely random measures is established, and in
analogy to the classical work of Kingman it is proved
that such random measures can be decomposed into the sum of a purely
atomic part and a (freely) infinitely divisible part. The latter part
(termed a free L\'evy basis) is studied in detail in terms of the free
L\'evy-Khintchine representation and a theory parallel to the
classical work of Rajput and Rosinski is developed. Finally a
L\'evy-It\^o type decomposition for general free L\'evy bases is
established.
\end{abstract}

\section{Introduction}

In the paper \cite{Ki67} J.F.C.~Kingman introduced the concept of completely
random measures. Specifically a \emph{random measure} on a measurable space
$(X,\CS)$ is a collection $N=\{N(B,\cdot)\mid B\in\CS\}$ of non-negative random
variables, defined on some probability space $(\Omega,\CF,P)$, such that
the mapping $B\mapsto N(B,\omega)$ is a measure on the
$\sigma$-algebra $\CS$ for
each fixed $\omega$ in $\Omega$. If the random variables
$N(B_1,\cdot),\ldots,N(B_n,\cdot)$ are further assumed to be
independent, whenever 
$B_1,\ldots,B_n$ are \emph{disjoint} sets from $\CS$, then $N$ is referred
to as a \emph{completely random measure}. Kingman established (under certain
additional conditions) that a completely random measure $N$ can always
be decomposed into a sum $N_a+N_c$ of two mutually independent
completely random measures, where, for each $\omega$,
$N_a(\cdot,\omega)$ is purely
atomic, while $N_c(\cdot,\omega)$ is atom less.
For the second term Kingman showed further that the
distribution of the random variable $N_c(B,\cdot)$ is infinitely
divisible for any $B$ in $\CS$, and hence $N_c$ is an example of
what is nowadays commonly referred to as a \emph{L\'evy basis}. The infinite
divisibility of the ``marginals'' allows for the
employment of L\'evy-Khintchine techniques, and the resulting
theory was developed by B.S.~Rajput and J.~Rosinski in the celebrated
paper \cite{RR}, where more general ``index sets'' than
$\sigma$-algebras were also considered. To be precise, a L\'evy
basis\footnote{In \cite{RR} a L\'evy basis was referred to as an
  infinitely divisible, independently scattered random measure.}
on a ring $\CE$ of subsets of $X$ is a
family $N=\{N(E,\cdot)\mid E\in\CE\}$ of real valued random variables,
defined on some probability space $(\Omega,\CF,P)$, such that

\begin{itemize}

\item For all $E$ in $\CE$ the distribution of $N(E,\cdot)$ is an
  infinitely divisible probability measure on $\R$.

\item If $n\in\N$ and $E_1,E_2,\ldots E_n$ are disjoint sets from
  $\CE$, then $N(E_1,\cdot),N(E_2,\cdot),\ldots,N(E_n,\cdot)$ are
  independent random variables. 

\item If $(E_n)_{n\in\N}$ is a sequence of disjoint sets from $\CE$,
  such that $\medcup_{n\in\N}E_n\in\CE$, then it holds with
  probability 1 that
  $N(\medcup_{n\in\N}E_n,\cdot)=\sum_{n=1}^{\infty}N(E_n,\cdot)$.

\end{itemize}

In recent years much of the theory of stochastic processes (with very
general index sets) has found a subsuming and unifying framework in
L\'evy bases, and from that perspective it is a natural step
in the development of free probability to manifest
a corresponding theory for \emph{free} L\'evy bases and more generally 
\emph{free} completely random measures. This theory
can also be expected to provide a concrete model for the asymptotics
of high dimensional random-matrix valued random measures (and integrals with
respect to such), which have received some attention recently in
various special cases (see e.g.\ \cite{PARA}, \cite{DMRA} and
\cite{PaPePA}), but the theory remains to be fully developed.

In this paper we introduce natural counterparts of completely random
measures and L\'evy bases in the context of free probability,
where the classical notion of independence is replaced by that of free
independence, and infinite divisibility refers to the corresponding
notion of free convolution (see \cite{vdn} or \cite{BNT06} for an
introduction to free probability).
We establish thus general existence results
for free completely random measures and for free L\'evy bases. In addition
we prove, in full analogy with the
mentioned results of Kingman, that a non-negative free completely
random measure $M$ can be decomposed into a sum 
\begin{equation}
M=M_a+M_c
\label{intro_eq1}
\end{equation}
of two freely independent
terms, where $M_a$ is purely atomic (in a natural sense) free
completely random measure, while $M_c$
is a free L\'evy basis (thus with freely infinitely
divisible marginals). We derive a similar decomposition in the more general
situation where the assumption of positivity of the marginals of $M$ is
dropped, although some moment conditions need to be imposed in this
case. We focus subsequently on free L\'evy bases, where the free infinite
divisibility of the marginals allows for invoking the Bercovici-Pata
bijection (see~Subsection~\ref{subsec:Free_ID}) and thus for
transferring major parts of the theory of Rajput and Rosinski to the
free setting. The resulting theory subsumes and unifies a major part
of the existing theory on free L\'evy processes and related topics.
Moreover, it includes a
theory of integration of deterministic functions with respect to a
free L\'evy basis, which is further used to
establish a L\'evy-It\^o type decomposition of a general free L\'evy
basis into the sum of two freely independent terms, the first of which
is of free Brownian motion type, while the second is of pure jump type.
This result 
covers in particular the free analog of the result by Pedersen (see
\cite{jp}) for classical L\'evy bases, and in another direction it
generalizes the L\'evy-It\^o type decomposition obtained for free
L\'evy processes in \cite{BNT05}. Inserting the L\'evy-It\^o decomposition
of $M_c$ in \eqref{intro_eq1} evidently leads to a refined
decomposition for general free completely random measures.

The remaining part of this paper is organized as follows.
In Section~\ref{sec:prelims} we provide
background material on $\delta$-rings (and measures thereon), the
measure topology and free infinite divisibility.
In Section~\ref{sec:FCRM} we give the formal
definition of free completely random measures and of
free L\'evy bases, and we state the mentioned general
existence result (the proof of which is deferred to
Section~\ref{sec:Proof_Existence_FLB}). We establish furthermore the
described analogs of Kingman's decomposition theorem for completely
random measures.
In Section~\ref{sec:Rajput_Rosinski_theory_for_FLB}
we develop free analogs of essential parts of the Rajput-Rosinski
theory, and in Section~\ref{sec:Integration_wrt_FLB}
we develop a theory of integration with respect to free L\'evy
bases. In particular we construct free L\'evy bases with a deterministic
``density'' with respect to another (given) free L\'evy basis, and
this is further used
in Section~\ref{sec:Levy-Ito_for_FLB}, where we
establish the described
L\'evy-It\^o type decomposition for free L\'evy bases.
In the final Section~\ref{sec:Proof_Existence_FLB} we prove the
general existence of free completely random measures and of free L\'evy
bases. While the existence of ``classical''  random measures and
L\'evy bases is generally based 
on the Kolmogorov extension theorem, our construction is based
on free products of von Neumann algebras and the theory of (unbounded)
operators affiliated with such. As 
this construction heavily builds on the theory of operator algebras
and is less probabilistic in nature, we have deferred it to the final
section of the paper. This is mainly
to bring focus to the probabilistic aspects of the developed theory and to
emphasize the analogies to the theories of Kingman and of Rajput and
Rosinski. Accordingly the first six sections of the paper can be read
without reference to the detailed construction given in
Section~\ref{sec:Proof_Existence_FLB}. To our knowledge this construction,
dealing throughout with unbounded operators, has not been carried out
in detail previously in the literature even for the case of free
L\'evy processes.
The paper concludes with an appendix that covers some specific aspects of the
theory of von Neumann algebras needed for the construction in
Section~\ref{sec:Proof_Existence_FLB}.

\section{Preliminaries}
\label{sec:prelims}

In this section we provide background material on various definitions
and results that are fundamental for the rest of the paper.

\subsection{Measures on $\delta$-rings}\label{subsec:delta_rings} 

Recall that a ring of subsets of a non-empty set $X$ is a collection
$\CE$ of subsets of $X$ satisfying that $A\cup B$, $A\setminus
B\in\CE$, whenever $A,B\in\CE$. Since $A\cap
B=A\setminus(A\setminus B)$, a ring is automatically closed under
finite intersections. If $\CE$ is even closed under all countable
intersections, then it is referred to as a $\delta$-ring. By
$\sigma(\CE)$ we denote the smallest $\sigma$-algebra on $X$
containing $\CE$. It is noteworthy that if $\CE$ is a $\delta$-ring,
then the following implication holds for all subsets $A,E$ of $X$:
\begin{equation}
A\in\sigma(\CE) \quad\text{and}\quad E\in\CE \quad\implies\quad  A\cap
E\in\CE.
\label{eq_Hereditary_prop_delta_ring}
\end{equation}
A (finite) \emph{signed measure} on a $\delta$-ring $\CE$ is a mapping
$\Theta\colon\CE\to\R$ satisfying the following two conditions:

\begin{enumerate}[a]

\item\label{def:maal(b)}
$\Theta(A\cup B)=\Theta(A)+\Theta(B)$ for any disjoint sets $A,B$ from $\CE$,

\item\label{def:maal(c)}
$\lim_{n\to\infty}\Theta(B_n)=0$ for any decreasing sequence $(B_n)_{n\in\N}$ of sets
from $\CE$, such that $\medcap_{n\in\N}B_n=\emptyset$.

\end{enumerate}

For sequences $(B_n)_{n\in\N}$ of sets as described in \ref{def:maal(c)}, we use the 
notation $B_n\downarrow\emptyset$.
Conditions \ref{def:maal(b)} and \ref{def:maal(c)}
(together) are equivalent to the condition that
$\Theta(\medcup_{n\in\N}E_n)=\sum_{n=1}^{\infty}\Theta(E_n)$ for any
sequence $(E_n)_{n\in\N}$ of disjoint sets from $\CE$, such that
$\medcup_{n\in\N}E_n\in\CE$.
For a mapping $\Theta\colon\CE\to[0,\infty]$ this latter condition, together
with the condition $\Theta(\emptyset)=0$, defines a (positive) measure
on $\CE$. In case $\Theta(A)\in[0,\infty)$ for all $A$ in $\CE$, we refer to
$\Theta$ as a \emph{finite} measure on $\CE$.
By a suitable variant of the Carath\'eodory Extension Theorem, a (positive)
measure on $\CE$ can always be extended to a (positive)
measure on $\sigma(\CE)$. Note however that the extension of a finite
measure may fail to be finite. Correspondingly it does not generally
hold that a signed measure on $\CE$ can be extended to a signed
measure on $\sigma(\CE)$. In fact, under the additional assumption:
\begin{equation}
\text{There exists a sequence $(U_n)_{n\in\N}$ of sets from $\CE$, such that
  $\medcup_{n\in\N}U_n=X$,}
\label{eq_determining_sequence}
\end{equation}
any signed measure $\Theta$ on a $\delta$-ring $\CE$ can be written
uniquely in the form:
\begin{equation}
\Theta(E)=\Theta^+(E)-\Theta^-(E), \qquad (E\in\CE),
\label{eq_Hahn-Jordan}
\end{equation}
where $\Theta^+,\Theta^-$ are two (positive)
measures on $\sigma(\CE)$, 
which are singular in the sense that there exists a set $S$ from
$\sigma(\CE)$, such that $\Theta^+(S^c)=\Theta^-(S)=0$. These
$\Theta^+,\Theta^-$ are not necessarily finite measures on $\sigma(\CE)$, but
$\Theta^+(E),\Theta^-(E)<\infty$ for all $E$ in $\CE$, so in particular
$\Theta^+,\Theta^-$ are $\sigma$-finite (cf.\ \eqref{eq_determining_sequence}).
The unique decomposition \eqref{eq_Hahn-Jordan}
further allows us to define the total variation measure $|\Theta|$ of $\Theta$ as
\[
|\Theta|(A)=\Theta^+(A)+\Theta^-(A), \qquad(A\in\sigma(\CE)).
\]
In this setup we mention finally, that if $\kappa$ is a $\sigma$-finite
(positive) measure on $\sigma(\CE)$, such that $|\Theta|\le\kappa$, then $\Theta^+$
and $\Theta^-$ are both absolutely continuous with respect to $\kappa$
with $\sigma(\CE)$-measurable densities $h^+,h^-\colon X\to\R$, which
may be chosen such that $h^+(x),h^-(x)\in[0,1]$ for all $x$ in
$X$. Then if we put $h=h^+-h^-$, it follows for any set $E$ from $\CE$
that
\[
\Theta(E)=\int_Eh^+\6\kappa-\int_Eh^-\6\kappa=\int_Eh\6\kappa,
\]
so that $\Theta$ has density $h$ with respect to $\kappa$. Note
however that $h$ need not be an element of $\CL^1(\kappa)$.


\subsection{Free Independence}

Free independence (introduced by Voiculescu) is a notion of
independence which in many respects behaves similarly to the classical
notion of independence of random variables, and it is possible to
develop a probability theory based on this notion in parallel to
the classical theory of probability.
At the same time free independence
cannot be observed among classical random variables (except for
trivial cases). The right framework for free independence is that of
quantum probability, where the random variables are
modelled mathematically as
selfadjoint operators affiliated with a $W^*$-probability space. A
$W^*$-probability space is a pair $(\CM,\tau)$ consisting of a von
Neumann algebra $\CM$ acting on a Hilbert space $\CH$
equipped with a normal faithful tracial state
$\tau\colon\CM\to\C$ (see \cite{vdn} for details). A (possibly
unbounded) operator $a$ in $\CH$ is affiliated with $\CM$, if $au=ua$
for any unitary operator $u$ on $\CH$ satisfying that $ub=bu$ for all
$b$ in $\CM$. If $a$ is selfadjoint, this is equivalent to the
condition that $f(a)\in\CM$ for any bounded Borel function $f\colon\R\to\R$, where
$f(a)$ is defined in terms of spectral calculus. In this case the \emph{spectral
    distribution} of $a$ is the unique Borel-probability measure
  $\spL\{a\}$ on $\R$, satisfying that
\[
\int_{\R}f(t)\,\spL\{a\}(\d t)=\tau(f(a))
\]
for any bounded Borel-function $f\colon\R\to\R$. Throughout the paper
we denote by $\CBF(\R)$ the algebra of all real-valued \emph{Borel}
functions on $\R$. Furthermore we let $\CBF_b(\R)$ denote the
subalgebra of bounded functions from $\CBF(\R)$.

If $a_1,\ldots,a_n$ are (possibly unbounded) selfadjoint
operators affiliated with $\CM$, they are said to be freely
independent (with respect to $\tau$), if
\begin{equation}
\tau\big(\big[f_1(a_{i_1}) -\tau(f_1(a_{i_1}))\big]
\big[f_2(a_{i_2}) -\tau(f_2(a_{i_2}))\big]
\cdots\big[f_m(a_{i_m}) -\tau(f_m(a_{i_m}))\big]\big)=0,
\label{prel_eq1}
\end{equation}
for any $m\in\N$, any functions
$f_1,\ldots,f_m$ from $\CBF_b(\R)$
and any $i_1,\ldots,i_m$ from
$\{1,\ldots,n\}$, such that $i_1\ne i_2, i_2\ne i_3, \ldots,
i_{m-1}\ne i_m$.

If $\CA$ is a unital subalgebra of $\CM$, we denote by $\CA^\circ$ the
subspace of centered elements of $\CA$, i.e.\
$\CA^\circ=\{a\in\CA\mid\tau(a)=0\}$.
A finite number of unital subalgebras $\CA_1,\ldots,\CA_n$ of $\CM$
are said to be freely independent, provided that
\[
\tau(a_{1}a_{2}\cdots a_{m})=0,
\]
whenever $a_1\in\CA_{i_1}^\circ,\ldots, a_{m}\in\CA_{i_m}^\circ$ for
suitable $i_1,\ldots,i_m\in\{1,\ldots,n\}$ such that $i_1\ne i_2$,
$i_2\ne i_3$, $\ldots$, $i_{m-1}\ne i_m$.
For any collection $\{T_i\mid i\in I\}$ of selfadjoint operators
affiliated with $\CM$ we denote by $\alg(\{T_i\mid i\in\ I\})$
(respectively $W^*(\{T_i\mid i\in I\})$)
the unital subalgebra (respectively $W^*$-subalgebra)
of $\CM$ generated by the subset $\{f(T_i)\mid i\in
I, f\in\CBF_b(\R)\}$ of $\CM$. We refer to $\alg(\{T_i\mid i\in\ I\})$ and
$W^*(\{T_i\mid i\in\ I\})$ as, respectively, the unital subalgebra
and the $W^*$-subalgebra of $\CM$ generated by $\{T_i\mid i\in I\}$.
It follows then that selfadjoint operators
$a_1,\ldots,a_n$ affiliated with $\CM$ are freely independent, exactly
when they generate freely independent unital subalgebras (or, equivalently,
$W^*$-subalgebras) of $\CM$.

\subsection{The measure topology}

For a $W^*$-probability space $(\CM,\tau)$ we denote by
$\overline{\CM}$ the space of closed, densely defined
(possibly unbounded) operators affiliated with $\CM$. Then
$\overline{\CM}$ is a $*$-algebra under the adjoint operation and the so
called strong sum and strong product. For example the strong sum of
two operators $a,a'$ from $\olM$ is the closure of the operator
$a+a'$, and the strong product is defined similarly.

For any positive numbers $\epsilon,\delta$ we introduce the
subset $N(\epsilon,\delta)$ of $\olM$ given by
\[
N(\epsilon,\delta)=\{a\in\olM\mid
\tau[1_{(\epsilon,\infty)}(|a|)]<\delta\},
\]
where $|a|=(a^*a)^{1/2}$.
\emph{The measure topology} on $\olM$ is the vector space topology on $\olM$
for which the sets $N(\epsilon,\delta)$,
$\epsilon,\delta\in(0,\infty)$, form a neighborhood basis at $0$. In
this topology the adjoint operation and the (strong-) sum and product
are all continuous operations. In addition the measure topology
satisfies the first axiom of countability and is a complete Hausdorff
topology.

For a sequence $a,a_1,a_2,a_3,\ldots$ of operators from $\olM$ we have that
\[
a_n\to a \quad\text{in the measure topology}\quad \iff \quad
\spL\{|a_n-a|\}\overset{\rm w}{\to}\delta_0 \quad\text{as
  $n\to\infty$,}
\]
and thus convergence in the measure topology is the quantum
probability analog of convergence in probability. For that reason, and
for brevity, we will occasionally use the notation
$a_n\overset{\rm P}{\to} a$ or $a=\text{P-}\lim_{n\to\infty}a_n$ to
express that a sequence $(a_n)_{n\in\N}$ converges to $a$ in the measure topology.

If $a,a_1,a_2,a_3,\ldots$ are all selfadjoint, we note further the implications:
\begin{equation}
\begin{split}
a_n\to a \quad\text{in the measure topology}\quad &\iff \quad
\spL\{a_n-a\}\overset{\rm w}{\to}\delta_0 \quad\text{as
  $n\to\infty$}
\\
&\, \implies \quad \spL\{a_n\}\overset{\rm w}{\to}\spL\{a\}.
\end{split}
\label{prel_eq2}
\end{equation}
For more details about the measure topology (and some proofs) we refer
to the appendix of \cite{BNT06}. A more complete account on this topic
can be found in \cite{te}.

\subsection{Free infinite divisibility and the free cumulant
  transform}\label{subsec:Free_ID}

If $a$ and $b$ are two freely independent selfadjoint operators
affiliated with $\CM$, then the free convolution
$\spL\{a\}\boxplus\spL\{b\}$ of their spectral distributions is
defined as the spectral distribution of the sum $a+b$. Since 
$\spL\{a+b\}$ is uniquely determined by $\spL\{a\},\spL\{b\}$ and the
condition \eqref{prel_eq1}, and since any pair $(\mu,\nu)$ of
Borel-probability measures on $\R$ may be realized as the spectral
distributions of two freely independent selfadjoint operators affiliated with
some $W^*$-probability space, the operation $\boxplus$ is a
well-defined binary operation on the class $\CP(\R)$ of all (Borel-) probability
measures on $\R$ (see \cite{bv} for details). The corresponding
class of infinitely divisible probability laws is denoted by $\ID(\boxplus)$. Thus a 
measure $\nu$ from $\CP(\R)$ belongs to $\ID(\boxplus)$, if and only
if
\[
\forall n\in\N \ \exists \nu_{1/n}\in\CP(\R)\colon
\nu=\nu_{1/n}\boxplus\nu_{1/n}\boxplus
\cdots\boxplus\nu_{1/n} \quad(\text{$n$ terms}).
\]
The class of infinitely divisible probability laws with respect to
classical convolution $*$ of probability measures is correspondingly
denoted by $\ID(*)$.

As in classical probability, free infinite divisibility is generally
studied through a L\'evy-Khintchine type representation of the free
analog of the Fourier transform; the so called free
cumulant transform. Specifically the free cumulant transform of a
measure $\nu$ from $\ID(\boxplus)$ is defined by the formula:
\[
\CC_\nu(z)=zG_\nu^{\brinv}(z)-1, 
\]
where $G_\nu^{\brinv}$ denotes the inverse of the Cauchy transform $G_\nu$
given by
\[
G_{\nu}(z)=\int_{\R}\frac{1}{z-x}\,\nu(\d x), \qquad(z\in\C^+).
\]
This inverse (and hence $\CC_\nu$) is always well-defined in a region
(depending on $\nu$) of
the lower half complex plane $\C^-$ in the form:
\[
\CD_\nu=\big\{z\in\C^-\bigm| |z|<\delta \ \text{and} \
\Arg(z)\in(-\tfrac{\pi}{2}-\epsilon,-\tfrac{\pi}{2}+\epsilon)\big\}.
\]
For a selfadjoint operator $a$ affiliated with a
  $W^*$-probability space $(\CM,\tau)$ 
we shall also use the notation $\CC_a$ for the free cumulant transform
$\CC_{\spL\{a\}}$ of the spectral distribution of $a$.
The key property of the free cumulant transform is that it linearizes
free additive convolution in the sense that
\begin{equation}
\CC_{\nu\boxplus\nu'}(z)=\CC_{\nu}(z)+\CC_{\nu'}(z)
\label{Linearizing_Property_I}
\end{equation}
for any probability measures $\nu,\nu'$ on $\R$.

In case $\nu$ has finite $p$-th moment for some $p$ in $\N$, $\CC_\nu$
admits a Taylor expansion centered at 0 in the form:
\begin{equation}
\CC_{\nu}(z)=\sum_{j=1}^p\kappa_j(\nu)z^{j}+o(z^p),
\label{Taylor_Expansion}
\end{equation}
where the coefficients $\kappa_1(\nu),\ldots,\kappa_p(\nu)$ are the
\emph{free cumulants} of $\nu$ (see \cite[Theorem~1.3]{BG}). These
were introduced by Speicher via the \emph{moment-cumulant
formula}:
\begin{equation}
\kappa_p(\nu)=\int_{\R}t^p\,\nu(\d t) - 
\sum_{\pi\in\textrm{NC}'(p)}\prod_{V\in\rm{BL(\pi)}}\kappa_{\# V}(\nu),
\label{Moment_Cumulant_Formula}
\end{equation}
from which the free cumulants are defined recursively.
In \eqref{Moment_Cumulant_Formula} $\textrm{NC}'(p)$ is the set of
non-crossings partitions of 
$\{1,\ldots,p\}$ with at least two blocks. For such a partition
$\pi$, $\textrm{BL}(\pi)$ denotes the family of blocks of $\pi$, while,
for $V$ in $\textrm{BL}(\pi)$, $\#V$ denotes the cardinality of the corresponding
subset of $\{1,\ldots,p\}$ (see \cite[Chapter~2]{MiSp} for
details). In accordance with \eqref{Linearizing_Property_I} and
\eqref{Taylor_Expansion} the free cumulants linearize free additive
convolution in the sense that
\begin{equation}
\kappa_j(\nu\boxplus\nu')=\kappa_j(\nu)+\kappa_j(\nu'), \qquad(j=1,\ldots,p),
\label{Linearizing_Property_II}
\end{equation}
whenever $\nu,\nu'$ both have finite $p$-th moment.

A measure $\nu$ from $\CP(\R)$ is in $\ID(\boxplus)$, if and only if
$\CC_\nu$ has the \emph{free L\'evy-Khintchine representation}:
\[
\CC_\nu(z)=az+bz^2+\int_{\R}\Big(\frac{1}{1-tz}-1-z\varsigma(t)\Big)\,r(\d
t),
\qquad(z\in\C^-),
\]
where $a\in\R$, $b\in[0,\infty)$, $r$ is a L\'evy measure on $\R$ and
$\varsigma$ is the function given by\footnote{
To emphasize the analogy to the theory developed by
Rajput and Rosinski, we have chosen to work throughout with
the same centering function $\varsigma$ as the one used in \cite{RR},
one of the advantages of which is continuity.}
\[
\varsigma(t)=-1_{(-\infty,1)}(t)+t1_{[-1,1]}(t)+1_{(1,\infty)}(t), \qquad(t\in\R).
\]
The triplet $(a,b,r)$ is uniquely determined and is referred to as the
free characteristic triplet of $\nu$. 
Recall in comparison that a measure $\mu$ from $\CP(\R)$ belongs to $\ID(*)$ if and
only if its Fourier transform $\hat{\mu}$ has the
L\'evy-Khintchine representation:
\[
\hat{\mu}(y)=\exp\Big(\ci ay-\tfrac{1}{2}bt^2+\int_{\R}\big(\e^{\ci ty}-1-\ci
y\varsigma(t)\big) r(\d t)\Big), \qquad(y\in\R),
\]
where the parameters $(a,b,r)$ are exactly as above, uniquely determined
by $\mu$ and referred to as the (classical) characteristic triplet of $\mu$. 

From the two L\'evy-Khintchine representations above, it is apparent
that there is a one-to-one correspondence $\Lambda$ from $\ID(*)$ onto
$\ID(\boxplus)$. Specifically $\Lambda$ maps the probability measure
in $\ID(*)$ with classical characteristic triplet $(a,b,r)$ onto the
probability measure in $\ID(\boxplus)$ with free characteristic
triplet $(a,b,r)$. Although $\Lambda$ may appear as a rather formal
correspondence, it has the following fundamental properties for all
$\mu_1,\mu_2$ in $\ID(*)$ and all $c$ in $\R$:

\begin{enumerate}[i]

\item\label{Lambda_egnsk(i)}
$\Lambda(\mu_1*\mu_2)=\Lambda(\mu_1)\boxplus\Lambda(\mu_2)$,

\item\label{Lambda_egnsk(ii)}
$\Lambda(D_c\mu)=D_c\Lambda(\mu)$,

\item\label{Lambda_egnsk(iii)} 
$\Lambda(\delta_c)=\delta_c$

\item\label{Lambda_egnsk(iv)} 
$\Lambda$ is a homeomorphism with respect to weak convergence.

\end{enumerate}
In \ref{Lambda_egnsk(ii)} and in the following we use the notation
$D_c\mu$ for the scaling of $\mu$ by the constant $c$, i.e.\
$D_c\mu(B)=\mu(c^{-1}B)$, if $c\ne0$, while $D_0\mu=\delta_0$.
Here, as in \ref{Lambda_egnsk(iii)}, $\delta_c$ denotes the Dirac
measure at $c$.

The probability laws appearing in the 
free analogs of the Central Limit  Theorem and the Poisson Limit
Theorem are the semi-circle distributions:
\[
\gamma_{c,\ell}(\d t)=\frac{2}{\pi
  \ell^2}\sqrt{\ell^2-(t-c)^2}1_{[c-\ell,c+\ell]}(t)\6t, \qquad(\ell>0, \ c\in\R),
\]
and, respectively, the Marchenko-Pastur distributions
\begin{equation*}
\text{mp}_\ell(\d t)=
\begin{cases}
(1-\ell)\delta_0+\frac{1}{2\pi t}\sqrt{(t-s)(u-t)}1_{[s,u]}(t)\6t,
&\text{if $\ell\in(0,1)$,}
\\
\frac{1}{2\pi t}\sqrt{(t-s)(u-t)}1_{[s,u]}(t)\6t, &\text{if $\ell\in[1,\infty)$,}
\end{cases}
\end{equation*}
where $s=(1-\ell)^2$ and $u=(1+\ell)^2$. 
Correspondingly the mapping $\Lambda$ maps Gaussian distributions to
semi-circular distributions and Poisson distributions onto
Marchenko-Pastur distributions. The latter are also referred to as
free Poisson distributions.

It will prove important for us to express weak convergence of probability
measures in $\ID(\boxplus)$ in terms of the free characteristic
triplets in analogy with e.g.\ the classical result
\cite[Theorem~8.7]{Sato99}.

\begin{theorem}\label{thm:konv_vha_kar_triplet} 
Let $\nu,\nu_1,\nu_2,\nu_3,\ldots$ be probability
  measures from $\ID(\boxplus)$ with free characteristic triplets,
  respectively $(a,b,r), (a_1,b_1,r_1),(a_2,b_2,r_2),\ldots$. Then
  $\nu_n\to\nu$ weakly as $n\to\infty$, if and only if the following
  conditions are satisfied:

\begin{enumerate}[i]

\item $a_n\to a$ as $n\to\infty$.

\item $\int_{\R}f(t)\,r_n(\d t)\to\int_{\R}f(t)\,r(\d t)$ as
  $n\to\infty$ for any continuous bounded function $f\colon\R\to\R$
  vanishing in a neighborhood of 0.

\item $\displaystyle{
\lim_{\epsilon\downarrow0}\Big(\limsup_{n\to\infty}\Big|b_n-b
+\int_{[-\epsilon,\epsilon]}t^2\,r_n(\d t)\Big|\Big)=0.
}$
\end{enumerate}
\end{theorem}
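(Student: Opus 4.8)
The plan is to reduce the statement entirely to its classical counterpart by means of the Bercovici--Pata bijection $\Lambda$. The crucial structural fact is that $\Lambda$ sends a measure in $\ID(*)$ with \emph{classical} characteristic triplet $(a,b,r)$ to the measure in $\ID(\boxplus)$ with the \emph{same} triplet $(a,b,r)$, now read as a free characteristic triplet, and that $\Lambda$ is a homeomorphism with respect to weak convergence (property \ref{Lambda_egnsk(iv)}). Since conditions (i)--(iii) are formulated solely in terms of the triplets, which $\Lambda$ leaves unchanged, the free convergence criterion should be inherited verbatim from the classical one, and no direct analysis of the free cumulant transforms $\CC_{\nu_n}$ on the domains $\CD_{\nu_n}$ is needed.

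Concretely, I would put $\mu=\Lambda^{-1}(\nu)$ and $\mu_n=\Lambda^{-1}(\nu_n)$ for each $n\in\N$. By the defining property of $\Lambda$ these measures lie in $\ID(*)$ and have classical characteristic triplets $(a,b,r)$ and $(a_n,b_n,r_n)$, i.e.\ precisely the triplets of $\nu$ and $\nu_n$. Property \ref{Lambda_egnsk(iv)} then yields that $\nu_n\to\nu$ weakly if and only if $\mu_n\to\mu$ weakly. The theorem is therefore equivalent to the assertion that, for these classical infinitely divisible laws, weak convergence $\mu_n\to\mu$ holds if and only if (i)--(iii) are satisfied, and this is the content of the classical convergence theorem for infinitely divisible distributions, \cite[Theorem~8.7]{Sato99}.

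The one point requiring genuine care --- and the step I expect to be the main, if routine, obstacle --- is a bookkeeping reconciliation of centering conventions. The classical theorem \cite[Theorem~8.7]{Sato99} is customarily phrased relative to a fixed truncation, whereas the triplets above are normalized by the continuous centering function $\varsigma$ used here (cf.\ the footnote to the free L\'evy--Khintchine representation). Conditions (ii) and (iii) are insensitive to this choice, since (ii) tests only against continuous bounded functions vanishing near $0$, while (iii) isolates the joint contribution of the Gaussian part and of the small jumps near the origin; both are unaffected by modifying the centering away from $0$. For the drift condition (i) one notes that replacing one continuous centering $c$ by another continuous centering $c'$ shifts each drift parameter by $\int_{\R}(c'-c)\,\d r$, and that $c'-c$ is bounded, continuous, and vanishing in a neighborhood of $0$; hence condition (ii) forces the corrections for $r_n$ to converge to that for $r$, so convergence of the drifts is preserved under any change of continuous centering. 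It remains only to check that \cite[Theorem~8.7]{Sato99}, as stated relative to its own truncation, is equivalent to the version normalized by $\varsigma$ --- a standard verification, facilitated precisely by the continuity of $\varsigma$. Combining this normalization check with the $\Lambda$-reduction completes the proof.
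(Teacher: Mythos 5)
Your proposal is correct and follows essentially the same route as the paper, which deduces the theorem in one line by combining the classical result \cite[Theorem~8.7]{Sato99} with the fact that $\Lambda$ preserves characteristic triplets and is a homeomorphism for weak convergence. Your additional care about reconciling the centering function $\varsigma$ with Sato's truncation convention is a sensible elaboration of a detail the paper leaves implicit, not a different method.
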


Theorem~\ref{thm:konv_vha_kar_triplet} follows immediately by
combining the corresponding classical result
(\cite[Theorem~8.7]{Sato99}) with the fact that $\Lambda$ is a
homeomorphism.

Another useful characterization of weak convergence for measures in
$\ID(\boxplus)$ is the following:

 \begin{proposition}\label{prop:svag_konv_vha_fri_kum_tranf}
Let $\nu,\nu_1,\nu_2,\nu_3,\ldots$ be probability
  measures from $\ID(\boxplus)$. Then the following two conditions are
  equivalent:

\begin{enumerate}[i]

\item\label{prop:svag_konv_vha_fri_kum_tranf(i)}
$\nu_n\overset{\rm w}{\to}\nu$ as $n\to\infty$.

\item\label{prop:svag_konv_vha_fri_kum_tranf(ii)}
$
\lim_{n\to\infty}\CC_{\nu_n}(\ci y)=\CC_\nu(\ci y)
$
for all $y$ in $(-\infty,0)$.

\end{enumerate}
\end{proposition}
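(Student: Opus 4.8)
The plan is to route both conditions through the free characteristic triplets and to lean on Theorem~\ref{thm:konv_vha_kar_triplet}. That theorem already renders weak convergence, i.e.\ condition~\ref{prop:svag_konv_vha_fri_kum_tranf(i)}, equivalent to its three triplet conditions, so it suffices to show that condition~\ref{prop:svag_konv_vha_fri_kum_tranf(ii)} is equivalent to the same three conditions on the triplets $(a_n,b_n,r_n)$ and $(a,b,r)$. To prepare, I would insert $z=\ci y$ with $y<0$ into the free L\'evy--Khintchine representation and separate real and imaginary parts: writing $g_y(t)=\frac{1}{1-\ci ty}-1-\ci y\varsigma(t)$, one finds
\[
\re\CC_{\nu_n}(\ci y)=-b_ny^2-\int_{\R}\frac{t^2y^2}{1+t^2y^2}\,r_n(\d t),\qquad \frac{1}{y}\im\CC_{\nu_n}(\ci y)=a_n+\int_{\R}\Big(\frac{t}{1+t^2y^2}-\varsigma(t)\Big)r_n(\d t),
\]
and, since $\varsigma(t)=t$ near $0$, the expansion $g_y(t)=-y^2t^2+O(|t|^3)$ as $t\to0$.

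The implication \ref{prop:svag_konv_vha_fri_kum_tranf(i)}$\Rightarrow$\ref{prop:svag_konv_vha_fri_kum_tranf(ii)}, which by the above reduction is the statement that triplet convergence forces $\CC_{\nu_n}(\ci y)\to\CC_\nu(\ci y)$, I would establish for each fixed $y<0$ by the standard three-region estimate of continuity theorems. The drift $\ci a_n y$ converges by condition~(i) of Theorem~\ref{thm:konv_vha_kar_triplet}. For the L\'evy integral I cut at $\pm\epsilon$: on $\{|t|>\epsilon\}$ the integrand $g_y$ is continuous and bounded and, after multiplying by a cut-off vanishing near $0$, is covered by condition~(ii); on $\{|t|\le\epsilon\}$ the expansion $g_y(t)=-y^2t^2+O(|t|^3)$ lets me replace $\int_{[-\epsilon,\epsilon]}g_y\,r_n$ by $-y^2\int_{[-\epsilon,\epsilon]}t^2\,r_n$ up to an error of order $\epsilon\int_{[-\epsilon,\epsilon]}t^2\,r_n$, which I combine with $-b_ny^2$. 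Condition~(iii) governs $b_n+\int_{[-\epsilon,\epsilon]}t^2\,r_n$ in the iterated limit, so letting $n\to\infty$ and then $\epsilon\downarrow0$ closes the estimate.

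The reverse implication \ref{prop:svag_konv_vha_fri_kum_tranf(ii)}$\Rightarrow$\ref{prop:svag_konv_vha_fri_kum_tranf(i)} is the crux and the expected main obstacle, as it amounts to reconstructing the triplet from the boundary values of $\CC_{\nu_n}$ on the negative imaginary axis --- the free counterpart of the hard half of L\'evy's continuity theorem. My plan is to read the real part as a transform of finite measures. Setting $\tilde r_n(\d t)=\tfrac{t^2}{1+t^2}\,r_n(\d t)$ and substituting $v=t^2/(1+t^2)\in[0,1]$ turns the first display into
\[
-\frac{\re\CC_{\nu_n}(\ci y)}{y^2}=\int_{[0,1]}\frac{1}{1+(y^2-1)v}\,\sigma_n(\d v),\qquad \sigma_n=b_n\delta_0+(v)_*\tilde r_n,
\]
a family of finite measures on the compact space $[0,1]$ with positive bounded kernels (note $1+(y^2-1)v\ge\min(1,y^2)>0$). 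Their total masses $\sigma_n([0,1])=-\re\CC_{\nu_n}(-\ci)$ converge by~\ref{prop:svag_konv_vha_fri_kum_tranf(ii)} and are hence bounded, and the kernels $\{(1+(y^2-1)v)^{-1}:y<0\}$ separate points of $[0,1]$, contain the constants and are closed under multiplication, so by Stone--Weierstrass their uniform closure is $C([0,1])$. Pointwise convergence of the transforms for all $y<0$ together with the bounded total mass therefore upgrades to weak convergence $\sigma_n\to\sigma=b\delta_0+(v)_*\tilde r$ on $[0,1]$. Unwinding the substitution recovers condition~(ii) of Theorem~\ref{thm:konv_vha_kar_triplet} (a test function vanishing near $0$ pulls back to some $f\in C([0,1])$ with $f(0)=0$) and condition~(iii) (the masses $\sigma_n([0,\eta])\to\sigma([0,\eta])\downarrow b$ at continuity points $\eta\downarrow0$ reproduce the merged quantity $b_n+\int_{[-\epsilon,\epsilon]}t^2\,r_n$). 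With (ii) and (iii) secured, the integral in the second display converges, so condition~(i), $a_n\to a$, follows as well. The delicate feature --- and the reason the argument must be organized around the single measure $\sigma_n$ rather than around $b_n$ and $r_n$ separately --- is that $b_n$ need not converge on its own; only the combination appearing in condition~(iii) is controlled.
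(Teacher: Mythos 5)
Your reduction to Theorem~\ref{thm:konv_vha_kar_triplet} is a legitimate strategy (it is the route the paper explicitly mentions and then deliberately bypasses: the paper instead feeds $y\mapsto\exp(\CC_{\nu_n}(\ci y))$ into the $\Upsilon$-transformation of \cite{BNT04}, \cite{BNRT08} and reduces \ref{prop:svag_konv_vha_fri_kum_tranf(ii)}$\Rightarrow$\ref{prop:svag_konv_vha_fri_kum_tranf(i)} to the classical continuity theorem for Fourier transforms together with the homeomorphism properties of $\Upsilon$ and $\Lambda$). Your formulas for $\re\CC_{\nu_n}(\ci y)$ and $\im\CC_{\nu_n}(\ci y)$ and your three-region proof of \ref{prop:svag_konv_vha_fri_kum_tranf(i)}$\Rightarrow$\ref{prop:svag_konv_vha_fri_kum_tranf(ii)} are correct.

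The gap is in the reverse implication. Your measure $\sigma_n=b_n\delta_0+(v)_*\tilde r_n$ is the pushforward of $\tfrac{t^2}{1+t^2}\,r_n(\d t)$ under the \emph{two-to-one} map $t\mapsto t^2/(1+t^2)$, so it retains only the even part of $r_n$. A continuous bounded $f$ vanishing near $0$ satisfies $f(t)\tfrac{1+t^2}{t^2}=g\big(t^2/(1+t^2)\big)$ for some $g$ on $[0,1]$ only when $f$ is even; hence weak convergence of $\sigma_n$ cannot deliver condition (ii) of Theorem~\ref{thm:konv_vha_kar_triplet} for general test functions, and your parenthetical claim that ``a test function vanishing near $0$ pulls back to some $f\in C([0,1])$'' is false for non-even $f$. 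Concretely, for the triplets $(0,0,\delta_{(-1)^n})$ the quantities $\re\CC_{\nu_n}(\ci y)$ do not depend on $n$ at all, while $\int f\,\d r_n$ alternates for odd $f$: the distinguishing information sits entirely in $\im\CC_{\nu_n}(\ci y)$, which your argument invokes only at the very end, to get $a_n\to a$ \emph{after} conditions (ii) and (iii) are ``secured'' --- but they are not secured by the real part alone. To close the gap you must run a second compactness/identification argument on the odd part, e.g.\ on the differences $\tfrac{1}{y}\im\CC_{\nu_n}(\ci y)-\tfrac{1}{y'}\im\CC_{\nu_n}(\ci y')=\int_{\R} t\big(\tfrac{1}{1+t^2y^2}-\tfrac{1}{1+t^2y'^2}\big)\,r_n(\d t)$, which are integrals of odd kernels against $\tilde r_n$, and only then extract $a_n\to a$. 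A smaller inaccuracy: the kernel family $\{(1+(y^2-1)v)^{-1}\}$ is \emph{not} closed under multiplication, so Stone--Weierstrass does not apply to it directly; density of its closed linear span in $C([0,1])$ does hold, but via the partial-fraction identity $\tfrac{1}{(v+s)(v+s')}=\tfrac{1}{s'-s}\big(\tfrac{1}{v+s}-\tfrac{1}{v+s'}\big)$, which shows the closed span is an algebra.
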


For general probability measures (outside $\ID(\boxplus)$) the
condition in Proposition~\ref{prop:svag_konv_vha_fri_kum_tranf} needs
to be supplemented by the condition:
\[
\sup_{n\in\N}\big|\CC_{\nu_n}(\ci y)\big|\longrightarrow0,
  \quad\text{as $y\uparrow0$,}
\]
in order to ensure weak convergence. 
Proposition~\ref{prop:svag_konv_vha_fri_kum_tranf} may be
established as a consequence of Theorem~\ref{thm:konv_vha_kar_triplet}
and the corresponding classical result. We provide here, for the
reader's convenience,  a short proof which bypasses arithmetics
with characteristic triplets. 

\begin{proofof}[Proof of
  Proposition~\ref{prop:svag_konv_vha_fri_kum_tranf}.]

As indicated, it is well-known that
\ref{prop:svag_konv_vha_fri_kum_tranf(i)} implies
\ref{prop:svag_konv_vha_fri_kum_tranf(ii)}.
For the converse implication we recall from
\cite{BNT04} that the formula:
\[
\widehat{\Upsilon(\mu)}(y)=\exp(\CC_{\Lambda(\mu)}(\ci y))
\qquad(\mu\in\ID(*), \ y\in(-\infty,0)),
\]
defines an injective mapping $\Upsilon\colon\ID(*)\to\ID(*)$, which is
a homeomorphism onto its range with respect to weak convergence (see
\cite{BNRT08}). Putting $\mu=\Lambda^{-1}(\nu)$ and
$\mu_n=\Lambda^{-1}(\nu_n)$, the assumption of the proposition implies
that
$
\lim_{n\to\infty}\widehat{\Upsilon(\mu_n)}(y)=\widehat{\Upsilon(\mu)}(y)
$
for all $y$ in $(-\infty,0)$, and by complex conjugation this also
holds for all positive $y$. Hence by the continuity theorem for Fourier transforms
we have that $\Upsilon(\mu_n)\overset{\rm w}{\to}\Upsilon(\mu)$ as
$n\to\infty$, and since $\Upsilon$ is a homeomorphism, this means that
$\mu_n\overset{\rm w}{\to}\mu$ as $n\to\infty$. Since $\Lambda$ is
continuous with respect to weak convergence, this further implies that
$\nu_n\overset{\rm w}{\to}\nu$ as $n\to\infty$, as desired.
\end{proofof}

\section{Free Completely Random Measures}
\label{sec:FCRM}

\begin{definition}\label{def_FLB}
Let $X$ be a non-empty set, and let $\CE$ be a ring of subsets of
$X$. Then a free L\'evy basis (FLB) on $(X,\CE)$ is a family
$M=\{M(E)\mid E\in\CE\}$ of selfadjoint operators, affiliated with some
$W^*$-probability space $(\CM,\tau)$, satisfying the following four
conditions:

\begin{enumerate}[a]

\item\label{def_FLB(b)} 
If $E_1,\ldots,E_n$ are disjoint sets from $\CE$, then
  $M(E_1),\ldots,M(E_n)$ are freely independent with respect to
  $\tau$.

\item\label{def_FLB(c)} 
If $E_1,\ldots,E_n$ are disjoint sets from $\CE$, then
$M(\medcup_{j=1}^nE_j)=M(E_1)+\cdots+M(E_n)$.

\item\label{def_FLB(d)} 
$\spL\{M(E_n)\}\overset{\rm w}{\to}\delta_0$ 
as $n\to\infty$ for any decreasing sequence of
  sets $E_n$ from $\CE$, satisfying that
  $\medcap_{n\in\N}E_n=\emptyset$.

\item\label{def_FLB(a)}
$\spL\{M(E)\}\in\ID(\boxplus)$ for all $E$ in $\CE$.

\end{enumerate}

If $M$ satisfies only conditions \ref{def_FLB(b)}-\ref{def_FLB(c)}
above, it is referred to as a finitely additive free random
measure (FAFRM). If $M$ satisfies conditions \ref{def_FLB(b)}-\ref{def_FLB(d)}
it is termed a free completely random measure (FCRM).
\end{definition}

\begin{remarks}\label{rem:Equiv_def_of_FLB}
\begin{lenumerate}[1]

\item\label{rem:Equiv_def_of_FLB(A)}
From Definition~\ref{def_FLB} it is immediate to check that if
  $t\in\R$ and $M^1$ and $M^2$ are two freely independent free L\'evy
  bases (respectively FAFRMs or FCRMs) then $tM^1+M^2$ is again a free
  L\'evy basis (respectively FAFRM or FCRM).

\item\label{rem:Equiv_def_of_FLB(B)}
In full analogy with the classical theory of random measures we note
that
if $M=\{M(E)\mid E\in\CE\}$ is a FAFRM then the remaining condition
\ref{def_FLB(d)} in Definition~\ref{def_FLB} in order for $M$ to be a FCRM
is equivalent to the requirement that
\begin{equation}
\sum_{k=1}^nM(E_n)\xrightarrow[n\to\infty]{} M\big(\medcup_{n\in\N}E_n\big)
\quad\text{in the measure topology}
\label{Equiv_def_of_FLB_eq1}
\end{equation}
for any sequence $(E_n)_{n\in\N}$ of disjoint sets from $\CE$,
satisfying that $E:=\medcup_{n\in\N}E_n\in\CE$. Indeed, for such a
sequence $(E_n)_{n\in\N}$ condition \ref{def_FLB(d)} entails that
\begin{equation*}
\spL\big\{M\big(E\setminus(\tmedcup_{j=1}^nE_j)\big)\big\}
\overset{\rm w}{\longrightarrow}\delta_0,
\quad\text{as $n\to\infty$}, 
\end{equation*}
and hence by property \ref{def_FLB(c)} we obtain for any positive
$\epsilon$ that
\begin{equation*}
\begin{split}
\tau\Big[1_{\R\setminus[-\epsilon,\epsilon]}\big(M(E)-\tsum_{j=1}^nM(E_j)\big)\Big]
&=\tau\Big[1_{\R\setminus[-\epsilon,\epsilon]}
\big(M(E\setminus(\tmedcup_{j=1}^nE_j))\big)\Big]
\\[.2cm]
&=\int_{\R}1_{\R\setminus[-\epsilon,\epsilon]}\,
\spL\big\{M(E\setminus(\tmedcup_{j=1}^nE_j))\big\}(\d t)
\\[.2cm]
&\xrightarrow[n\to\infty]{}
\delta_0(\R\setminus[-\epsilon,\epsilon])=0,
\end{split}
\end{equation*}
which means that $\sum_{j=1}^nM(E_j)\to M(\medcup_{n\in\N}E_n)$ in the
measure topology. Similar argumentation yields that condition
\eqref{Equiv_def_of_FLB_eq1} implies condition \ref{def_FLB(d)} of
Definition~\ref{def_FLB}.

\end{lenumerate}
\end{remarks}

Concerning existence of the various types of free random measures
we have the following main result:

\begin{theorem}\label{existence_FLB}
Let $\CE$ be a $\delta$-ring in a non-empty set $X$, and for each
$E$ in $\CE$ let $\nu(E,\cdot)$ be a Borel
probability measure on $\R$.
Assume that whenever $E_1,\ldots,E_n$ are disjoint
sets from $\CE$ we have that
\begin{equation}
\nu(\textstyle{\medcup_{j=1}^nE_j},\cdot)
=\nu(E_1,\cdot)\boxplus\cdots\boxplus\nu(E_n,\cdot).
\label{key_existence_eq}
\end{equation}
Then the following assertions hold:

\begin{enumerate}[i]

\item\label{existence_FLB(a)}
There exists a $W^*$-probability space $(\CM,\tau)$ and a FAFRM
$M=\{M(E)\mid E\in\CE\}$ affiliated
with $(\CM,\tau)$, such that $\spL\{M(E)\}=\nu(E,\cdot)$ for all $E$ in
$\CE$.

\item\label{existence_FLB(b)}
If the given family $\{\nu(E,\cdot)\mid E\in\CE\}$ satisfies that
$\nu(E_n,\cdot)\overset{\rm w}{\to}\delta_0$ as $n\to\infty$ for any
sequence $(E_n)_{n\in\N}$ of sets from $\CE$, such that
$E_n\downarrow\emptyset$, then $M$ described in \ref{existence_FLB(a)} 
is automatically a FCRM.

\item\label{existence_FLB(c)}
If the given family $\{\nu(E,\cdot)\mid E\in\CE\}$ satisfies, in
addition to the condition in \ref{existence_FLB(b)}, that
$\nu(E_n,\cdot)\in\ID(\boxplus)$ for all $E$ in $\CE$, then $M$
described in \ref{existence_FLB(a)} is automatically a FLB. 
\end{enumerate}
\end{theorem}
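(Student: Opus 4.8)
The plan is to construct the FAFRM directly as a family of selfadjoint operators living in a free product of von Neumann algebras, exploiting the additivity assumption \eqref{key_existence_eq} to organize the free convolution structure. The essential difficulty is that $\CE$ is only a $\delta$-ring, so one cannot simply index the construction by points or by a partition of a fixed ambient set; instead I would build, for each finite collection of disjoint ``atoms,'' a free product realizing the prescribed spectral distributions, and then verify that these local constructions are compatible. Concretely, for a single set $E\in\CE$ I first realize $\nu(E,\cdot)$ as the spectral distribution of a selfadjoint operator $a_E$ affiliated with some $W^*$-probability space $(\CM_E,\tau_E)$ (this is standard: take the multiplication operator by the identity on $L^2(\R,\nu(E,\cdot))$, or invoke the GNS construction). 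The free product $(\CM,\tau)=\cc_{i}(\CM_{i},\tau_{i})$ of such building blocks furnishes freely independent selfadjoint operators with the required marginals, and the strong sum in $\olM$ supplies the additivity.

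The key steps, in order, would be as follows. First I would fix a countable ``skeleton'' for $\CE$: using \eqref{eq_determining_sequence}-type reasoning (or directly the structure of $\CE$), reduce to building $M$ on a suitable sub-collection and then extend by additivity. Second, for a finite family of pairwise disjoint sets $E_1,\dots,E_n\in\CE$ I would take freely independent operators $a_{E_1},\dots,a_{E_n}$ affiliated with a common $(\CM,\tau)$ with $\spL\{a_{E_j}\}=\nu(E_j,\cdot)$, and set $M(\medcup_{j}E_j)=a_{E_1}+\cdots+a_{E_n}$ (strong sum). By the linearizing property and the definition of $\boxplus$, the identity \eqref{key_existence_eq} guarantees $\spL\{M(\medcup_j E_j)\}=\nu(\medcup_j E_j,\cdot)$, so the assignment is consistent. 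Third, I would promote these finite constructions to a single $W^*$-probability space: an inductive-limit or infinite-free-product argument over an increasing exhaustion of the relevant subsets of $\CE$, with the operators assembled inside $\olM$. This yields \ref{existence_FLB(a)}: $M$ is additive by construction (property \ref{def_FLB(c)}) and freely independent on disjoint sets (property \ref{def_FLB(b)}).

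For parts \ref{existence_FLB(b)} and \ref{existence_FLB(c)} the work is lighter, since they are just verifications that the hypotheses on the family $\{\nu(E,\cdot)\}$ transfer to $M$. For \ref{existence_FLB(b)}, given $E_n\downarrow\emptyset$, the assumption is precisely $\nu(E_n,\cdot)\wto\delta_0$, and since $\spL\{M(E_n)\}=\nu(E_n,\cdot)$ this is exactly condition \ref{def_FLB(d)}; hence $M$ is a FCRM. For \ref{existence_FLB(c)}, if additionally each $\nu(E,\cdot)\in\ID(\boxplus)$, then $\spL\{M(E)\}\in\ID(\boxplus)$ for all $E$, which is condition \ref{def_FLB(a)}, so $M$ is a FLB.

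I expect the main obstacle to be the third step: constructing a single ambient $(\CM,\tau)$ in which all the operators $\{M(E)\mid E\in\CE\}$ simultaneously live and are affiliated, while retaining free independence on disjoint sets and the correct strong-sum additivity. This is where the machinery of free products of von Neumann algebras and affiliated (unbounded) operators, together with the completeness of the measure topology on $\olM$, must be deployed carefully — in particular to control the strong sums/closures and to pass from the finitely-additive compatible system to a genuine family indexed by the full $\delta$-ring. This is precisely why (as the introduction notes) the authors defer the detailed construction to Section~\ref{sec:Proof_Existence_FLB}, where free products and the theory of operators affiliated with von Neumann algebras are the central tools.
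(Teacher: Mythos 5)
Your overall strategy is the one the paper follows: realize each $\nu(E,\cdot)$ on the abelian algebra $L^{\infty}(\nu(E,\cdot))$, form reduced free products over finite disjoint families, and glue these local models together by an inductive limit of $W^*$-probability spaces; your treatment of parts \ref{existence_FLB(b)} and \ref{existence_FLB(c)} as immediate consequences of \ref{existence_FLB(a)} is also exactly what the paper does. The gap is that the gluing step --- which is essentially the entire mathematical content of part \ref{existence_FLB(a)} --- is only named, not performed, and the one consistency check you do carry out is not the one that is needed. You verify that the \emph{marginal} distribution of the block sum $M_T(F_{j(i,1)})+\cdots+M_T(F_{j(i,l_i)})$ agrees with $\nu(E_i,\cdot)$ via \eqref{key_existence_eq}; but to obtain a normal trace-preserving embedding of one local free product $\CM_S$ into a finer one $\CM_T$ (which is what makes the system directed and the limit exist) you must match the \emph{joint} $*$-distribution of the whole family of block sums $\big(\sum_{r}M_T(F_{j(i,r)})\big)_{i=1,\ldots,k}$ with that of $(M_S(E_i))_{i=1,\ldots,k}$. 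This requires the further observation that sums over disjoint blocks of a freely independent family are again freely independent (via \cite[Proposition~2.5.5]{vdn}), after which an isomorphism theorem for von Neumann algebras with generating families of equal $*$-distribution produces the embedding. Without this, the ``compatibility of the local constructions'' that you flag has not actually been established.

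Two further points where your sketch would need repair. First, the directed set is not an ``increasing exhaustion of the relevant subsets of $\CE$'': since $\CE$ carries no countability hypothesis, the paper orders the finite collections $\{E_1,\ldots,E_k\}$ of disjoint nonempty sets by \emph{refinement} ($S\le T$ when every set of $S$ is a union of sets of $T$), checks that this order is upward filtering, verifies the cocycle identity $\iota_{R,T}=\iota_{S,T}\circ\iota_{R,S}$, and only then invokes an inductive limit in the category of $W^*$-probability spaces (built via a $C^*$-inductive limit followed by a GNS construction). Second, because the measures $\nu(E,\cdot)$ may have unbounded support, the operators are unbounded and the connecting $*$-homomorphisms must be extended to the algebras $\olCM_S$ of affiliated operators in a way that respects strong sums and spectral calculus; the paper devotes a separate appendix proposition to this, and it is needed even to make sense of the identity $\oli_{S,T}(M_S(E_i))=\sum_r M_T(F_{j(i,r)})$. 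Your appeal to ``completeness of the measure topology'' points in the right direction but does not by itself deliver these facts.
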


Note that condition \eqref{key_existence_eq} entails that
$\nu(\emptyset)=\delta_0$.
The proof of Theorem~\ref{existence_FLB} is obtained as the
culmination of a series of preliminary lemmas and is deferred to the concluding
section of the paper. In the remainder of the present  section we focus on
establishing, under various additional assumptions, a decomposition of a FCRM into
the sum of a FLB and a ``purely atomic part'', such that the two
terms in the decomposition are freely independent. In order to derive
these decompositions we shall need the following natural result.

\begin{lemma}\label{lem:disjunkt_opspaltning}
Let $\CE$ be a $\delta$-ring in a non-empty set $X$, and let
$M=\{M(E)\mid E\in\CE\}$
be a FAFRM (respectively FCRM or FLB) affiliated with a
$W^*$-probability space $(\CM,\tau)$. Let further $A$ be a fixed set
from $\sigma(\CE)$. Then the formulae
\[
M_1(E)=M(E\cap A), \qand M_2(E)=M(E\cap(X\setminus A)),
\qquad(E\in\CE),
\]
define new FAFRMs (respectively FCRMs or FLBs) on $(X,\CE)$, and $M_1$
and $M_2$ are freely independent in the sense that the $W^*$-algebras
generated by the families $\{M_1(E)\mid E\in\CE\}$ and $\{M_2(E)\mid
E\in\CE\}$ are freely independent.
\end{lemma}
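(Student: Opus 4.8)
The plan is to verify first that $M_1$ and $M_2$ satisfy the defining axioms, and then to establish their free independence, which is the substantial part. For well-definedness, note that since $A\in\sigma(\CE)$ and $E\in\CE$, the hereditary property \eqref{eq_Hereditary_prop_delta_ring} gives $E\cap A\in\CE$, and since $\CE$ is a ring also $E\cap(X\setminus A)=E\setminus(E\cap A)\in\CE$; hence $M_1(E)$ and $M_2(E)$ are legitimate operators affiliated with $(\CM,\tau)$. The four axioms \ref{def_FLB(b)}--\ref{def_FLB(a)} then transfer almost verbatim, each being stable under intersection with $A$: if $E_1,\dots,E_n\in\CE$ are disjoint so are $E_1\cap A,\dots,E_n\cap A$, whence \ref{def_FLB(b)} for $M_1$ follows from \ref{def_FLB(b)} for $M$, and $M_1(\medcup_jE_j)=M(\medcup_j(E_j\cap A))=\sum_jM(E_j\cap A)=\sum_jM_1(E_j)$ gives \ref{def_FLB(c)}; if $E_n\downarrow\emptyset$ then $E_n\cap A\downarrow\emptyset$, so $\spL\{M_1(E_n)\}=\spL\{M(E_n\cap A)\}\wto\delta_0$ gives \ref{def_FLB(d)}; and $\spL\{M_1(E)\}=\spL\{M(E\cap A)\}\in\ID(\boxplus)$ gives \ref{def_FLB(a)}. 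The identical argument with $X\setminus A$ in place of $A$ handles $M_2$.

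It remains to prove free independence. Writing $\CE_A=\{F\in\CE\mid F\subseteq A\}$ and $\CE_{A'}=\{G\in\CE\mid G\subseteq X\setminus A\}$, the hereditary property shows that $\{M_1(E)\mid E\in\CE\}=\{M(F)\mid F\in\CE_A\}$ and $\{M_2(E)\mid E\in\CE\}=\{M(G)\mid G\in\CE_{A'}\}$, so it suffices to prove that $W^*(\{M(F)\mid F\in\CE_A\})$ and $W^*(\{M(G)\mid G\in\CE_{A'}\})$ are freely independent. First I would reduce this to finite subfamilies: since $\tau$ is normal, the $*$-algebras algebraically generated by the two families are $\sigma$-weakly dense in the respective $W^*$-algebras, and free independence of the $W^*$-algebras is equivalent to that of these $*$-algebras. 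Any instance of the defining moment condition involves only finitely many generators, so it is enough to show, for arbitrary finite $F_1,\dots,F_k\in\CE_A$ and $G_1,\dots,G_l\in\CE_{A'}$, that $W^*(M(F_1),\dots,M(F_k))$ and $W^*(M(G_1),\dots,M(G_l))$ are freely independent.

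For such finite families I would pass to atoms. Let $B_1,\dots,B_m\in\CE$ be the (pairwise disjoint) atoms of the finite Boolean algebra generated inside $\CE$ by $F_1,\dots,F_k,G_1,\dots,G_l$, so that each $F_i$ and each $G_j$ is a finite disjoint union of atoms. Since $F_i\subseteq A$ while $G_j\subseteq X\setminus A$ and $A\cap(X\setminus A)=\emptyset$, every nonempty atom lies either in $A$ or in $X\setminus A$; call these the $A$-atoms and the $A'$-atoms. By additivity \ref{def_FLB(c)}, $M(F_i)$ is a sum of the $M(B_p)$ over $A$-atoms and $M(G_j)$ a sum of the $M(B_q)$ over $A'$-atoms, so
\[
W^*(M(F_1),\dots,M(F_k))\subseteq W^*(\{M(B_p)\mid B_p\ \text{an $A$-atom}\}),
\]
and symmetrically for the $G_j$'s. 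Now property \ref{def_FLB(b)} applied to the disjoint sets $B_1,\dots,B_m$ says that $M(B_1),\dots,M(B_m)$ are freely independent; by the grouping (associativity) property of free independence the $W^*$-algebra generated by the $A$-atom operators is then freely independent from the one generated by the $A'$-atom operators. As the two algebras above are sub-$W^*$-algebras of these, they inherit free independence, completing the finite case and hence the proof.

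The step I expect to be the main obstacle is precisely the one forcing the atom decomposition: pairwise free independence of $M(F)$ and $M(G)$ for a single disjoint pair is immediate from \ref{def_FLB(b)}, but free independence of the \emph{generated algebras} is a genuinely stronger, joint statement, and the sets within each family $\CE_A$ (resp.\ $\CE_{A'}$) need not be disjoint from one another. Resolving this requires realizing all the relevant operators inside the algebras generated by one common disjoint (hence free) family of atoms and then invoking the associativity of freeness; the accompanying reductions from infinite to finite families and from $*$-algebras to $W^*$-algebras are routine but should be stated carefully.
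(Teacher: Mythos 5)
Your proposal is correct and follows essentially the same route as the paper: verify the axioms pointwise, then reduce free independence of the generated $W^*$-algebras to finitely many generators, disjointify the underlying sets (the paper refines each family separately and observes that the pieces $F_i\cap A$ and $G_j\cap(X\setminus A)$ are jointly disjoint, which is equivalent to your atom decomposition), and conclude via condition \ref{def_FLB(b)} together with the grouping property of free independence. The affiliation argument you use to place $M(F_i)$ inside the $W^*$-algebra generated by the atom operators is exactly the step the paper makes as well.
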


\begin{proof}
Note first of all that \eqref{eq_Hereditary_prop_delta_ring} ensures
that $M_1$ and $M_2$ are well-defined. Secondly, it is straightforward
to check that $M_1$ and $M_2$ satisfy any of the conditions
\ref{def_FLB(b)}-\ref{def_FLB(a)} in Definition~\ref{def_FLB},
provided that $M$ satisfies that same condition. It remains therefore
to argue that $M_1$ and $M_2$ are freely independent as stated, and for this it
suffices to argue that the unital algebras $\CA_1$ and $\CA_2$ generated by $M_1$
and $M_2$, respectively, are freely independent (cf.\
\cite[Proposition~2.5.7]{vdn}). To 
validate the latter assertion it is sufficient to argue that any given
finite subsets $\{a_1,\ldots,a_n\}$ of $\CA_1$ and $\{b_1,\ldots,b_m\}$ of
$\CA_2$ generate freely independent unital
subalgebras of $\CA$. In this setup there exist finitely many sets
$E_1,\ldots,E_k$ from $\CE$ and functions
$f_1,\ldots,f_k$ from $\CBF_b(\R)$, such that each $a_j$ is a
(non-commutative) polynomial in (some of) the variables $f_j(M_1(E_j))$,
$j=1,\ldots,k$. Since $\CE$ is closed under intersections and
set-differences, we can subsequently choose finitely many disjoint sets
$F_1,\ldots,F_l$ from $\CE$, such that each $E_j$ is a union of some
of the $F_i$'s. Let $\CB_1$ denote the $W^*$-subalgebra of $\CA$
generated by $\{M_1(F_1),\ldots,M_1(F_l)\}$. Then each $M_1(E_j)$ is
affiliated with $\CB_1$, being the sum of some of the $M_1(F_i)$'s. In
particular $f_j(M_1(E_j))\in\CB_1$ for all $j$, and hence also
$a_1,\ldots,a_n\in\CB_1$. Similarly there exists a finite family
$G_1,\ldots,G_r$ of disjoint sets from $\CE$, such that
$b_1,\ldots,b_m\in\CB_2$, with $\CB_2$ being the $W^*$-subalgebra of
$\CA$ generated by $\{M_2(G_1),\ldots,M_2(G_r)\}$.
Now, by Definition~\ref{def_FLB}\ref{def_FLB(b)} and the definitions
of $M_1$ and $M_2$ the operators
\[
M_1(F_1),\ldots,M_1(F_l),M_2(G_1),\ldots,M_2(G_r)
\]
are freely independent, and hence $\CB_1$ and $\CB_2$ are also freely
independent (cf.\ \cite[Proposition~2.5.5]{vdn}). Obviously this
further entails that the unital algebras generated by
$\{a_1,\ldots,a_n\}$ and $\{b_1,\ldots,b_r\}$, respectively, are
freely independent as well.
\end{proof}

\subsection{Decomposition of a positive FCRM}
\label{subsec:decompI}

In this subsection we consider a non-empty set $X$ equipped
with a $\delta$-ring $\CE$ and a FCRM $M=\{M(E)\mid E\in\CE\}$
affiliated with a $W^*$-probability space $(\CM,\tau)$. We assume
throughout that $M(E)$ is positive for all $E$ in $\CE$ in the sense
that $\spe(M(E))\subseteq[0,\infty)$, or, equivalently, that
$\supp(\spL\{M(E)\})\subseteq[0,\infty)$ for all $E\in\CE$.
We shall derive a decomposition of $M$ into the sum of an ``atomic''
FCRM and a FLB, a kin to the fundamental decomposition obtained by
Kingman in \cite{Ki67} for classical completely random measures (CRM).
The latter was obtained via the Laplace transforms of the considered
CRM, which give rise to a positive measure on the underlying
measurable space. This approach cannot directly be transferred to the
non-commutative operator setting, since the formula
$\exp(A+B)=\exp(A)\exp(B)$ does not generally hold for selfadjoint
operators $A$ and $B$, unless they commute. 
Our construction given below therefore follows a different but related
path, where the mentioned Laplace transforms are replaced by the set function
\begin{equation}
\mu(E):=\int_0^{\infty}t\,\spL\{M(E)\}(\d t), \qquad(E\in\CE),
\label{dec_eq1}
\end{equation}
which we shall argue is a measure on $\CE$.
In case $M(E)$ is bounded (and hence an element of $\CM$)
it holds automatically that $\mu(E)<\infty$, since the
appearing integral equals $\tau(M(E))$. In general, when it is finite,
$\mu(E)$ may also be identified with
the first free cumulant of $\spL\{M(E)\}$
(cf.\ \eqref{Moment_Cumulant_Formula}), which we shall mostly denote
simply by $\kappa_1(M(E))$ rather than   
$\kappa_1(\spL\{M(E)\})$ to avoid too heavy notation.

\begin{lemma}\label{cumulant_measure_1}
In the setting described above the formula
\[
\mu(E)=\int_0^{\infty}t\, \spL\{M(E)\}(\d t),
\qquad(E\in\CE),
\]
defines a (positive) measure on $(X,\CE)$.
\end{lemma}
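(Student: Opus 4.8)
The plan is to identify $\mu$ with the (extended, possibly infinite) trace of the positive operator $M(E)$ and to read off the two defining properties of a positive measure---vanishing on $\emptyset$ and countable additivity---from corresponding properties of that trace. Write $\tau$ also for the canonical extension of the tracial state to the positive cone of operators affiliated with $\CM$, so that for positive $a$ one has $\tau(a)=\int_0^\infty t\,\spL\{a\}(\d t)\in[0,\infty]$. Since $M(E)$ is positive for every $E$, this gives $\mu(E)=\tau(M(E))\in[0,\infty]$, and the layer-cake (Fubini) identity
\[
\mu(E)=\int_0^\infty \tau\big(1_{(s,\infty)}(M(E))\big)\6s=\int_0^\infty\spL\{M(E)\}\big((s,\infty)\big)\6s
\]
will be the main computational tool. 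Applying finite additivity of $M$ (Definition~\ref{def_FLB}\ref{def_FLB(c)}) with $E_1=E_2=\emptyset$ forces $M(\emptyset)=0$, whence $\mu(\emptyset)=0$.

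First I would establish finite additivity. For disjoint $E_1,E_2\in\CE$ condition \ref{def_FLB(c)} gives $M(E_1\cup E_2)=M(E_1)+M(E_2)$ as a strong sum of two positive affiliated operators, and the extension of $\tau$ to the positive cone is additive there; hence $\mu(E_1\cup E_2)=\mu(E_1)+\mu(E_2)$, with equality holding in $[0,\infty]$ irrespective of finiteness. (Equivalently, when the moments are finite, $\mu(E)=\kappa_1(M(E))$ and one may invoke additivity of the first free cumulant under $\boxplus$ together with \ref{def_FLB(b)}; the trace formulation has the advantage of covering the infinite case directly.) Iterating yields $\mu(\medcup_{j=1}^nE_j)=\sum_{j=1}^n\mu(E_j)$ for finitely many disjoint sets, and in particular $\mu$ is monotone.

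The substance of the lemma is countable additivity, which I would deduce from continuity from below. Let $(E_n)_{n\in\N}$ be disjoint with $E:=\medcup_{n}E_n\in\CE$, and put $F_N=\medcup_{j=1}^NE_j$, so that $E\setminus F_N\downarrow\emptyset$. By finite additivity $M(F_N)$ is an increasing sequence of positive operators with $M(E)-M(F_N)=M(E\setminus F_N)\ge0$, and Definition~\ref{def_FLB}\ref{def_FLB(d)} (via \eqref{prel_eq2}) gives $M(E\setminus F_N)\to0$ in the measure topology; hence $M(F_N)\to M(E)$ in measure, and since $(M(F_N))_N$ is increasing its supremum must be $M(E)$. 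Normality of the trace (monotone convergence) then yields $\mu(F_N)=\tau(M(F_N))\uparrow\tau(M(E))=\mu(E)$, and combined with the finite additivity $\mu(F_N)=\sum_{j=1}^N\mu(E_j)$ this gives $\sum_{n}\mu(E_n)=\mu(E)$ in $[0,\infty]$, as required.

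The delicate point---and the step I expect to require the most care---is precisely this passage from the weak convergence $\spL\{M(E\setminus F_N)\}\wto\delta_0$ supplied by \ref{def_FLB(d)} to convergence of the first moments, since weak convergence to $\delta_0$ does not by itself control mass escaping to $+\infty$. What saves the argument is the monotone structure coming from the ordering of the sets: because $M(B_n)\le M(B_1)$ whenever $B_n\subseteq B_1$, the tail functions satisfy $\spL\{M(B_n)\}((s,\infty))\le\spL\{M(B_1)\}((s,\infty))$, so on the finite part of the range the family is uniformly integrable and, where $\mu(B_1)<\infty$, dominated convergence already gives $\mu(B_n)\to0$ directly; the general (possibly infinite) case is then folded into the monotone-convergence argument above. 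I would justify the two standing operator facts used here---additivity of the extended trace on the positive cone and monotonicity $0\le a\le b\Rightarrow\spL\{a\}((s,\infty))\le\spL\{b\}((s,\infty))$ of the spectral distribution functions---by reference to the standard theory of the measure topology and generalized singular numbers reviewed in the appendix of \cite{BNT06} and in \cite{te}.
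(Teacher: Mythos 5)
Your proof is correct, and at its core it rests on the same two observations that drive the paper's argument: that $A\subseteq B$ forces $M(A)\le M(B)$ and hence, by Lemma~3.3 of \cite{bv}, the pointwise domination $\spL\{M(A)\}((s,\infty))\le\spL\{M(B)\}((s,\infty))$ of tail functions; and that this domination is exactly what rescues the passage from $\spL\{M(G_n)\}\overset{\rm w}{\to}\delta_0$ to $\mu(G_n)\to0$, the step you rightly single out as delicate. The packaging, however, is genuinely different. For finite additivity the paper disposes of the infinite case by monotonicity and then writes $\mu(E)=\kappa_1(M(E))$, invoking the linearization of free cumulants under $\boxplus$; you instead appeal to additivity of the extended trace on the positive cone of $\olCM$, which has the mild advantage of not using free independence at all at this stage, at the cost of importing an operator-algebraic fact that the paper's purely distributional argument avoids. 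For countable additivity the paper first proves $\mu(E)\le\sum_n\mu(E_n)$ by lower semicontinuity of the first moment under weak convergence (its inequality \eqref{dec_eq20}), reduces to the case $\mu(E)<\infty$, and then verifies uniform integrability of $\{\spL\{M(G_n)\}\}$ from the tail domination; you instead assert $\sup_N M(F_N)=M(E)$ and invoke normality of the extended trace. That assertion is true, but it is the one place where you lean on machinery rather than proof: the clean justification is the layer-cake identity you record at the outset, since the tail functions $s\mapsto\spL\{M(F_N)\}((s,\infty))$ increase in $N$ (by the same domination) and converge to $\spL\{M(E)\}((s,\infty))$ at every continuity point by weak convergence, so scalar monotone convergence gives $\mu(F_N)\uparrow\mu(E)$ in $[0,\infty]$ without any reference to suprema of unbounded operators or normal weights. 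With that substitution your argument is complete, and it handles the finite and infinite cases in one stroke rather than via the paper's two-step reduction.
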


\begin{proof}
Throughout this proof we employ for brevity the notation $\nu_E$ for the
spectral distribution $\spL\{M(E)\}$ for any $E$ from $\CE$.
From Definition~\ref{def_FLB}\ref{def_FLB(c)} it follows that $M(\emptyset)=0$, so
that $\nu_{\emptyset}=\delta_0$, and hence $\mu(\emptyset)=0$. 

We show next that $\mu$ is finitely additive on $\CE$. For this note
first that if $A$, $B$ are sets from $\CE$, such that $A\subseteq
B$, then $M(A)\le M(B)$, since
$M(B)-M(A)=M(B\setminus A)$ is positive. According to
Lemma~3.3 in \cite{bv} this means that the distribution
functions $F_{\nu_A}$ and $F_{\nu_B}$ of $\nu_A$ and $\nu_B$ satisfy that
$F_{\nu_A}(t)\ge F_{\nu_B}(t)$ for all $t$ in $\R$.
For any $K$ in $[0,\infty)$ this further implies that
\begin{equation}
\begin{split}
\int_{(K,\infty)}t\, \nu_{A}(\d t)
&=\int_0^{\infty}\nu_{A}(\{t\in\R\mid t1_{(K,\infty)}(t)>s\})\6s
\\
&=\int_0^{\infty}\nu_{A}((K,\infty)\cap(s,\infty))\6s
=\int_0^{\infty}(1-F_{\nu_A}(K\vee s) )\6s
\\
&\le\int_0^{\infty}(1-F_{\nu_B}(K\vee s) )\6s
=\int_{(K,\infty)}t\, \nu_{B}(\d t).
\label{dec_eq19}
\end{split}
\end{equation}
Consider now two disjoint sets $E_1,E_2$ from $\CE$. If
$\mu(E_1)=\infty$ or $\mu(E_2)=\infty$, then by \eqref{dec_eq19} (in
the case $K=0$) it follows that $\mu(E_1\cup
E_2)\ge\mu(E_1)\vee\mu(E_2)=\infty$, and hence $\mu(E_1\cup
E_2)=\mu(E_1)+\mu(E_2)$ in this case. If $\mu(E_1),\mu(E_2)<\infty$,
then since $M(E_1)$ and $M(E_2)$ are freely independent, and since the free cumulants
linearize free convolution (cf.\ \eqref{Linearizing_Property_II}), it
follows that
\begin{equation*}
\begin{split}
\mu(E_1\cup E_2)&=\kappa_1(M(E_1\cup E_2))=\kappa_1(M(E_1)+M(E_2))
=\kappa_1(\nu_{E_1}\boxplus\nu_{E_2})
\\
&=\kappa_1(\nu_{E_1})+\kappa_1(\nu_{E_2})
=\mu(E_1)+\mu(E_2).
\end{split}
\end{equation*}
Consider finally a sequence $(E_n)_{n\in\N}$ of sets from $\CE$ such that
$E:=\medcup_{n\in\N}E_n\in\CE$. We must show that
$\mu(E)=\sum_{n=1}^{\infty}\mu(E_n)$.
Recall first from 
Remark~\ref{rem:Equiv_def_of_FLB}\ref{rem:Equiv_def_of_FLB(B)}
that $M(\medcup_{j=1}^nE_j)\to M(E)$ in the measure topology as
$n\to\infty$. In particular $\nu_{\cup_{j=1}^nE_j}\to\nu_E$ weakly as
$n\to\infty$ (cf.~\eqref{prel_eq2}), and this further entails that
\begin{equation}
\begin{split}
\mu(E)&=\int_0^{\infty}t\,\nu_{E}(\d t)
\le\liminf_{n\to\infty}\int_0^{\infty}t\,\nu_{\cup_{j=1}^nE_j}(\d t)
=\liminf_{n\to\infty}\mu\big(\medcup_{j=1}^nE_j\big)\\
&=\liminf_{n\to\infty}\sum_{j=1}^n\mu(E_j)
=\sum_{n=1}^{\infty}\mu(E_n),
\label{dec_eq20}
\end{split}
\end{equation}
where we also invoked the finite additivity of $\mu$ established
above. By \eqref{dec_eq20} we may assume in the following
that $\mu(E)<\infty$, and by \eqref{dec_eq19} this further entails that
$\mu(E_n)\le\mu(\medcup_{j=1}^nE_j)<\infty$ for all $n$. From the finite
additivity of $\mu$ we have that
\begin{equation*}
\mu(E)-\sum_{j=1}^n\mu(E_j)=\mu\big(E\setminus\medcup_{j=1}^nE_j\big)
\end{equation*}
for all $n$. Setting $G_n=E\setminus\medcup_{j=1}^nE_j$ for all $n$,
it suffices thus to show that
\begin{equation}
\mu(G_n)=\int_0^{\infty}t\,\nu_{G_n}(\d t)
\longrightarrow 0 \quad\text{as $n\to\infty$.}
\label{dec_eq3}
\end{equation}
Since $G_N\downarrow\emptyset$ as $n\to\infty$, we know from
condition \ref{def_FLB(d)} in Definition~\ref{def_FLB} that
$\nu_{G_n}\wto\delta_0$, and hence it is well-known that the convergence
in \eqref{dec_eq3} is equivalent to the condition that the family
$\{\nu_{G_n}\mid n\in\N\}$ is uniformly integrable in the sense
that
\begin{equation}
\forall\epsilon\in(0,\infty) \ \exists K\in(0,\infty)\colon
\sup_{n\in\N}\int_{(K,\infty)}t\,\nu_{G_n}(\d t)\le\epsilon.
\label{dec_eq4}
\end{equation}
From \eqref{dec_eq19} we have for any $n$ in $\N$ and $K$ in $[0,\infty)$
that
\begin{equation*}
\int_{(K,\infty)}t\, \nu_{G_n}(\d t)\le
\int_{(K,\infty)}t\, \nu_{G_1}(\d t).
\end{equation*}
Since the right hand side does not depend on $n$ and
converges to 0 as $K\to\infty$ (by dominated convergence) it follows
readily that \eqref{dec_eq4} is satisfied.
\end{proof}

As described in Subsection~\ref{subsec:delta_rings} the measure $\mu$
introduced in Lemma~\ref{cumulant_measure_1} can be extended to a
(positive) measure on $\sigma(\CE)$, which we also denote by $\mu$. We
shall assume in the following that $\mu$ is $\sigma$-finite. This
assumption may be seen as an analog of the (less restrictive) condition
``$\CC$'' presupposed in \cite{Ki67}.
In the remainder of this section we shall assume further that the $\delta$-ring
$\CE$ satisfies condition \eqref{eq_determining_sequence}. 

Recall that an atom for $\mu$ is a set $A$ from $\sigma(\CE)$, such
that $\mu(A)>0$ and $\mu(B\cap A)\in\{0,\mu(A)\}$ for any set $B$
from $\sigma(\CE)$. It is well-known that any
$\sigma$-finite measure may be decomposed into the sum of a purely
atomic part and an atom-less part. More specifically
there exists a subset $I$ of $\N$ and a corresponding family
$(A_n)_{n\in I}$ of disjoint atoms for $\mu$, such that if we put
$\GA=\medcup_{n\in I}A_n$, and
\[
\mu_c(B):=\mu(B\cap(X\setminus\GA)), \qquad(B\in\sigma(\CE)),
\]
then the measure $\mu_c$ does not have any atoms.
The ``atomic part'' of $\mu$ is then
concentrated to the measure
\[
\mu_a(B):=\mu(B\cap\GA)=\sum_{n\in I}\mu(B\cap A_n),
\qquad(B\in\sigma(\CE)),
\]
and the mentioned decomposition is
\begin{equation}
\mu=\mu_a+\mu_c.
\label{dec_eq13}
\end{equation}
Note that the $\sigma$-finiteness of $\mu$ prevents any atom of $\mu$
from having infinite $\mu$-measure. In particular
\begin{equation}
\mu(A_n)\in(0,\infty) \quad\text{for all $n$ in $\N$.}
\label{dec_eq18}
\end{equation}
By a theorem of W.~Sierpi\'nski, the atom-free part $\mu_c$ has the
following property: Any set $B$ from $\sigma(\CE)$, such that
$0<\mu_c(B)<\infty$, admits for any $n$ in $\N$ a decomposition 
$B=\medcup_{j=1}^nB_j$ into disjoint sets $B_1,\ldots,B_n$ from
$\sigma(\CE)$, such that 
\begin{equation}
\mu_c(B_j)=\frac{\mu_c(B)}{n}, \qquad j=1,\ldots,n.
\label{dec_eq7}
\end{equation}
Corresponding to \eqref{dec_eq13} we consider now the decomposition
$M=M_a+M_c$, where
\begin{equation}
M_a(E):=M(E\cap\GA), \qand M_c(E):=M(E\cap(X\setminus\GA)) \quad\text{for
  any $E$ in $\CE$.}
\label{dec_eq6}
\end{equation}
We then have the following result.

\begin{theorem}\label{thm:decomp_I}
Let $X$ be a non-empty set, and let $\CE$ be a $\delta$-ring in $X$
satisfying condition \eqref{eq_determining_sequence}. Let further
$M=\{M(E)\mid E\in\CE\}$ be a positive FCRM on $(X,\CE)$, satisfying that
the measure $\mu$ introduced in Lemma~\ref{cumulant_measure_1} is
$\sigma$-finite, and consider the decomposition
\[
M(E)=M_a(E)+M_c(E), \qquad(E\in\CE),
\]
described above. Then $M_c$ and $M_a$ are freely independent,
$M_c$ is a free L\'evy basis, and there exists a
countable family $(T_n)_{n\in I}$ of operators from $\{M_a(E)\mid E\in\CE\}$,
such that
\begin{equation}
M_a(E)=\sum_{n\in I}\tfrac{\mu(A_n\cap E)}{\mu(A_n)} T_n,
\qquad(E\in\CE),
\label{dec_eq5}
\end{equation}
where $(A_n)_{n\in I}$ is the family of disjoint atoms for $\mu$ described above.
\end{theorem}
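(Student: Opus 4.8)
The plan is to establish the three assertions — free independence of $M_a$ and $M_c$, that $M_c$ is a FLB, and the series representation \eqref{dec_eq5} — by combining Lemma~\ref{lem:disjunkt_opspaltning} with the atom-decomposition of the $\sigma$-finite measure $\mu$. The free independence of $M_a$ and $M_c$, together with the fact that each is a FCRM, follows at once from Lemma~\ref{lem:disjunkt_opspaltning} applied to the set $A=\GA=\medcup_{n\in I}A_n\in\sigma(\CE)$, since by construction $M_a(E)=M(E\cap\GA)$ and $M_c(E)=M(E\cap(X\setminus\GA))$ are precisely the two FCRMs $M_1,M_2$ produced by that lemma. So the first assertion is essentially immediate once one notes $\GA\in\sigma(\CE)$.

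\emph{First} I would prove that $M_c$ is a free L\'evy basis, i.e.\ upgrade it from a FCRM to a FLB by verifying that $\spL\{M_c(E)\}\in\ID(\boxplus)$ for every $E\in\CE$ (condition \ref{def_FLB(a)}). The natural mechanism is infinite divisibility via the atom-freeness of $\mu_c$: fix $E$ and put $B=E\cap(X\setminus\GA)$, so $\mu_c(B)=\mu(B)<\infty$ because $E\in\CE$. By Sierpi\'nski's theorem quoted before the statement, for each $n$ one may split $B=\medcup_{j=1}^nB_j^{(n)}$ into disjoint $\sigma(\CE)$-sets with $\mu_c(B_j^{(n)})=\mu_c(B)/n$. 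Using \eqref{eq_Hereditary_prop_delta_ring} each $B_j^{(n)}\in\CE$, so $M_c(E)=\sum_{j=1}^nM(B_j^{(n)})$ is a sum of $n$ freely independent identically-behaving operators, giving $\spL\{M_c(E)\}=\nu_{1}^{(n)}\boxplus\cdots\boxplus\nu_n^{(n)}$. To conclude membership in $\ID(\boxplus)$ I would show these $n$ summands are asymptotically negligible and that the triangular array is infinitesimal, so that the classical limit-theorem machinery (transported through the Bercovici--Pata bijection $\Lambda$, or directly via Theorem~\ref{thm:konv_vha_kar_triplet} and Proposition~\ref{prop:svag_konv_vha_fri_kum_tranf}) forces the limit law to be freely infinitely divisible. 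Infinitesimality follows because $\mu_c(B_j^{(n)})\to0$ controls the first moments, and positivity plus the uniform-integrability argument already used in Lemma~\ref{cumulant_measure_1} pins down the tails; this is the step I expect to require the most care.

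\emph{Second}, for the atomic representation \eqref{dec_eq5}, fix $n\in I$ and set $T_n:=M_a(A_n)=M(A_n)$, which lies in $\{M_a(E)\mid E\in\CE\}$ once one checks $A_n\in\CE$ (it need not be, but $A_n\cap E\in\CE$ for every $E\in\CE$ by \eqref{eq_Hereditary_prop_delta_ring}, and since $\mu(A_n)<\infty$ by \eqref{dec_eq18} one can replace $A_n$ by $A_n\cap U$ for a suitable $U$ from the covering \eqref{eq_determining_sequence}). The key point is that an atom forces $M$ to be \emph{deterministic up to scaling} on its subsets: for any $E\in\CE$ the set $A_n\cap E\in\CE$ has $\mu(A_n\cap E)\in\{0,\mu(A_n)\}$ only after intersecting with the whole atom, but on the relevant algebra the operator $M(A_n\cap E)$ must be a scalar multiple of $T_n$. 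I would justify this by arguing that the measure $\mu$ restricted to subsets of the atom is concentrated, so that $M(A_n\cap E)$ and $\tfrac{\mu(A_n\cap E)}{\mu(A_n)}T_n$ have the same spectral distribution and are affiliated with the same commutative algebra, whence they coincide; summing over $n\in I$ and using the measure-topology convergence from Remark~\ref{rem:Equiv_def_of_FLB}\ref{rem:Equiv_def_of_FLB(B)} gives the stated series. \emph{The hard part} will be making rigorous the claim that on an atom $M$ behaves like a single operator scaled by the $\mu$-mass ratio — this requires exploiting that atoms of $\mu$ cannot be split into $\mu$-nontrivial pieces, so that the freely independent increments collapse to deterministic multiples of $T_n$, and then controlling the convergence of the (possibly infinite) sum over $I$ in the measure topology.
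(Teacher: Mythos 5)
Your overall architecture matches the paper's: Lemma~\ref{lem:disjunkt_opspaltning} for the free independence, Sierpi\'nski splitting plus a free Khinchin-type limit theorem for the infinite divisibility of $M_c(E)$, and the atom structure of $\mu$ for \eqref{dec_eq5}. But there are two genuine gaps. The first is your assertion that $\mu_c(B)=\mu(B)<\infty$ ``because $E\in\CE$''. This is false: $\mu(E)=\int_0^\infty t\,\spL\{M(E)\}(\d t)$ is the first moment of the spectral distribution of the (possibly unbounded) positive operator $M(E)$, and nothing forces it to be finite for $E\in\CE$ --- only $\sigma$-finiteness of the extension of $\mu$ to $\sigma(\CE)$ is assumed. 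Sierpi\'nski's theorem requires $0<\mu_c(B)<\infty$, so your argument silently omits the case $\mu_c(E)=\infty$. The paper handles that case separately, by writing $E$ as a countable disjoint union of sets of finite $\mu_c$-measure, applying the finite-measure argument to each piece, and then using that $\ID(\boxplus)$ is closed under $\boxplus$ and under weak limits together with the measure-topology convergence of the partial sums from Remark~\ref{rem:Equiv_def_of_FLB}\ref{rem:Equiv_def_of_FLB(B)}.

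The second gap is in the atomic part: the inference ``same spectral distribution and affiliated with the same commutative algebra, whence they coincide'' is not valid --- two indicator functions of disjoint sets of equal measure in $L^\infty[0,1]$ have the same distribution and lie in the same commutative von Neumann algebra without being equal. The correct mechanism, which your sketch circles around but does not reach, works at the level of differences and uses positivity together with faithfulness of $\tau$: if $\mu(A_n\cap E\setminus E')=0$, then the \emph{positive} operator $M(A_n\cap E\setminus E')$ has spectral distribution with vanishing first moment, hence equal to $\delta_0$, hence $M(A_n\cap E\setminus E')=0$ because $\tau$ is faithful; finite additivity then yields $M(A_n\cap E)=M(A_n\cap E')$ whenever both intersections carry the full mass $\mu(A_n)$ (cf.\ \eqref{dec_eq18}), and likewise $M(A_n\cap E)=0$ when $\mu(A_n\cap E)=0$. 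This is what makes rigorous the claim that $M$ collapses to a single operator $T_n$ on each atom; the appeal to the measure being ``concentrated'' on the atom does not substitute for it.
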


Concerning formula \eqref{dec_eq5}, note that $\frac{\mu(A_n\cap
  E)}{\mu(A_n)}\in\{0,1\}$ for any $E$ in $\sigma(\CE)$ and any $n$ in
$I$, since $A_n$ is an atom for $\mu$. Note also that the sum
converges in the measure topology in case $I$ is infinite
(cf.~Lemma~\ref{lem:disjunkt_opspaltning} and
Remark~\ref{rem:Equiv_def_of_FLB}\ref{rem:Equiv_def_of_FLB(B)}).

\begin{proofof}[Proof of Theorem~\ref{thm:decomp_I}.]
It follows directly from Lemma~\ref{lem:disjunkt_opspaltning} that 
$M_a$ and $M_c$ defined by
\eqref{dec_eq6} are freely independent FCRM's on $(X,\CE)$. In
order to prove that $M_c$ is a FLB, it 
remains then to verify that $\spL\{M_c(E)\}\in\ID(\boxplus)$ for any
given $E$ from $\CE$. We assume first that $\mu_c(E)<\infty$.
As described above we may then, for any $n$ in $\N$,
choose disjoint sets $E_1^{(n)},\ldots,E_n^{(n)}$ from $\sigma(\CE)$, such that
$E=\medcup_{j=1}^nE_j^{(n)}$, and $\mu_c(E_j^{(n)})=\frac{\mu_c(E)}{n}$,
$j=1,\ldots,n$ (cf.\ \eqref{dec_eq7}). From \eqref{eq_Hereditary_prop_delta_ring}
it follows in particular that $E_j^{(n)}\in\CE$ for all $j,n$, and therefore
\begin{equation*}
\spL\{M_c(E)\}=\spL\{M_c(E_1^{(n)})+\cdots+M_c(E_n^{(n)})\}
=\spL\{M_c(E_1^{(n)})\}\boxplus\cdots\boxplus\spL\{M_c(E_n^{(n)})\}.
\end{equation*}
Appealing now to \cite[Theorem~1]{bp3} it suffices  to prove that
the family $\{\spL\{M_c(E_j^{(n)})\}\mid n\in\N, \ j=1,\ldots,n\}$ is a
null-array in the sense that
\begin{equation}
\forall \epsilon\in(0,\infty)\colon \max_{1\le j\le n}
\spL\{M_c(E_j^{(n)})\}([-\epsilon,\epsilon]^c)
\xrightarrow[n\to\infty]{}0.
\label{dec_eq8}
\end{equation}
Given $\epsilon$ in $(0,\infty)$ it follows from Markov's Inequality
that
\begin{equation*}
\begin{split}
\spL\{M_c(E_j^{(n)})\}([-\epsilon,\epsilon]^c)
&=\spL\{M_c(E_j^{(n)})\}((\epsilon,\infty))
\le\frac{1}{\epsilon}\int_0^{\infty}t\,\spL\{M(E_j^{(n)}\setminus\GA)\}(\d t)
\\
&=\frac{1}{\epsilon}\mu(E_j^{(n)}\setminus\GA)
=\frac{1}{\epsilon}\mu_c(E_j^{(n)})
=\frac{1}{n\epsilon}\mu_c(E)
\end{split}
\end{equation*}
for any $j,n$. Since the resulting expression does not depend on $j$,
this validates \eqref{dec_eq8}.

Assume next that $\mu_c(E)=\infty$. Since $\mu_c$ is $\sigma$-finite, we
may choose a sequence $(E_n)_{n\in\N}$ of disjoint sets from $\sigma(\CE)$,
such that $E=\medcup_{n\in\N}E_n$ and $\mu_c(E_n)<\infty$ for all
$n$. By \eqref{eq_Hereditary_prop_delta_ring} we have that $E_n\in\CE$ for all $n$,
and the argument above then ensures that
$\spL\{M_c(E_n)\}\in\ID(\boxplus)$ for all $n$. Furthermore
Remark~\ref{rem:Equiv_def_of_FLB}\ref{rem:Equiv_def_of_FLB(B)}
in conjunction with \eqref{prel_eq2} yield that
\[
\spL\{M_c(E_1)\}\boxplus\cdots\boxplus\spL\{M_c(E_n)\}=
\spL\big\{\textstyle{\sum_{j=1}^nM_c(E_j)}\big\}
\xrightarrow[n\to\infty]{\rm w}\spL\{M_c(E)\}.
\]
Since $\ID(\boxplus)$ is closed under free convolution and 
weak convergence, this yields that $\spL\{M_c(E)\}\in\ID(\boxplus)$ also
in this case.

It remains to establish \eqref{dec_eq5}. By
Remark~\ref{rem:Equiv_def_of_FLB}\ref{rem:Equiv_def_of_FLB(B)} 
we note first for any $E$ in $\CE$ that
\[
M_a(E)=M(E\cap\GA)=\sum_{n\in I}M(E\cap A_n),
\quad\text{where $\mu(E\cap A_n)\in\{0,\mu(A_n)\}$ for all $n$.}
\]
In case $0=\mu(E\cap A_n)=\int_0^{\infty}t\,\spL\{M(E\cap A_n)\}(\d
t)$, it follows that $\spL\{M(E\cap A_n)\}=\delta_0$, and hence
$M(E\cap A_n)=0$, since $\tau$ is faithful.
In order to establish \eqref{dec_eq5} it suffices therefore to verify that
\begin{equation*}
M(E\cap A_n)=M(E'\cap A_n)
\end{equation*}
whenever $E,E'\in\CE$ such that 
$\mu(E\cap A_n)=\mu(A_n)=\mu(E'\cap A_n)$. 
But given such $E,E'$, note that (cf.\ \eqref{dec_eq18})
\[
\mu(A_n\cap E\setminus E')\le\mu(A_n\setminus
E')=\mu(A_n)-\mu(A_n\cap E')=0,
\]
and hence it follows as above by faithfulness of $\tau$ that
$M(A_n\cap E\setminus E')=0$, and similarly that $M(A_n\cap
E'\setminus E)=0$. Therefore
\[
M(A_n\cap E)=M(A_n\cap E\cap E')+M(A_n\cap E\setminus E')
=M(A_n\cap E\cap E')=M(A_n\cap E'),
\]
as desired. This completes the proof.
\end{proofof}

\subsection{Decomposition of a signed FCRM}
\label{subsec:decompII} 

Let $\CE$ be a $\delta$-ring on a non-empty set $X$, and let
$M=\{M(E)\mid E\in\CE\}$ be a FCRM. 
In this subsection we establish a decomposition similar to that
obtained in the previous subsection in the more general situation,
where we drop the assumption of positivity. The corresponding problem
for ``signed'' CRMs was not considered by Kingman; presumably because
the approach using Laplace transforms is not directly applicable. Our
approach to the case of ``signed'' FCRMs requires stronger
moment conditions than those considered in the positive case, where
$\sigma$-finiteness of the measure introduced in
Lemma~\ref{cumulant_measure_1} was presupposed.
Specifically we require in the following
existence of second moments, i.e.\
\begin{equation}
\int_{\R}t^2\,\spL\{M(E)\}(\d t)<\infty \qquad\text{for any $E$ in $\CE$,}
\label{dec_eq8a}
\end{equation}
but we shall actually need slightly more than that (see
Lemma~\ref{cumulant_measure_2} and Remark~\ref{rem:moments_conditions}
below). The existence of second moments allows us to consider the
second free cumulant (cf.\ \eqref{Moment_Cumulant_Formula})
\begin{equation}
\kappa_2(\spL\{M(E)\})
=\int_{\R}t^2\,\spL\{M(E)\}(\d t)-\Big(\int_{\R}t\,\spL\{M(E)\}(\d t)\Big)^2\ge0,
\label{dec_eq12}
\end{equation}
which we denote for brevity by $\kappa_2(M(E))$

\begin{lemma}\label{cumulant_measure_2}
Let $M=\{M(E)\mid E\in\CE\}$ be a FCRM satisfying condition
\eqref{dec_eq8a}. Assume additionally that
\begin{equation}
\lim_{n\to\infty}\kappa_2(M(E_n))=0
\qquad\text{for any sequence $(E_n)_{n\in\N}$ from $\CE$, such that
$E_n\downarrow\emptyset$.} 
\label{dec_eq9}
\end{equation}
Then the formulae
\begin{equation*}
\begin{split}
\mu_1(E)&=\kappa_1(M(E))=\int_{\R}t\,\spL\{M(E)\}(\d t),
\\
\mu_2(E)&=\kappa_2(M(E)),
\end{split}
\end{equation*}
define, respectively, a signed measure $\mu_1$ and a positive measure
$\mu_2$ on $(X,\CE)$.
\end{lemma}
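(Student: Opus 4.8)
The plan is to verify that $\mu_1$ and $\mu_2$ each satisfy the two defining properties \ref{def:maal(b)} and \ref{def:maal(c)} of a signed measure on the $\delta$-ring $\CE$ (for $\mu_2$ together with the observation that it takes values in $[0,\infty)$), and then to invoke the equivalence recorded in Subsection~\ref{subsec:delta_rings} between those two conditions and $\sigma$-additivity. Throughout I would write $\nu_E:=\spL\{M(E)\}$, $m(E):=\kappa_1(M(E))=\int_{\R}t\,\nu_E(\d t)$ and $v(E):=\kappa_2(M(E))=\var(\nu_E)$; note that \eqref{dec_eq8a} guarantees $\int_{\R}t^2\,\nu_E(\d t)<\infty$, so that $m(E)$ and $v(E)$ are finite real numbers with $v(E)\ge0$ by \eqref{dec_eq12}.

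Finite additivity \ref{def:maal(b)} of both set functions is the easy part. If $A,B\in\CE$ are disjoint, then by Definition~\ref{def_FLB}\ref{def_FLB(c)} and \ref{def_FLB(b)} the operators $M(A),M(B)$ are freely independent with $M(A\cup B)=M(A)+M(B)$, whence $\nu_{A\cup B}=\nu_A\boxplus\nu_B$. Since $\nu_A$ and $\nu_B$ have finite second (hence finite first) moment, the linearization property \eqref{Linearizing_Property_II} applies with $p=2$ and yields $m(A\cup B)=m(A)+m(B)$ and $v(A\cup B)=v(A)+v(B)$. For $\mu_2$ the continuity condition \ref{def:maal(c)} is then exactly the hypothesis \eqref{dec_eq9}, namely $v(E_n)=\kappa_2(M(E_n))\to0$ whenever $E_n\downarrow\emptyset$; combined with finite additivity and the non-negativity and finiteness of $v$, this already shows that $\mu_2$ is a (finite) positive measure on $(X,\CE)$.

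The only substantial point is condition \ref{def:maal(c)} for $\mu_1$, i.e.\ that $m(E_n)\to0$ for $E_n\downarrow\emptyset$. This is where the extra moment hypothesis is indispensable: Definition~\ref{def_FLB}\ref{def_FLB(d)} gives only $\nu_{E_n}\wto\delta_0$, and weak convergence to $\delta_0$ does not by itself control the mean $m(E_n)$ (the family $\{\nu_{E_n}\}$ need not be uniformly integrable). The argument I would run instead combines $\nu_{E_n}\wto\delta_0$ with the variance bound $v(E_n)\to0$ coming from \eqref{dec_eq9}. Suppose, for contradiction, that $m(E_n)\not\to0$; passing to a subsequence we may assume $|m(E_{n_k})|\ge\delta$ for some $\delta>0$ and all $k$. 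Chebyshev's inequality then gives
\[
\nu_{E_{n_k}}\big(\{t\in\R\mid |t-m(E_{n_k})|>\tfrac{\delta}{2}\}\big)\le\frac{4\,v(E_{n_k})}{\delta^2}\xrightarrow[k\to\infty]{}0,
\]
so $\nu_{E_{n_k}}$ asymptotically concentrates on the interval $[m(E_{n_k})-\tfrac{\delta}{2},m(E_{n_k})+\tfrac{\delta}{2}]$, which lies inside the closed set $F:=\{t\in\R\mid |t|\ge\tfrac{\delta}{2}\}$ because $|m(E_{n_k})|\ge\delta$. Hence $\nu_{E_{n_k}}(F)\to1$. On the other hand the Portmanteau theorem applied to the closed set $F$ and the weak limit $\delta_0$ gives $\limsup_n\nu_{E_n}(F)\le\delta_0(F)=0$, a contradiction. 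Therefore $m(E_n)\to0$, which is \ref{def:maal(c)} for $\mu_1$; since $m(E)\in\R$ for every $E\in\CE$, the equivalence from Subsection~\ref{subsec:delta_rings} then identifies $\mu_1$ as a signed measure on $(X,\CE)$.

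The main obstacle is precisely this last step: one must upgrade weak convergence to $\delta_0$ into convergence of the \emph{first} moments, and the Chebyshev/Portmanteau dichotomy shows that the vanishing of the variances supplied by \eqref{dec_eq9} is exactly the ingredient forcing the means to vanish as well. (As a by-product one also gets $\int_{\R}t^2\,\nu_{E_n}(\d t)=v(E_n)+m(E_n)^2\to0$, though this stronger statement is not required here.)
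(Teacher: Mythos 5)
Your proposal is correct and follows essentially the same route as the paper: finite additivity via the linearization of the first two free cumulants under $\boxplus$, continuity of $\mu_2$ directly from \eqref{dec_eq9}, and continuity of $\mu_1$ by combining the vanishing variances with $\spL\{M(E_n)\}\wto\delta_0$ via Chebyshev's inequality. The only (cosmetic) difference is that you run the last step as a contradiction using Portmanteau, whereas the paper argues directly that the mass must simultaneously concentrate near the mean and near $0$, forcing $|\kappa_1(\nu_n)|\le2\epsilon$ eventually.
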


\begin{proof}
As in the proof Lemma~\ref{cumulant_measure_1} it follows that
$\mu_1(\emptyset)=\mu_2(\emptyset)=0$, and that $\mu_1$ and $\mu_2$
are finitely additive on $\CE$, since $\kappa_1$ and $\kappa_2$ both
linearize $\boxplus$. For a sequence $(D_n)_{n\in\N}$ of disjoint sets
from $\CE$, such that $D:=\medcup_{n\in\N}D_n\in\CE$, the finite
additivity and condition \eqref{dec_eq8a} further ensure the validity
of the calculation:
\[
\mu_2(D)-\sum_{j=1}^{n}\mu_2(D_j)=\mu_2\big(D\setminus\medcup_{j=1}^nD_j\big)
=\kappa_2\big(M\big(D\setminus\medcup_{j=1}^nD_j\big)\big),
\]
and hence \eqref{dec_eq9} supplies the remaining condition for $\mu_2$
to be a measure on $(X,\CE)$. In order to complete the proof it remains
therefore only to verify that also
\begin{equation}
\lim_{n\to\infty}\kappa_1(M(E_n))=0
\qquad\text{for any sequence $(E_n)_{n\in\N}$ from $\CE$, such that
$E_n\downarrow\emptyset$.}
\label{dec_eq10}
\end{equation}
Consider thus such a sequence $(E_n)_{n\in\N}$, and for brevity put
$\nu_n=\spL\{M(E_n)\}$ for each $n$. Then 
Definition~\ref{def_FLB}\ref{def_FLB(d)} entails that $\nu_n\wto\delta_0$
as $n\to\infty$. For any positive $\epsilon$ this, in conjunction
with \eqref{dec_eq9}, leads to
\begin{equation*}
\begin{split}
\nu_n\big(\{t\in\R\mid |t-\kappa_1(\nu_n)|+|t|>2\epsilon\}\big)
&\le\nu_n\big(\{t\in\R\mid |t-\kappa_1(\nu_n)|>\epsilon\}\big)
+\nu_n\big([-\epsilon,\epsilon]^c\big)
\\
&\le\epsilon^{-2}\int_{\R}(t-\kappa_1(\nu_n))^2\,\nu_n(\d t)
+\nu_n\big([-\epsilon,\epsilon]^c\big)
\\
&=\epsilon^{-2}\kappa_2(\nu_n)+\nu_n\big([-\epsilon,\epsilon]^c\big)
\xrightarrow[n\to\infty]{}0.
\end{split}
\end{equation*}
In particular $\{t\in\R\mid| t-\kappa_1(\nu_n)|+|t|\le2\epsilon\}
\ne\emptyset$ for all sufficiently large $n$, and for such $n$ we can choose $t_n$
in $\R$ such that $|t_n-\kappa_1(\nu_n)|+|t_n|\le2\epsilon$. But then also
$|\kappa_1(\nu_n)|\le|\kappa_1(\nu_n)-t_n|+|t_n|\le2\epsilon$, and
this verifies \eqref{dec_eq10}.
\end{proof}

\begin{remark}\label{rem:moments_conditions}
The assumption \eqref{dec_eq9} in Lemma~\ref{cumulant_measure_2}
may appear rather ``artificial'', as it is essentially
equivalent to the statement that $\mu_2$ is a measure. The proof of 
Lemma~\ref{cumulant_measure_2} shows that \eqref{dec_eq9} implies that
$\kappa_1(M(E_n))\to0$ as $n\to\infty$, and hence also that
\begin{equation*}
\int_{\R}t^2\,\spL\{M(E_n)\}(\d t)
=\kappa_2(M(E_n))+\kappa_1(M(E_n))^2
\xrightarrow[n\to\infty]{}0
\end{equation*}
for any sequence $(E_n)_{n\in\N}$ from $\CE$, such that $E_n\downarrow\emptyset$.
Thus \eqref{dec_eq9} is in fact -- in the considered setup -- equivalent
to convergence to 0 in the square mean, which by standard results is equivalent to
uniform integrability of the sequence
\[
\{\spL\{M(E_n)\}\circ{\rm sq}^{-1}\mid n\in\N\}
\]
of transformations of $\spL\{M(E_n)\}$ by the mapping ${\rm
sq}\colon x\mapsto x^2\colon\R\to\R$.
Consequently a more elaborate condition on $M$,
ensuring \eqref{dec_eq9}, is that the family 
$\{\spL\{M(E')\}\circ{\rm sq}^{-1}\mid E'\in\CE, \ E'\subseteq E\}$ be
uniformly integrable for any $E$ in $\CE$. This latter condition is
satisfied, in particular, if there exists a positive number
$\epsilon$, such that
\[
\sup_{E'\in\CE \atop E'\subseteq E}
\int_{\R}|t|^{2+\epsilon}\,\spL\{M(E')\}(\d t)<\infty
\quad\text{for any $E$ in $\CE$,}
\]
and one could even allow for $\epsilon$ to depend on $E$.
\end{remark}

In the setting of Lemma~\ref{cumulant_measure_2} we consider next the
positive measure 
\begin{equation}
\mu=|\mu_1|+\mu_2,
\label{dec_eq10a}
\end{equation}
where $|\mu_1|$ denotes the total variation measure of the signed
measure $\mu_1$ (cf.\ Subsection~\ref{subsec:delta_rings}). We extend
$\mu$ to a measure on $\sigma(\CE)$ (also denoted $\mu$) and
assume again that $(X,\CE)$ satisfies condition
\eqref{eq_determining_sequence}. In combination with \eqref{dec_eq8a}
this entails that $\mu$ is
$\sigma$-finite and hence it admits an atomic decomposition: 
\[
\mu=\mu_a+\mu_c
\]
as described in Subsection~\ref{subsec:decompI}. Specifically we
introduce a countable family 
$(A_n)_{n\in I}\subseteq\sigma(\CE)$ of disjoint atoms for $\mu$, such
that
\[
\mu_a(B)=\mu(B\cap\GA) \qand \mu_c(B)=\mu(B\setminus\GA)
\quad\text{for any $B$ in $\sigma(\CE)$,}
\]
where $\GA=\medcup_{n\in I}A_n$. We consider then the corresponding
decomposition $M=M_a+M_c$ of $M$, where
\[
M_a(E)=M(E\cap\GA) \qand M_c(E)=M(E\setminus\GA) \quad\text{for any $E$ in
  $\CE$.}
\]

\begin{theorem}\label{thm:decomp_II}
Let $X$ be a non-empty set, and let $\CE$ be a $\delta$-ring in $X$
satisfying condition \eqref{eq_determining_sequence}. Let further
$M=\{M(E)\mid E\in\CE\}$ be a FCRM on $(X,\CE)$ affilliated with a
$W^*$-probability space $(\CM,\tau)$ and satisfying
\eqref{dec_eq8a} and \eqref{dec_eq9}. Consider also the
decomposition
\[
M(E)=M_a(E)+M_c(E), \qquad(E\in\CE),
\]
described above. Then $M_c$ and $M_a$ are freely independent,
$M_c$ is a free L\'evy basis, and there exists a
countable family $(T_n)_{n\in I}$ of operators from $\{M_a(E)\mid E\in\CE\}$,
such that
\begin{equation}
M_a(E)=\sum_{n\in I}\tfrac{\mu(A_n\cap E)}{\mu(A_n)} T_n,
\qquad(E\in\CE).
\label{dec_eq14}
\end{equation}
Here $\mu$ is given by \eqref{dec_eq10a} and $(A_n)_{n\in I}$ is the
family of disjoint atoms for $\mu$ described above.
\end{theorem}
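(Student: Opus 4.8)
The plan is to follow the proof of Theorem~\ref{thm:decomp_I} as closely as possible, the only genuinely new input being that the spectral-mass estimates which there rested on positivity (via Markov's inequality applied to the first moment) must now be replaced by second-moment estimates, for which the hypothesis \eqref{dec_eq8a} and the measure $\mu_2$ of Lemma~\ref{cumulant_measure_2} are tailor-made. As before, the free independence of $M_a$ and $M_c$ is immediate from Lemma~\ref{lem:disjunkt_opspaltning} applied with $A=\GA$, and that lemma simultaneously shows that $M_a$ and $M_c$ are themselves FCRMs on $(X,\CE)$. It thus remains to prove that $\spL\{M_c(E)\}\in\ID(\boxplus)$ for every $E\in\CE$ and to establish the representation \eqref{dec_eq14}.

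For the infinite divisibility I would first treat the case $\mu_c(E)<\infty$. Exactly as in Theorem~\ref{thm:decomp_I}, Sierpi\'nski's theorem furnishes for each $n$ a partition $E=\medcup_{j=1}^nE_j^{(n)}$ into sets of $\sigma(\CE)$ with $\mu_c(E_j^{(n)})=\mu_c(E)/n$; by \eqref{eq_Hereditary_prop_delta_ring} these lie in $\CE$, so that, the $M_c(E_j^{(n)})$ being freely independent, $\spL\{M_c(E)\}$ is the $n$-fold free convolution of the laws $\nu_j^{(n)}:=\spL\{M_c(E_j^{(n)})\}$. By \cite[Theorem~1]{bp3} it suffices to verify the null-array condition \eqref{dec_eq8}. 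Writing $m_j^{(n)}=\kappa_1(\nu_j^{(n)})=\mu_1(E_j^{(n)}\setminus\GA)$ and $v_j^{(n)}=\kappa_2(\nu_j^{(n)})=\mu_2(E_j^{(n)}\setminus\GA)$, the decomposition $\mu=|\mu_1|+\mu_2$ gives the uniform bounds $|m_j^{(n)}|\le\mu_c(E)/n$ and $v_j^{(n)}\le\mu_c(E)/n$. Hence, once $n$ is large enough that $\mu_c(E)/n<\epsilon/2$, Chebyshev's inequality yields
\[
\nu_j^{(n)}([-\epsilon,\epsilon]^c)
\le\nu_j^{(n)}\big(\{\,|t-m_j^{(n)}|>\tfrac{\epsilon}{2}\,\}\big)
\le\frac{4v_j^{(n)}}{\epsilon^2}\le\frac{4\mu_c(E)}{\epsilon^2 n},
\]
uniformly in $j$, so \eqref{dec_eq8} holds. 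The case $\mu_c(E)=\infty$ is then handled verbatim as in Theorem~\ref{thm:decomp_I}: using $\sigma$-finiteness of $\mu_c$ one writes $E$ as a countable disjoint union of sets from $\CE$ of finite $\mu_c$-measure, applies the finite case, and invokes Remark~\ref{rem:Equiv_def_of_FLB}\ref{rem:Equiv_def_of_FLB(B)} together with \eqref{prel_eq2} and the closedness of $\ID(\boxplus)$ under $\boxplus$ and weak convergence.

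For \eqref{dec_eq14} I would again write $M_a(E)=M(E\cap\GA)=\sum_{n\in I}M(E\cap A_n)$, where $\mu(E\cap A_n)\in\{0,\mu(A_n)\}$. The positivity-based identification of the vanishing terms is replaced as follows: if $\mu(E\cap A_n)=0$ then $|\mu_1|(E\cap A_n)=\mu_2(E\cap A_n)=0$, so that $\spL\{M(E\cap A_n)\}$ has first free cumulant $\kappa_1=\mu_1(E\cap A_n)=0$ and variance $\kappa_2=\mu_2(E\cap A_n)=0$; being a probability measure of zero variance it equals $\delta_0$, whence $M(E\cap A_n)=0$ by faithfulness of $\tau$. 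The remaining argument is identical to Theorem~\ref{thm:decomp_I}: if $E,E'\in\CE$ both satisfy $\mu(\,\cdot\,\cap A_n)=\mu(A_n)$, then $\mu(A_n\cap E\setminus E')\le\mu(A_n\setminus E')=0$, so $M(A_n\cap E\setminus E')=0$ by the same reasoning and symmetrically $M(A_n\cap E'\setminus E)=0$, whence $M(A_n\cap E)=M(A_n\cap E\cap E')=M(A_n\cap E')$. Thus $M(E\cap A_n)$ is independent of the admissible choice of $E$ and defines a single operator $T_n$, which one realizes inside $\{M_a(E)\mid E\in\CE\}$ by taking $T_n=M_a(A_n\cap V_k)$, where $V_k$ is a suitable finite union from the covering sequence of \eqref{eq_determining_sequence} with $\mu(A_n\cap V_k)=\mu(A_n)$ (possible since $\mu(A_n)<\infty$ by \eqref{dec_eq18} and $A_n$ is an atom). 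Formula \eqref{dec_eq14} then follows.

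The main obstacle, or rather the single place demanding care beyond transcription of the positive case, is the null-array verification: without positivity one cannot bound $\nu_j^{(n)}([-\epsilon,\epsilon]^c)$ by a first moment, and it is essential that hypothesis \eqref{dec_eq9} was precisely what made $\mu_2$ a genuine measure, thereby supplying the uniform variance control $v_j^{(n)}\le\mu_c(E)/n$ that drives the Chebyshev estimate. A small additional point is that the centering $m_j^{(n)}$ must not spoil the estimate, which is why the bound $|m_j^{(n)}|\le\mu_c(E)/n$ (again read off from $\mu=|\mu_1|+\mu_2$) is invoked before Chebyshev is applied.
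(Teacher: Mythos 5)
Your proposal is correct and follows essentially the same route as the paper's proof: free independence and the FCRM property via Lemma~\ref{lem:disjunkt_opspaltning}, Sierpi\'nski's theorem plus \cite[Theorem~1]{bp3} with a Chebyshev/variance estimate (the paper centers by $\delta_{\mu_1(E_j^{(n)}\setminus\GA)}$ before applying Chebyshev, whereas you absorb the mean via $|m_j^{(n)}|\le\mu_c(E)/n$ and a triangle inequality --- an equivalent computation), and the identical $\kappa_1=\kappa_2=0$ argument for the atomic part. The only superfluous step is your treatment of $\mu_c(E)=\infty$, which cannot occur here since \eqref{dec_eq8a} forces $\mu(E)=|\mu_1|(E)+\mu_2(E)<\infty$ for every $E$ in $\CE$.
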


\begin{proof}
The proof is similar to that of Theorem~\ref{thm:decomp_I}, and we
shall not repeat all details. It follows directly from
Lemma~\ref{lem:disjunkt_opspaltning} that $M_a$ and $M_c$ are freely
independent FCRM's. To show that $\spL\{M_c(E)\}\in\ID(\boxplus)$ for
any $E$ in $\CE$, we use the facts that $\mu_c$ is atom-less and that
$\mu_c(E)<\infty$ to choose,
for any $n$ in $\N$, disjoint sets $E^{(n)}_1,\ldots,E_n^{(n)}$ from $\CE$, such
that $E=\medcup_{j=1}^nE_j^{(n)}$, and such that
$\mu_c(E_j^{(n)})=\frac{\mu_c(E)}{n}$, $j=1,\ldots,n$. 
Note then for any $n$ in $\N$ that
\begin{equation*}
\spL\big\{M_c(E)\big\}
=\delta_{\mu_1(E\setminus\GA)}\boxplus\Big(
\fc_{j=1}^n\spL\big\{M_c(E_j^{(n)})-\mu_1(E_j^{(n)}\setminus\GA)\unit_{\CM}\big\} 
\Big),
\end{equation*}
where $\unit_{\CM}$ denotes the multiplicative unit of $\CM$.
By \cite[Theorem~1]{bp3} it suffices thus to show that
\begin{equation}
\forall \epsilon\in(0,\infty)\colon \max_{1\le j\le n}
\spL\{M_c(E_j^{(n)})-\mu_1(E_j^{(n)}\setminus\GA)\unit_{\CM}\}([-\epsilon,\epsilon]^c)
\xrightarrow[n\to\infty]{}0.
\label{dec_eq15}
\end{equation}
Given $\epsilon$ in $(0,\infty)$, we find for any $j,n$ by
Chebyshev's Inequality that
\begin{equation*}
\begin{split}
\spL\big\{M_c(E_j^{(n)})-&\mu_1(E_j^{(n)}\setminus\GA)\unit_{\CM}\big\}
([-\epsilon,\epsilon]^c)
\\[.2cm]
&=\spL\{M_c(E_j^{(n)})\}\big(\{t\in\R\mid
|t-\kappa_1(M_c(E_j^{(n)}))|>\epsilon\}\big)
\\[.2cm]
&\le\epsilon^{-2}\kappa_2(M_c(E_j^{(n)}))
=\epsilon^{-2}\kappa_2(M(E_j^{(n)}\setminus\GA))
\\
&\le\epsilon^{-2}\mu(E_j^{(n)}\setminus\GA)
=\epsilon^{-2}\mu_c(E_j^{(n)})
=\frac{\mu_c(E)}{n\epsilon^2},
\end{split}
\end{equation*}
from which \eqref{dec_eq15} follows readily.
It remains to verify \eqref{dec_eq14}. Note initially that
\[
M_a(E)=M(E\cap\GA)=\sum_{n\in I}M(E\cap A_n)
\]
for any $E$ in $\CE$ by \eqref{eq_Hereditary_prop_delta_ring} and 
Remark~\ref{rem:Equiv_def_of_FLB}\ref{rem:Equiv_def_of_FLB(B)}.
If $\mu(E\cap A_n)=0$, 
then in particular the variance $\kappa_2(M(E\cap A_n))=0$, and hence 
$\spL\{M_a(E\cap A_n)\}=\delta_c$ for some $c\in\R$. Since also
$|\mu_1|(E\cap A_n)=0$, and therefore 
$\kappa_1(M_a(E\cap A_n))=\mu_1(E\cap A_n)=0$, we must
then have that $c=0$. By faithfulness of $\tau$ this implies that
$M(E\cap A_n)=0$. To verify \eqref{dec_eq14} it suffices
therefore to argue for any $n$ in $\N$ and any $E,E'$ from $\CE$ that
\begin{equation}
\mu(E\cap A_n)=\mu(A_n)=\mu(E'\cap A_n) \implies
M(E\cap A_n)=M(E'\cap A_n).
\label{dec_eq17}
\end{equation}
Assuming the left hand side of \eqref{dec_eq17} it suffices as in the
proof of Theorem~\ref{thm:decomp_I} to show that $M(A_n\cap E\setminus
E')=M(A_n\cap E'\setminus E)=0$, and as argued above this follows by
faithfulness of $\tau$, if
we validate that $\mu(A_n\cap E\setminus E')=\mu(A_n\cap
E'\setminus E)=0$. But this follows exactly as in the proof of
Theorem~\ref{thm:decomp_I}.
\end{proof}

\section{Free L\'evy bases}
\label{sec:Rajput_Rosinski_theory_for_FLB}

For a Free L\'evy basis the $\boxplus$-infinite divisibility of the
marginals makes it possible to transfer major elements of the theory
of classical L\'evy bases, as developed in \cite{RR}, to the free
setting via the Bercovici-Pata bijection. Following that strategy we
record in this section some basic results on free L\'evy bases. The
starting point is the following:

\begin{theorem}\label{BP-bij_for_Levy_Bases}
Let $\CE$ be a $\delta$-ring of subsets of a non-empty set $X$.

\begin{enumerate}[i]

\item\label{BP-bij_for_Levy_Bases(i)}
For any free L\'evy basis $M=\{M(E)\mid E\in\CE\}$ (affiliated
  with some $W^*$-probability space) there exists a classical L\'evy
  basis $N=\{N(E)\mid E\in\CE\}$ defined on some probability space
  $(\Omega,\CF,P)$, such that
\begin{equation}
\Lambda(L\{N(E)\})=\spL\{M(E)\} \quad\text{for all $E$ in $\CE$.}
\label{BP-bij_for_Levy_Bases_eq1}
\end{equation}

\item\label{BP-bij_for_Levy_Bases(ii)}
For any classical L\'evy basis $N=\{N(E)\mid E\in\CE\}$
  (defined on some classical probability space) there exists a
  free L\'evy basis $M=\{M(E)\mid E\in\CE\}$ affiliated with some
  $W^*$-probability space $(\CM,\tau)$, such that  the relation
  \eqref{BP-bij_for_Levy_Bases_eq1} holds.

\end{enumerate}
\end{theorem}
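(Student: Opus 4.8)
The plan is to use the Bercovici-Pata bijection $\Lambda$ (and its inverse) to move between the classical and free characteristic triplets, appealing throughout to the general existence result Theorem~\ref{existence_FLB} to produce the desired free (respectively classical) L\'evy basis. The two directions are essentially symmetric, so I would concentrate on \ref{BP-bij_for_Levy_Bases(ii)} and indicate that \ref{BP-bij_for_Levy_Bases(i)} follows by the analogous argument with the roles of $*$ and $\boxplus$ interchanged (using the classical Kolmogorov-extension based construction in place of Theorem~\ref{existence_FLB}).

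For \ref{BP-bij_for_Levy_Bases(ii)}, given a classical L\'evy basis $N=\{N(E)\mid E\in\CE\}$, I would first set
\[
\nu(E,\cdot):=\Lambda\big(L\{N(E)\}\big), \qquad(E\in\CE).
\]
Since $L\{N(E)\}\in\ID(*)$ for each $E$, property \ref{Lambda_egnsk(i)} of $\Lambda$ guarantees that $\nu(E,\cdot)\in\ID(\boxplus)$. The crucial point is to verify the hypotheses of Theorem~\ref{existence_FLB}. For the free-convolution consistency \eqref{key_existence_eq}: if $E_1,\ldots,E_n$ are disjoint sets from $\CE$, then the classical independence and additivity of $N$ give
\[
L\{N(\textstyle{\medcup_{j=1}^n E_j})\}
=L\{N(E_1)\}*\cdots*L\{N(E_n)\},
\]
and applying $\Lambda$ together with property \ref{Lambda_egnsk(i)} yields exactly \eqref{key_existence_eq}. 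For the continuity hypothesis in \ref{existence_FLB(b)}: if $E_n\downarrow\emptyset$, then the third defining property of a classical L\'evy basis forces $L\{N(E_n)\}\xrightarrow{\rm w}\delta_0$, whence by the weak-continuity of $\Lambda$ (property \ref{Lambda_egnsk(iv)}) and $\Lambda(\delta_0)=\delta_0$ (property \ref{Lambda_egnsk(iii)}) we obtain $\nu(E_n,\cdot)\xrightarrow{\rm w}\delta_0$. Finally the $\ID(\boxplus)$-membership required in \ref{existence_FLB(c)} is already secured above. Theorem~\ref{existence_FLB}\ref{existence_FLB(c)} then delivers a free L\'evy basis $M=\{M(E)\mid E\in\CE\}$ with $\spL\{M(E)\}=\nu(E,\cdot)=\Lambda(L\{N(E)\})$, which is precisely \eqref{BP-bij_for_Levy_Bases_eq1}.

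The converse direction \ref{BP-bij_for_Levy_Bases(i)} runs along the same lines but in reverse. Starting from a free L\'evy basis $M$, I would define $\mu(E,\cdot):=\Lambda^{-1}(\spL\{M(E)\})\in\ID(*)$ and check, using that $\Lambda^{-1}$ also intertwines $\boxplus$ with $*$ (the inverse of property \ref{Lambda_egnsk(i)}) and is a weak homeomorphism, that the family $\{\mu(E,\cdot)\}$ satisfies the classical consistency and continuity conditions needed to invoke the Kolmogorov extension theorem and build a classical L\'evy basis $N$ with the stated marginals. The only genuine subtlety I anticipate is checking that the classical existence construction indeed outputs a bona fide L\'evy basis, i.e.\ that the third property (countable additivity almost surely) holds; this is the direct analog of Remark~\ref{rem:Equiv_def_of_FLB}\ref{rem:Equiv_def_of_FLB(B)} and follows from the weak-convergence hypothesis via the continuity theorem for Fourier transforms. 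The main obstacle, if any, is thus not conceptual but bookkeeping: one must ensure at each stage that the defining axioms of a (free or classical) L\'evy basis—free/classical independence on disjoint sets, finite additivity, the vanishing condition, and infinite divisibility of marginals—are all preserved under $\Lambda$ and $\Lambda^{-1}$, each of which is immediate from the four listed properties of the Bercovici-Pata bijection.
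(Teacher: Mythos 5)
Your proposal follows essentially the same route as the paper: apply $\Lambda$ (respectively $\Lambda^{-1}$) to the marginals and invoke Theorem~\ref{existence_FLB} (respectively the Kolmogorov extension theorem), checking the convolution identity and the vanishing condition via the listed properties of the Bercovici--Pata bijection. The only point to sharpen is in direction \ref{BP-bij_for_Levy_Bases(i)}: the continuity-theorem argument only yields convergence \emph{in probability} of the partial sums $\sum_{j=1}^nN(E_j)$ to $N(E)$, and the almost-sure convergence demanded by the definition of a classical L\'evy basis is then obtained from the independence of the terms (as the paper notes), not directly from the Fourier-transform argument.
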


\begin{proof} 
\begin{lenumerate}[i]

\item Consider a free L\'evy basis $M=\{M(E)\mid E\in\CE\}$
affiliated with some $W^*$-probability space $(\CM,\tau)$, and for
each $E$ in $\CE$ put
\[
\mu(E,\cdot)=\Lambda^{-1}(\spL\{M(E)\}).
\]
If $E_1,\ldots,E_n$ are disjoint sets from $\CE$ we have then that
\begin{equation}
\begin{split}
\mu\big(\textstyle{\medcup_{j=1}^n}E_j,\cdot\big)
&=\Lambda^{-1}\big(\spL\big\{M(E_1)+\cdots+M(E_n)\big\}\big)
\\
&=\Lambda^{-1}\big(\spL\{M(E_1)\}\boxplus\cdots\boxplus \spL\{N(E_n)\}\big)
\\
&=\Lambda^{-1}(\spL\{M(E_1)\})*\cdots*\Lambda^{-1}(\spL\{M(E_n)\})
\\
&=\mu(E_1,\cdot)*\cdots*\mu(E_n,\cdot).
\end{split}
\label{BP-bij_for_Levy_Bases_eq2}
\end{equation}
In addition, for any decreasing sequence $(F_n)_{n\in\N}$ from
$\CE$, such that $\medcap_{n\in\N}F_n=\emptyset$, we have that
$\spL\{M(F_n)\}\overset{\mathrm w}{\to}\delta_0$ as $n\to\infty$, and hence
by continuity of $\Lambda^{-1}$,  
\begin{equation}
\mu(F_n,\cdot)\xrightarrow[n\to\infty]{\mathrm w}\delta_0. 
\label{BP-bij_for_Levy_Bases_eq3}
\end{equation}
It follows from \eqref{BP-bij_for_Levy_Bases_eq2} and the
Kolmogorov Extension Theorem that there exists a finitely additive,
infinitely divisible random measure 
$N=\{N(E)\mid E\in\CE\}$, defined on some probability space
$(\Omega,\CF,P)$, such that $L\{N(E)\}=\mu(E,\cdot)$ for all $E$ in
$\CE$. If $(E_n)_{n\in\N}$ is a sequence of disjoint sets from $\CE$, such
that $E:=\medcup_{n\in\N}E_n\in\CE$, then
\eqref{BP-bij_for_Levy_Bases_eq3} implies that
$\sum_{j=1}^nN(E_j)\to N(E)$ in probability as $n\to\infty$. Since the
terms $N(E_1),N(E_2),N(E_3),\ldots$ are independent,
the convergence also holds almost surely. Hence $N$ is a classical
L\'evy basis.

\item Let $N=\{N(E)\mid E\in\CE\}$ be a classical L\'evy basis defined on
some probability space $(\Omega,\CF,P)$, and for any $E$ in $\CE$ put
\[
\nu(E,\cdot)=\Lambda(L\{N(E)\})\in\ID(\boxplus).
\]
Argumentation similar to that of the proof of
\ref{BP-bij_for_Levy_Bases(i)} verifies that
the family $\{\nu(E,\cdot)\mid E\in\CE\}$ satisfies
\eqref{key_existence_eq} and the conditions in \ref{existence_FLB(b)}
and \ref{existence_FLB(c)} of Theorem~\ref{existence_FLB}.
Hence that same theorem provides the
existence of a FLB with the described properties.
\end{lenumerate}
\end{proof}

Next we transfer some fundamental results
from \cite{RR} on classical L\'evy bases to corresponding results for
free L\'evy bases. In the remaining part of this section we
consider thus, as in \cite{RR}, a $\delta$-ring $\CE$ in $X$ satisfying
condition \eqref{eq_determining_sequence}.
Note that without loss of generality we may
assume that the $U_n$'s from \eqref{eq_determining_sequence}
are disjoint or increasing in $n$.

\begin{proposition}\label{prop:free_RR_1}

\begin{enumerate}[i]

\item\label{prop:free_RR_1(i)}
Let $M=\{M(E)\mid E\in\CE\}$ be a
free L\'evy basis affiliated with some $W^*$-probability space
$(\CM,\tau)$. Then there exist a signed measure $\Theta\colon\CE\to\R$, a
  finite (positive) measure $\Sigma\colon\CE\to[0,\infty)$ and a
  $\sigma$-finite (positive) measure
  $F\colon\sigma(\CE)\otimes\CB(\R)\to[0,\infty]$ such that the free
  L\'evy-Khintchine representation of $M(E)$ is given by
\begin{equation}
\CC_{M(E)}(z)=z\Theta(E)+z^2\Sigma(E)+\int_{\R}\Big(\frac{1}{1-tz}-1-z\varsigma(t)\Big)\,
F_E(\d t), \quad(z\in\C^-),
\label{prop:free_RR_1_eq1}
\end{equation}
for any $E$ in $\CE$. Here $F_E$ is the measure on $\CB(\R)$ given
by: $F_E(B)=F(E\times B)$ for any $B$ in $\CB(\R)$, and $F_E$ is a
L\'evy measure on $\R$ for all $E$ in $\CE$.

\item\label{prop:free_RR_1(ii)} 
For any triplet $(\Theta,\Sigma,F)$ of measures as described in
  \ref{prop:free_RR_1(i)}, there exists a free L\'evy basis
  $M=\{M(E)\mid E\in\CE\}$ (affiliated with some $W^*$-probability
  space), such that \eqref{prop:free_RR_1_eq1} holds.

\item\label{prop:free_RR_1(iii)} Let $M$ and $(\Theta,\Sigma,F)$ be as
  stated in \ref{prop:free_RR_1(i)}. Then there exists a unique,
  $\sigma$-finite and positive measure $\kappa$ on $\sigma(\CE)$
  with the following properties:

\begin{enumerate}[a]

\item\label{prop:free_RR_1(a)}    
$\kappa(E)=|\Theta|(E)+\Sigma(E)+\int_{\R}\min\{1,x^2\}\,F_E(\d x)$ for
all $E$ in $\CE$.

\item\label{prop:free_RR_1(b)}
If $(E_n)_{n\in\N}$ is a sequence of sets from $\CE$, such that
$\kappa(E_n)\to0$ as $n\to\infty$, then $M(E_n)\to0$ in the measure
topology.

\item\label{prop:free_RR_1(c)}
Suppose $(E_n)_{n\in\N}$ is a sequence of sets from $\CE$, such that
$M(E_n')\to0$ in the measure topology for any sequence
$(E_n')_{n\in\N}$ from
$\CE$, such that $E_n'\subseteq E_n$ for all $n$. Then
$\kappa(E_n)\to0$ as $n\to\infty$.

\end{enumerate}
\end{enumerate}
\end{proposition}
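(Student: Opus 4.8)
The plan is to derive every assertion from the corresponding result of Rajput and Rosinski \cite{RR} for classical L\'evy bases by transporting it through the Bercovici-Pata bijection $\Lambda$. The decisive feature is that $\Lambda$ is \emph{triplet-preserving}: by its very definition it sends the law in $\ID(*)$ with classical characteristic triplet $(a,b,r)$ to the law in $\ID(\boxplus)$ with the identical free characteristic triplet $(a,b,r)$. Hence, whenever $N$ is a classical L\'evy basis with $\Lambda(L\{N(E)\})=\spL\{M(E)\}$, the free characteristic triplet of $M(E)$ coincides with the classical characteristic triplet of $N(E)$ for every $E$ in $\CE$.

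For \ref{prop:free_RR_1(i)} I would first apply Theorem~\ref{BP-bij_for_Levy_Bases}\ref{BP-bij_for_Levy_Bases(i)} to produce a classical L\'evy basis $N=\{N(E)\mid E\in\CE\}$ satisfying $\Lambda(L\{N(E)\})=\spL\{M(E)\}$. The Rajput-Rosinski theory applied to $N$ then yields a signed measure $\Theta$, a finite positive measure $\Sigma$ and a $\sigma$-finite positive measure $F$ on $\sigma(\CE)\otimes\CB(\R)$ for which the classical characteristic triplet of $N(E)$ is $(\Theta(E),\Sigma(E),F_E)$, with each $F_E$ a L\'evy measure. By triplet preservation this is simultaneously the free characteristic triplet of $M(E)$, and substituting it into the free L\'evy-Khintchine representation gives \eqref{prop:free_RR_1_eq1}. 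Part \ref{prop:free_RR_1(ii)} runs in the opposite direction: from an admissible triplet $(\Theta,\Sigma,F)$ the classical theory constructs a classical L\'evy basis $N$ with characteristic triplet $(\Theta(E),\Sigma(E),F_E)$ over each $E$, and Theorem~\ref{BP-bij_for_Levy_Bases}\ref{BP-bij_for_Levy_Bases(ii)} converts $N$ into a free L\'evy basis $M$; triplet preservation again secures \eqref{prop:free_RR_1_eq1}.

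For \ref{prop:free_RR_1(iii)} I would define $\kappa$ on $\CE$ by the formula in \ref{prop:free_RR_1(a)} and recognise it as exactly the Rajput-Rosinski control measure of the associated classical basis $N$; condition \eqref{eq_determining_sequence} together with the Carath\'eodory extension theorem then provides a unique $\sigma$-finite extension to $\sigma(\CE)$, yielding \ref{prop:free_RR_1(a)} and uniqueness. Properties \ref{prop:free_RR_1(b)} and \ref{prop:free_RR_1(c)} are transferred through the chain of equivalences: $M(E_n)\to0$ in the measure topology if and only if $\spL\{M(E_n)\}\wto\delta_0$ (by \eqref{prel_eq2}), if and only if $L\{N(E_n)\}\wto\delta_0$ (since $\Lambda$ is a homeomorphism for weak convergence with $\Lambda(\delta_0)=\delta_0$), if and only if $N(E_n)\to0$ in probability (convergence in probability and in distribution coinciding when the limit is deterministic). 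As \ref{prop:free_RR_1(b)} and \ref{prop:free_RR_1(c)} are precisely the Rajput-Rosinski assertions relating $\kappa(E_n)\to0$ to convergence in probability of $N(E_n)$, respectively of all increments $N(E_n')$ with $E_n'\subseteq E_n$, the free statements follow.

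The main obstacle is concentrated in \ref{prop:free_RR_1(c)}, whose classical counterpart is the most delicate part of the Rajput-Rosinski control-measure construction, as it hinges on the hereditary ``for all subsets $E_n'\subseteq E_n$'' hypothesis. Here one must check that the subset structure in $\CE$ is transported unchanged by the bijection, so that the free hypothesis on all $M(E_n')$ genuinely converts into the classical hypothesis on all $N(E_n')$, and that the classical converse is indeed available in the present $\delta$-ring formulation satisfying \eqref{eq_determining_sequence}. The remaining verifications — triplet preservation, the weak-convergence bridge, and uniqueness of the extension — are routine given the properties of $\Lambda$ recorded in Subsection~\ref{subsec:Free_ID}.
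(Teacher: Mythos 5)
Your proposal is correct and follows essentially the same route as the paper: both parts \ref{prop:free_RR_1(i)} and \ref{prop:free_RR_1(ii)} are obtained by passing to the associated classical L\'evy basis via Theorem~\ref{BP-bij_for_Levy_Bases}, invoking Proposition~2.1 and Lemma~2.3 of \cite{RR}, and using that $\Lambda$ preserves characteristic triplets by definition, while \ref{prop:free_RR_1(iii)} is transferred through the equivalence of $M(E_n)\to0$ in the measure topology with $N(E_n)\to0$ in probability. The paper's proof is exactly this argument, stated more tersely.
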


The triplet $(\Theta,\Sigma,F)$ of measures introduced in 
Proposition~\ref{prop:free_RR_1}\ref{prop:free_RR_1(i)}
is referred to as the \emph{free characteristic triplet} of the free L\'evy
basis $M$. 
The measure $\kappa$ is referred to as the \emph{control
  measure} of $M$.
 
For the measure $F_E$ in \ref{prop:free_RR_1}\ref{prop:free_RR_1(i)} 
it follows e.g.\ by a standard extension
argument that a Borel function $f\colon\R\to\C$ is in $\CL^1(F_E)$, if
and only if $1_E\otimes f\in\CL^1(F)$, in which case
\begin{equation}
\int_{\R}f(t)\, F_E(\d t)=\int_{X\times\R}1_E(x)f(t)\, F(\d x,\d t).
\label{integralformel_1}
\end{equation}
We note also that the measure $F$ is uniquely determined on the $\sigma$-algebra
$\sigma(\CE)\otimes\CB(\R)$ by the condition: $F(E\times B)=F_E(B)$
for all $E$ in $\CE$ and $B$ in $\CB(\R)$, since this also implies that
$F(U_n\times(\R\setminus[-\frac{1}{n},\frac{1}{n}]))<\infty$ for all
$n$, because $F_{U_n}$ is a L\'evy measure. Here $(U_n)_{n\in\N}$ is the
sequence from \eqref{eq_determining_sequence}, chosen to be increasing.

\begin{proofof}[Proof of Proposition~\ref{prop:free_RR_1}.]    
\begin{lenumerate}[i]

\item
Let $N=\{N(E)\mid E\in\CE\}$ be a classical L\'evy basis
corresponding to $M$ as described in
Theorem~\ref{BP-bij_for_Levy_Bases}. Then by Proposition~2.1 and
Lemma~2.3 in \cite{RR} there exist a signed measure
$\Theta\colon\CE\to\R$, a finite measure
$\Sigma\colon\CE\to[0,\infty)$ and a $\sigma$-finite measure
$F\colon\sigma(\CE)\otimes\CB(\R)\to[0,\infty]$, such that 
\begin{equation}
C_{N(E)}(y)=\ci y\Theta(E)-\tfrac{1}{2}y^2\Sigma(E)
+\int_{\R}\big(\e^{\ci ty}-1-\ci y\varsigma(t)\big)\,F_E(\d t), \quad(y\in\R),
\label{prop:free_RR_1_eq2}
\end{equation}
for all $E$ in $\CE$. Since $\spL\{M(E)\}=\Lambda(L\{N(E)\})$ for
all $E$ in $\CE$, it follows immediately from the definition of
$\Lambda$ that $(\Theta,\Sigma,F)$ satisfies \eqref{prop:free_RR_1_eq1} as well.

\item If $(\Theta,\Sigma,F)$ is a triplet as described in
  \ref{prop:free_RR_1(i)}, then Proposition~2.1 in \cite{RR} ensures
  the existence of a classical L\'evy basis $N=\{N(E)\mid E\in\CE\}$
  such that \eqref{prop:free_RR_1_eq2} holds. Subsequently
  Theorem~\ref{BP-bij_for_Levy_Bases} provides a free L\'evy basis
  $M=\{M(E)\mid E\in\CE\}$ such that \eqref{prop:free_RR_1_eq1}
  holds. 

\item Let $N$ be as in the proof of \ref{prop:free_RR_1(i)}. It
  follows then from Proposition~2.1 in \cite{RR} that there exists a
  $\sigma$-finite measure $\kappa$ on $\sigma(\CE)$, such that 
\ref{prop:free_RR_1(a)} is satisfied, and such that
\ref{prop:free_RR_1(b)} and \ref{prop:free_RR_1(c)} hold with $M$
replaced by $N$ and convergence in the measure topology replaced by
convergence in probability. Since e.g.\ in \ref{prop:free_RR_1(b)}
$M(E_n)\to0$ in the measure topology, if and only if $N(E_n)\to0$ in
probability, it follows readily that $\kappa$ satisfies 
\ref{prop:free_RR_1(b)} and \ref{prop:free_RR_1(c)} as they stand.
\end{lenumerate}
\end{proofof}

Since the triplet $(\Theta,\Sigma,F)$ appearing in
Proposition~\ref{prop:free_RR_1}\ref{prop:free_RR_1(i)}
is obtained by application of Proposition~2.1 in \cite{RR} to a
classical L\'evy basis, it follows immediately from Lemma~2.3 of that
same paper, that there exists a mapping $\rho\colon
X\times\CB(\R)\to[0,\infty]$ with the following properties:

\begin{enumerate}[i]

\item\label{def_rho_kernel(i)} 
$\rho(x,\cdot)$ is a L\'evy measure on $\R$ for any fixed $x$ in
 $X$.

\item\label{def_rho_kernel(ii)}  
$\rho(\cdot,B)$ is a $\sigma(\CE)$-measurable function for any
  fixed Borel subset $B$ of $\R$.

\item\label{def_rho_kernel(iii)}   
For any function $h\colon X\times\R\to\C$ which is positive and
  $\sigma(\CE)\otimes\CB(\R)$-measurable or in $\CL^1(F)$ it
  holds that
\begin{equation}
\int_{X\times \R}h(x,t)\, F(\d x,\d t)=\int_X\Big(\int_\R h(x,t)\, \rho(x,\d
t)\Big)\,\kappa(\d x),
\label{integralformel_2}
\end{equation}
where $\kappa$ is the control measure introduced in 
Proposition~\ref{prop:free_RR_1}\ref{prop:free_RR_1(iii)}, and
the integral $\int_{\R}h(x,t)\,\rho(x,\d t)$ is well-defined for
$\kappa$-almost all $x$ in $X$.

\end{enumerate}
We note also that $\rho$ is unique up to $\kappa$-null-sets: If $\rho'\colon
X\times\R\to[0,\infty]$ is another mapping satisfying conditions
\ref{def_rho_kernel(i)}-\ref{def_rho_kernel(iii)}, then
$\rho(x,\cdot)=\rho'(x,\cdot)$ for $\kappa$-almost all $x$, since
$\CB(\R)$ is countably generated.

For the mapping $\rho$ we now have the following analog of
Proposition~2.4 in \cite{RR}.

\begin{proposition}\label{prop:free_RR_2}
Consider a free L\'evy basis 
$M=\{M(E)\mid E\in\CE\}$, 
and let $\Theta,\Sigma,F,\kappa$ be the associated measures described in
Proposition~\ref{prop:free_RR_1}. Furthermore let
$\rho$ be the corresponding mapping introduced above, and let $\theta$
and $\sigma^2$ denote, respectively, the Radon-Nikodym derivatives of
$\Theta$ and $\Sigma$ with respect to $\kappa$.
Then for any set $E$ from $\CE$ we have the formula:
\begin{equation*}
\CC_{M(E)}(z)=\int_{E}R(x,z)\,\kappa(\d x), \qquad(z\in\C^-),
\end{equation*}
where the kernel $R(\cdot,\cdot)$ is given by
\begin{equation*}
R(x,z)=z\theta(x)+z^2\sigma^2(x)
+\int_{\R}\Big(\frac{1}{1-tz}-1-z\varsigma(t)\Big)\,\rho(x,\d t), 
\end{equation*}
for all $z$ in $\C^-$ and $x$ in $X$.
\end{proposition}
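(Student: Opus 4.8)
The plan is to start directly from the free L\'evy--Khintchine representation \eqref{prop:free_RR_1_eq1} established in Proposition~\ref{prop:free_RR_1}\ref{prop:free_RR_1(i)}, namely
\[
\CC_{M(E)}(z) = z\Theta(E) + z^2\Sigma(E) + \int_\R\Big(\frac{1}{1-tz}-1-z\varsigma(t)\Big)\,F_E(\d t),
\]
and to rewrite each of the three terms as an integral over $E$, against the control measure $\kappa$, of the corresponding part of the kernel $R(\cdot,z)$. Summing the three resulting expressions then yields the claimed formula, as the integrands assemble precisely into $R(x,z)$.

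For the drift and semicircular terms I would first observe that property \ref{prop:free_RR_1(a)} of $\kappa$ gives $|\Theta|(E)\le\kappa(E)$ and $\Sigma(E)\le\kappa(E)$ for all $E$ in $\CE$, so both $\Theta$ and $\Sigma$ are dominated by $\kappa$. Invoking the Radon--Nikodym machinery for $\delta$-rings recalled in Subsection~\ref{subsec:delta_rings}, this is exactly what guarantees the existence of the $\sigma(\CE)$-measurable densities $\theta$ and $\sigma^2$, and yields $\Theta(E)=\int_E\theta\6\kappa$ and $\Sigma(E)=\int_E\sigma^2\6\kappa$ for every $E$ in $\CE$. Hence $z\Theta(E)=\int_E z\theta(x)\,\kappa(\d x)$ and $z^2\Sigma(E)=\int_E z^2\sigma^2(x)\,\kappa(\d x)$.

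For the L\'evy part, writing $f_z(t)=\frac{1}{1-tz}-1-z\varsigma(t)$, I would apply first \eqref{integralformel_1} to obtain $\int_\R f_z(t)\,F_E(\d t)=\int_{X\times\R}1_E(x)f_z(t)\,F(\d x,\d t)$, and then the disintegration \eqref{integralformel_2} to rewrite the latter as $\int_E\big(\int_\R f_z(t)\,\rho(x,\d t)\big)\,\kappa(\d x)$. The one point requiring genuine care is the integrability needed to invoke these two identities: since $F_E$ is a L\'evy measure, it suffices to note that for fixed $z$ in $\C^-$ one has $f_z(t)=z^2t^2+O(t^3)$ as $t\to0$ (the linear terms cancel because $\varsigma(t)=t$ on $[-1,1]$), while $f_z$ stays bounded for $|t|$ large; thus $f_z\in\CL^1(F_E)$, equivalently $1_E\otimes f_z\in\CL^1(F)$, and both \eqref{integralformel_1} and \eqref{integralformel_2} apply, the inner integral $\int_\R f_z(t)\,\rho(x,\d t)$ being well-defined for $\kappa$-almost all $x$.

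Finally I would add the three pieces; their integrands over $E$ combine to
\[
z\theta(x)+z^2\sigma^2(x)+\int_\R\Big(\frac{1}{1-tz}-1-z\varsigma(t)\Big)\,\rho(x,\d t)=R(x,z),
\]
so that $\CC_{M(E)}(z)=\int_E R(x,z)\,\kappa(\d x)$, as desired. The main (and essentially only) obstacle is the integrability verification for $f_z$ against the L\'evy measure $F_E$; everything else is bookkeeping resting on the integral identities \eqref{integralformel_1} and \eqref{integralformel_2} already established.
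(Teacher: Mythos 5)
Your proposal is correct and follows essentially the same route as the paper's proof: bound $|\Theta|,\Sigma$ by $\kappa$ to get the densities $\theta,\sigma^2$, then combine \eqref{integralformel_1} and \eqref{integralformel_2} to disintegrate the L\'evy part of \eqref{prop:free_RR_1_eq1}. Your explicit check that $t\mapsto\frac{1}{1-tz}-1-z\varsigma(t)$ is $O(t^2)$ near $0$ and bounded at infinity is a welcome elaboration of the paper's one-line appeal to the fact that the measures involved are L\'evy measures.
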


\begin{proof}
By the definition of $\kappa$ in
\ref{prop:free_RR_1}\ref{prop:free_RR_1(iii)} it is clear that
$|\Theta|,\Sigma\le\kappa$, so that $\theta$ and $\sigma^2$ are
well-defined (see Subsection~\ref{subsec:delta_rings}).
Let $E$ be a given set from $\CE$. For any fixed $z$ in $\C^-$ the function
$t\mapsto\frac{1}{1-tz}-1-z\varsigma(t)$ belongs to $\CL^1(F_E)$ and to
$\CL^1(\rho(x,\cdot))$ for all $x$ in $X$, since the considered measures are
all L\'evy measures. Combining formulae \eqref{integralformel_1} and
\eqref{integralformel_2} it follows further that
\begin{equation*}
\begin{split}
\int_E\Big(\int_{\R}\Big(\frac{1}{1-tz}-1-z\varsigma(t)\Big)\,\rho(x,\d
t)\Big)\,\kappa(\d x)
&=\int_{X\times\R}1_E(x)\Big(\frac{1}{1-tz}-1-z\varsigma(t)\Big)\, F(\d x,\d t)
\\
&=\int_{\R}\Big(\frac{1}{1-tz}-1-z\varsigma(t)\Big)\, F_E(\d t).
\end{split}
\end{equation*}
Therefore, by the definitions of $R$, $\theta$ and $\sigma^2$,
\begin{equation*}
\int_{E}R(x,z)\,\kappa(\d x)
=z\Theta(E)+z^2\Sigma(E)+\int_{\R}\Big(\frac{1}{1-tz}-1-z\varsigma(t)\Big)\,
F_E(\d t)
=\CC_{M(E)}(z),
\end{equation*}
where the last equality is \eqref{prop:free_RR_1_eq1}.
\end{proof}

\begin{remark}\label{rem:Levy_seed}
For fixed $x$ in $X$ the ``slice-function'' $R(x,\cdot)$ of the kernel
$R$ in Proposition~\ref{prop:free_RR_2} is the free cumulant transform
of a freely infinitely divisible probability measure $\nu_x$ with free
characteristic triplet $(\theta(x),\sigma^2(x),\rho(x,\cdot))$. In the 
literature on classical L\'evy bases the measure $\mu_x$ in $\ID(*)$
with classical characteristic triplet
$(\theta(x),\sigma^2(x),\rho(x,\cdot))$ is often referred to as the
\emph{L\'evy seed} at $x$ of the classical L\'evy basis $N=\{N(E)\mid
E\in\CE\}$ corresponding to $M$ as in
Theorem~\ref{BP-bij_for_Levy_Bases}. By analogy we 
refer to $\nu_x$ as the (free) \emph{L\'evy seed of $M$ at
$x$}. Proposition~\ref{prop:free_RR_2} then asserts that the
distribution of the free L\'evy basis $M$ is uniquely determined by
the family $\{\nu_x\mid x\in X\}$ of L\'evy seeds and the control
measure $\kappa$. Accordingly we refer to the quadruplet
$(\theta,\sigma^2,\rho,\kappa)$ as the \emph{free characteristic quadruplet
  of $M$}. 
We note further, that since $\Lambda(\mu_x)=\nu_x$ for all $x$
in $X$, it is apparent that the one-to-one correspondence in
Theorem~\ref{BP-bij_for_Levy_Bases} really takes place at the
infinitesimal level, i.e.\ by applying $\Lambda$ at the level of the
``infinitesimal seeds''.
\end{remark}

We consider next a fundamental class of
examples of free L\'evy bases,
namely the so-called \emph{factorizable} free L\'evy bases.

\begin{examples}\label{ex:Factorizable_FLB}
\begin{lenumerate}[1]

\item\label{ex:Factorizable_FLB(1)}
Let $\nu$ be a measure from $\ID(\boxplus)\setminus\{\delta_0\}$ with
  free characteristic 
triplet $(a,b,r)$. Let further $X$ be a non-empty set
equipped with a $\delta$-ring $\CE$, and let
$\eta\colon\CE\to[0,\infty]$ be a measure on $\CE$.
Finally put
$\CE_0=\{E\in\CE\mid \eta(E)<\infty\}$, and note that $\CE_0$ is
again a $\delta$-ring. We assume that $\CE_0$ satisfies condition
\eqref{eq_determining_sequence}. This implies in particular that
$\sigma(\CE_0)=\sigma(\CE)$, 
and that $\eta$ extends uniquely to a $\sigma$-finite measure on
$\sigma(\CE)$.

For each $E$ in $\CE_0$ we denote by $\nu(E,\cdot)$ the measure
in $\ID(\boxplus)$ with free characteristic triplet
\[
\eta(E)\cdot(a,b,r):=(\eta(E)a,\eta(E)b,\eta(E)r).
\]
If $E_1,\ldots,E_n$ are disjoint sets from $\CE_0$, then
$\nu(E_1,\cdot)\boxplus\cdots\boxplus\nu(E_n,\cdot)$ has free
characteristic triplet
\[
\big(\textstyle{\sum_{k=1}^n\eta(E_k)}\big)\cdot(a,b,r)
=\eta\big(\medcup_{k=1}^nE_k)\cdot(a,b,r),
\]
and thus equals $\nu(\medcup_{k=1}^nE_k,\cdot)$. Hence by
Theorem~\ref{existence_FLB} there exists a free L\'evy basis
$M_{(\eta,\nu)}=\{M_{(\eta,\nu)}(E)\mid E\in\CE_0\}$ such that
$\spL\{M_{(\eta,\nu)}(E)\}=\nu(E,\cdot)$ for all $E$ in $\CE_0$.
It is straightforward to check that in the considered set-up, the
triplet $(\Theta,\Sigma,F)$ for $M_{(\eta,\nu)}$ described in
Proposition~\ref{prop:free_RR_1} is given by
\[
\Theta=a\eta, \quad \Sigma=b\eta \qand F=\eta\otimes r,
\]
and consequently the control measure $\kappa$ is given by
\[
\kappa(E)=|a|\eta(E)+b\eta(E)+\eta(E)\int_{\R}\min\{1,t^2\}\,r(\d t)
=c_{\nu}\eta(E), \qquad(E\in\sigma(\CE)),
\]
where the constant $c_\nu$ is given by
\begin{equation}
c_\nu=|a|+b+\int_{\R}\min\{1,t^2\}\,r(\d t).
\label{ex:Factorizable_FLB_eq1}
\end{equation}
Note that $c_{\nu}>0$, since $\nu\ne\delta_0$. 
The L\'evy seed $\nu_x$ at a point $x$ in
$X$ (cf.\ Remark~\ref{rem:Levy_seed}) consequently
has free characteristic triplet $(\theta(x),\sigma^2(x),\rho(x,\cdot))$
given by
\[
\theta(x)=c_\nu^{-1}a, \quad \sigma^2(x)=c_{\nu}^{-1}b,
\qand \rho(x,\cdot)=c_\nu^{-1}r.
\]
In particular the L\'evy seed $\nu_x$ does not depend on $x$. We refer
to free L\'evy bases in this form as \emph{factorizable free L\'evy
  bases}.

\item\label{ex:Factorizable_FLB(2)}
In the special case where the measure $\nu$ considered in \ref{ex:Factorizable_FLB(1)} is
the standard semi-circle distribution
$\frac{1}{2\pi}\sqrt{4-x^2}1_{[-2,2]}(x)\6x$
we replace the notation $M_{(\eta,\nu)}$ by $G_\eta$ and refer
to $G_\eta$ as a \emph{semi-circular L\'evy bases.}
As the free characteristic triplet $(a,b,r)$ of $\nu$
is $(0,1,0)$ in this case, the free cumulant
transform of $G_\eta(E)$ is given by
$\CC_{G_\eta(E)}(z)=z^2\eta(E)$ for all $E$ in $\CE_0$. In other words $G_\eta(E)$ has
the semi-circle distribution
\[ 
\frac{1}{2\pi\eta(E)}\sqrt{4\eta(E)-t^2}1_{[-2\eta(E)^{1/2},2\eta(E)^{1/2}]}(t)\6t.
\]
In particular $G_{\eta}(E)$ is a bounded operator for all $E$ in $\CE_0$.
The triplet $(\Theta,\Sigma,F)$ equals $(0,\eta,0)$, and
the constant $c_\nu$ in \eqref{ex:Factorizable_FLB_eq1}
equals 1, so the characteristic quadruplet is
$(0,1,0,\eta)$. For each $x$ in $X$ the L\'evy seed $\nu_x$ is
simply $\nu$ itself.

\item\label{ex:Factorizable_FLB(3)}
In the special case where the measure $\nu$ considered in
\ref{ex:Factorizable_FLB(1)} is the free Poisson distribution
$\frac{1}{2\pi t}\sqrt{t(4-t)}1_{[0,4]}(t)\6t$ with parameter 1, we
replace the notation $M_{(\eta,\nu)}$ by $P_\eta$.

As the free characteristic triplet $(a,b,r)$ of $\nu$ is
$(1,0,\delta_1)$ in this case, 
the free cumulant transform of $P_\eta(E)$ is given by
\[
\CC_{P_\eta(E)}(z)=\eta(E)z+\eta(E)\Big(\frac{1}{1-z}-1-z\Big)
=\eta(E)\Big(\frac{1}{1-z}-1\Big), \qquad(z\in\C^-),
\] 
for all $E$ in $\CE_0$. In other words (see e.g.\ \cite[page~35]{vdn})
the spectral distribution of 
$P_{\eta}(E)$ is the free Poisson distribution $\Poiss^{\boxplus}(\eta(E))$
with parameter $\eta(E)$ given by 
\begin{equation*}
\Poiss^{\boxplus}(\eta(E))(\d t)=
\begin{cases}
(1-\eta(E))\delta_0+\frac{1}{2\pi t}\sqrt{(t-s)(u-t)}1_{[s,u]}(t)\6t,
&\text{if $\eta(E)\le1$,}
\\
\frac{1}{2\pi t}\sqrt{(t-s)(u-t)}1_{[s,u]}(t)\6t, &\text{if $\eta(E)>1$,}
\end{cases}
\end{equation*}
where $s=(1-\eta(E))^2$ and $u=(1+\eta(E))^2$. In particular
$P_{\eta}(E)$ is a bounded operator for all $E$ in $\CE_0$.
The triplet $(\Theta,\Sigma,F)$ equals $(\eta,0,\eta\otimes\delta_1)$, and the
constant $c_\nu$ in \eqref{ex:Factorizable_FLB_eq1} equals 2, so that
the characteristic quadruplet is
$(\frac{1}{2},0,\frac{1}{2}\delta_1,2\eta)$. For each $x$ in $X$ the
L\'evy seed $\nu_x$ is the Poisson distribution with parameter
$\frac{1}{2}$.

Free L\'evy bases in this form were previously considered under the
name \emph{free Poisson random measures} in \cite{BNT05}.

\end{lenumerate}
\end{examples}


We close this section by stating two propositions, both of which describe
natural and useful constructions with free L\'evy bases. As the
proofs of these propositions are rather simple, we leave them as
exercises for the interested reader.

\begin{proposition}\label{prop:transformation_of_FLB}
Let $M=\{M(E)\mid E\in\CE\}$ be a free L\'evy basis with free
characteristic triplet $(\Theta,\Sigma,F)$. Let further $\phi\colon X\to Y$
be  mapping from $X$ into a non-empty set $Y$, and define
\[
\CF^0=\{H\subseteq Y\mid \phi^{-1}(H)\in\CE\}
\]
and
\[
M\circ\phi^{-1}=\{M(\phi^{-1}(H))\mid H\in\CF^0\}.
\]
Then $\CF^0$ is a $\delta$-ring and $M\circ\phi^{-1}$ is a free L\'evy
basis. If $\CF^0$ satisfies \eqref{eq_determining_sequence}, then the
free characteristic triplet of $M\circ\phi^{-1}$ is given by
$(\Theta\circ\phi^{-1},\Sigma\circ\phi^{-1},F\circ(\phi,\id_{\R})^{-1})$,
where $\id_{\R}$ denotes the identity function on $\R$ and
$(\phi,\id_{\R})\colon X\times\R\to Y\times\R$ is the function given
by
\begin{equation*}
(\phi,\id_{\R})(x,t)=(\phi(x),t) \qquad(x\in X, \ t\in\R).
\end{equation*}
\end{proposition}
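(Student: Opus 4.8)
The plan is to treat the three assertions in turn, the first two being direct consequences of the fact that $\phi^{-1}$ commutes with all set operations, and the third resting on the uniqueness of the free L\'evy--Khintchine representation. First I would verify that $\CF^0$ is a $\delta$-ring: since $\phi^{-1}(H_1\cup H_2)=\phi^{-1}(H_1)\cup\phi^{-1}(H_2)$, $\phi^{-1}(H_1\setminus H_2)=\phi^{-1}(H_1)\setminus\phi^{-1}(H_2)$ and $\phi^{-1}(\medcap_n H_n)=\medcap_n\phi^{-1}(H_n)$, the closure of $\CE$ under finite unions, set differences and countable intersections transfers verbatim to $\CF^0$. For the second assertion I would check the four conditions of Definition~\ref{def_FLB} for $M\circ\phi^{-1}$. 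The crucial observation is that $\phi^{-1}$ preserves disjointness (if $H_i\cap H_j=\emptyset$ then $\phi^{-1}(H_i)\cap\phi^{-1}(H_j)=\phi^{-1}(H_i\cap H_j)=\emptyset$) and preserves decreasing limits to $\emptyset$ (if $H_n\downarrow\emptyset$ then $\phi^{-1}(H_n)\downarrow\emptyset$). Hence free independence \ref{def_FLB(b)}, finite additivity \ref{def_FLB(c)} and the vanishing in \ref{def_FLB(d)} for $M\circ\phi^{-1}$ all follow from the corresponding properties of $M$ applied to the sets $\phi^{-1}(H_j)\in\CE$, while \ref{def_FLB(a)} is immediate since $(M\circ\phi^{-1})(H)=M(\phi^{-1}(H))$ has spectral distribution in $\ID(\boxplus)$.

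For the third assertion, assume $\CF^0$ satisfies \eqref{eq_determining_sequence}, so that $M\circ\phi^{-1}$ has a well-defined free characteristic triplet $(\Theta',\Sigma',F')$ by Proposition~\ref{prop:free_RR_1}\ref{prop:free_RR_1(i)}. I would first confirm that the proposed triplet consists of admissible measures: that $\Theta\circ\phi^{-1}$ is a signed measure on $\CF^0$ and $\Sigma\circ\phi^{-1}$ a finite positive measure follows again from $\phi^{-1}$ respecting disjoint unions and sequences $H_n\downarrow\emptyset$. For $F\circ(\phi,\id_{\R})^{-1}$ the point to settle --- which I expect to be the only genuine subtlety --- is that $(\phi,\id_{\R})$ is measurable from $(X\times\R,\sigma(\CE)\otimes\CB(\R))$ into $(Y\times\R,\sigma(\CF^0)\otimes\CB(\R))$. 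This reduces to $\phi^{-1}(\sigma(\CF^0))\subseteq\sigma(\CE)$, which holds because $\{H\subseteq Y\mid\phi^{-1}(H)\in\sigma(\CE)\}$ is a $\sigma$-algebra on $Y$ containing $\CF^0$, hence containing $\sigma(\CF^0)$; a standard generator argument then extends measurability from rectangles to all of $\sigma(\CF^0)\otimes\CB(\R)$.

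Finally I would match the free cumulant transforms. Using the rectangle identity $(\phi,\id_{\R})^{-1}(H\times B)=\phi^{-1}(H)\times B$ one reads off $\big(F\circ(\phi,\id_{\R})^{-1}\big)_H=F_{\phi^{-1}(H)}$ for every $H\in\CF^0$, which is in particular a L\'evy measure and yields $\sigma$-finiteness of $F\circ(\phi,\id_{\R})^{-1}$ via the determining sequence of $\CF^0$. Then for each $H\in\CF^0$,
\[
\CC_{(M\circ\phi^{-1})(H)}(z)=\CC_{M(\phi^{-1}(H))}(z)
= z\,\Theta(\phi^{-1}(H))+z^2\,\Sigma(\phi^{-1}(H))+\int_{\R}\Big(\tfrac{1}{1-tz}-1-z\varsigma(t)\Big)\,F_{\phi^{-1}(H)}(\d t),
\]
by \eqref{prop:free_RR_1_eq1} applied to $M$. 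Reading off the three terms shows that the proposed triplet $(\Theta\circ\phi^{-1},\Sigma\circ\phi^{-1},F\circ(\phi,\id_{\R})^{-1})$ realizes the free L\'evy--Khintchine representation \eqref{prop:free_RR_1_eq1} of $M\circ\phi^{-1}$. Since the triplet in that representation is uniquely determined on $\CF^0$ (each marginal having a unique free characteristic triplet), and $F'$ is in turn determined on $\sigma(\CF^0)\otimes\CB(\R)$ by its rectangle values as noted after Proposition~\ref{prop:free_RR_1}, it follows that $\Theta'=\Theta\circ\phi^{-1}$, $\Sigma'=\Sigma\circ\phi^{-1}$ and $F'=F\circ(\phi,\id_{\R})^{-1}$, which completes the proof.
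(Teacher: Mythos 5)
Your proof is correct, and since the paper explicitly leaves this proposition as an exercise (``the proofs of these propositions are rather simple, we leave them as exercises''), there is no written proof to diverge from; your argument---$\phi^{-1}$ commuting with all set operations for the $\delta$-ring and FLB properties, the generator argument for measurability of $(\phi,\id_{\R})$, and the identification of the triplet via the rectangle identity $(\phi,\id_{\R})^{-1}(H\times B)=\phi^{-1}(H)\times B$ together with uniqueness of the free L\'evy--Khintchine representation---is exactly the intended one. You also correctly isolate the one genuine subtlety, namely that $F'$ is determined on all of $\sigma(\CF^0)\otimes\CB(\R)$ by its values on rectangles, which the paper justifies in the discussion following Proposition~\ref{prop:free_RR_1}.
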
  

\begin{proposition}\label{prop:concentration_of_FLB}
Let $M=\{M(E)\mid E\in\CE\}$ be a free L\'evy basis with free
characteristic triplet $(\Theta,\Sigma,F)$. Let further $A$ be a fixed
set from $\sigma(\CE)$, and define
\[
\CE^A=\{E\in\sigma(\CE)\mid A\cap E\in\CE\},
\]
and
\[
M^A(E)=M(A\cap E), \qquad(E\in\CE^A).
\]
Then $M^A$ is a new free L\'evy basis on $(X,\CE)$ with free characteristic
triplet $(\Theta^A,\Sigma^A,F^A)$ given by
\[
\Theta^A(E)=\Theta(A\cap E), \quad \Sigma^A(E)=\Sigma(A\cap E),
\qand
F^A(E\times B)=F((A\cap E)\times B)
\]
for any $E$ from $\CE$ and any Borel subset $B$ of $\R$.
\end{proposition}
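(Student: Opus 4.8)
The plan is to proceed in three steps, mirroring the structure of Lemma~\ref{lem:disjunkt_opspaltning} and exploiting throughout the hereditary property \eqref{eq_Hereditary_prop_delta_ring}, which guarantees that $A\cap E\in\CE$ for every $E$ in $\sigma(\CE)$, and in particular for every $E$ in $\CE$, so that $\CE\subseteq\CE^A\subseteq\sigma(\CE)$. First I would check that $\CE^A$ is a $\delta$-ring; second, that $M^A$ satisfies the four conditions of Definition~\ref{def_FLB} and is therefore a free L\'evy basis on $(X,\CE^A)$; and third, that its free characteristic triplet is the asserted $(\Theta^A,\Sigma^A,F^A)$. The unifying observation is that the map $E\mapsto A\cap E$ commutes with all finite and countable set operations and sends disjoint families in $\CE^A$ to disjoint families in $\CE$, so that each structural property of $M$ transfers directly to $M^A$.

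For the first two steps, closure of $\CE^A$ under unions, set-differences and countable intersections follows from the identities $A\cap(E_1\cup E_2)=(A\cap E_1)\cup(A\cap E_2)$, $A\cap(E_1\setminus E_2)=(A\cap E_1)\setminus(A\cap E_2)$ and $A\cap\medcap_n E_n=\medcap_n(A\cap E_n)$, together with the fact that $\CE$ is a $\delta$-ring and $\sigma(\CE)$ a $\sigma$-algebra. For the defining conditions of a free L\'evy basis, note that whenever $E_1,\dots,E_n$ are disjoint in $\CE^A$ the sets $A\cap E_1,\dots,A\cap E_n$ are disjoint in $\CE$; hence free independence \ref{def_FLB(b)} and finite additivity \ref{def_FLB(c)} of $M^A$ are inherited directly from the corresponding properties of $M$. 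Condition \ref{def_FLB(d)} transfers because $E_n\downarrow\emptyset$ in $\CE^A$ forces $A\cap E_n\downarrow\emptyset$ in $\CE$, so that $\spL\{M^A(E_n)\}=\spL\{M(A\cap E_n)\}\wto\delta_0$, and condition \ref{def_FLB(a)} is immediate since $\spL\{M^A(E)\}=\spL\{M(A\cap E)\}\in\ID(\boxplus)$.

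It remains to identify the triplet. Since $\CE\subseteq\CE^A\subseteq\sigma(\CE)$ we have $\sigma(\CE^A)=\sigma(\CE)$, and the determining sequence $(U_n)_{n\in\N}$ for $\CE$ from \eqref{eq_determining_sequence} also exhausts $X$ within $\CE^A$; thus Proposition~\ref{prop:free_RR_1} applies to $M^A$ and furnishes a unique free characteristic triplet. One checks that the proposed $\Theta^A$ and $\Sigma^A$ are, respectively, a signed and a finite positive measure on $\CE^A$ (countable additivity being inherited from $\Theta,\Sigma$ via the commutation of $E\mapsto A\cap E$ with disjoint unions), and that $F^A$, most cleanly described as the concentration $F^A(C)=F((A\times\R)\cap C)$ of $F$ to $A\times\R$, is a $\sigma$-finite positive measure on $\sigma(\CE^A)\otimes\CB(\R)$ satisfying $F^A(E\times B)=F((A\cap E)\times B)$; in particular $F^A_E=F_{A\cap E}$ is a L\'evy measure for every $E$ in $\CE^A$. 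Finally, using $\CC_{M^A(E)}(z)=\CC_{M(A\cap E)}(z)$ and substituting $\Theta(A\cap E)=\Theta^A(E)$, $\Sigma(A\cap E)=\Sigma^A(E)$ and $F_{A\cap E}=F^A_E$ into the representation \eqref{prop:free_RR_1_eq1} shows that the free L\'evy-Khintchine representation of $M^A(E)$ has triplet $(\Theta^A,\Sigma^A,F^A)$; uniqueness in Proposition~\ref{prop:free_RR_1} then completes the identification. The only point requiring slight care, and the closest thing to an obstacle, is verifying that the rectangle formula for $F^A$ genuinely assembles into a measure on the product $\sigma$-algebra, which is precisely what the description $F^A(\cdot)=F((A\times\R)\cap\,\cdot)$ settles at once.
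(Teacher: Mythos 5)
Your proof is correct and complete. The paper itself gives no proof of this proposition (it is explicitly left ``as an exercise for the interested reader''), and your argument is exactly the natural one: the hereditary property \eqref{eq_Hereditary_prop_delta_ring} plus the fact that $E\mapsto A\cap E$ commutes with the set operations transfers each condition of Definition~\ref{def_FLB} from $M$ to $M^A$, and the triplet is then read off from \eqref{prop:free_RR_1_eq1} applied to $M(A\cap E)$ together with uniqueness of characteristic triplets; your observation that $F^A$ is best realized as the restriction $F^A(\cdot)=F((A\times\R)\cap\,\cdot)$ correctly handles the one point (extension from rectangles to the product $\sigma$-algebra) that a careless reading might skip.
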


\section{Integration with respect to free L\'evy Bases}
\label{sec:Integration_wrt_FLB}

In this section we develop a theory of integration with respect to
free L\'evy bases in parallel to the corresponding theory for
classical L\'evy bases in \cite{RR}.
Throughout the section we consider a
$\delta$-ring in a non-empty set $X$, and we assume condition
\eqref{eq_determining_sequence}.
We consider further a free L\'evy basis $\{M(E)\mid E\in\CE\}$ affiliated with a
$W^*$-probability space $(\CM,\tau)$.
Now let $s\colon X\to\R$ be a simple $\CE$-measurable function in the form:
\begin{equation}
s=\sum_{k=1}^n\alpha_k1_{A_k},
\quad
\text{where $\alpha_1,\ldots,\alpha_n\in\R$, and $A_1,\ldots,A_n$ are
disjoint sets from $\CE$.}
\label{EFLB_6}
\end{equation}
Then for any $A$ in $\sigma(\CE)$ we define the integral $\int_As\6M$ of $s$
over $A$ with respect to $M$ by the formula:
\begin{equation}
\int_As\6M=\sum_{k=1}^n\alpha_kM(A_k\cap A).
\label{EFLB_5}
\end{equation}

\begin{remarks}\label{rem:integral_of_simple_functions}
\begin{lenumerate}

\item[(1)]
The right hand side of \eqref{EFLB_5} is well-defined, since
$A_k\cap A\in\CE$ for all $k$ (cf.\ \eqref{eq_Hereditary_prop_delta_ring}).
Denote by $\SM(\CE)$ the class of functions in the form \eqref{EFLB_6}.
Since $\CE$ is in particular stable under finite intersections, it 
follows from the definition of a free L\'evy basis and
standard argumentation that $\SM(\CE)$ is a vector space, that
the right hand side of \eqref{EFLB_5} does not depend on the choice of the
representation \eqref{EFLB_6} and that
\begin{equation*}
\int_A(\alpha s+s')\6M=\alpha\int_As\6M+\int_As'\6M
\end{equation*}
for any $s,s'$ in $\SM(\CE)$, any $A$ in $\sigma(\CE)$ and any $\alpha$ in
$\R$. The definition of a free L\'evy basis further entails that
$\spL\{\int_As\6M\}\in\ID(\boxplus)$ for any $s$ in $\SM(\CE)$ and $A$
in $\sigma(\CE)$.

\item[(2)]
 Let $N=\{N(E)\mid E\in\CE\}$ be a classical L\'evy basis
  corresponding to $M$ as in Theorem~\ref{BP-bij_for_Levy_Bases}. 
  Then for any $s$ in $\SM(\CE)$ and $A$ in $\sigma(\CE)$ the integral
  $\int_As\6N$ is defined
  in \cite{RR} exactly as above (with $M$ replaced by $N$). It
  follows then from the algebraic properties of $\Lambda$ that
\begin{equation*}
\Lambda\Big(L\Big\{\int_As\6N\Big\}\Big)=\spL\Big\{\int_As\6M\Big\}.
\end{equation*}
\end{lenumerate}
\end{remarks}

In parallel to \cite{RR} we define next the class
$\CL^1(M)$ of real valued functions on $X$ that are integrable with
respect to $M$.

\begin{definition}\label{def:M-integrable_function}
Let $f\colon X\to\R$ be a $\sigma(\CE)$-$\CB(\R)$-measurable function. Then
$f$ is called $M$-integrable, if there exists a
sequence $(s_n)_{n\in\N}$ from $\SM(\CE)$
such that the following two conditions are satisfied:

\begin{enumerate}[a]

\item $\lim_{n\to\infty}s_n=f$ almost everywhere with respect to the control
  measure for $M$.

\item For any $A$ in $\sigma(\CE)$, the sequence $(\int_As_n\6M)_{n\in\N}$
  converges in the measure topology on $(\CM,\tau)$.

\end{enumerate}
The class of $M$-integrable functions $f\colon X\to\R$ is denoted by $\CL^1(M)$.

\end{definition}

For a classical L\'evy basis $N=\{N(E)\mid E\in\CE\}$ the class of
$N$-integrable functions, here denoted by $\CL^1(N)$, was introduced
in \cite{RR} exactly as in
Definition~\ref{def:M-integrable_function}, but with $M$ replaced by $N$
and convergence in the measure topology replaced by convergence in
probability. 

For an $M$-integrable function $f$ it is natural to define the
integral $\int_Af\6M$ with respect to $M$ as the limit 
of the sequence appearing in
Definition~\ref{def:M-integrable_function}(b). We proceed next to show
that this limit does not depend on the choice of approximating
sequence satisfying conditions (a) and (b) from the afore mentioned
definition, while simultaneously establishing that
$\CL^1(M)=\CL^1(N)$, if $N$ is the classical L\'evy basis
corresponding to $M$ as in Theorem~\ref{BP-bij_for_Levy_Bases}.

\begin{proposition}\label{prop:integral_veldef}
Let $N=\{N(E)\mid
E\in\CE\}$ be a classical  L\'evy basis corresponding to $M$ as in
Theorem~\ref{BP-bij_for_Levy_Bases}. Then the following assertions hold:

\begin{enumerate}[i]

\item $\CL^1(M)=\CL^1(N)$.

\item If $f\in\CL^1(M)$ and $(s_n)_{n\in\N}$ and $(t_n)_{n\in\N}$ are two sequences
from $\SM(\CE)$, both satisfying conditions (a) and (b) of
Definition~\ref{def:M-integrable_function},
then for any $A$ in $\sigma(\CE)$ the sequences $(\int_As_n\6M)_{n\in\N}$ and
$(\int_At_n\6M$) share the same limit in the measure topology.

\end{enumerate}
\end{proposition}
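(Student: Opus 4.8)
The plan is to transfer each assertion to the classical Lévy basis $N$ by means of the Bercovici--Pata bijection $\Lambda$, using that $\Lambda$ is a homeomorphism for weak convergence (property~\ref{Lambda_egnsk(iv)}) which fixes $\delta_0$ (property~\ref{Lambda_egnsk(iii)}). The key identity is the following: for any $s,s'\in\SM(\CE)$ and any $A\in\sigma(\CE)$, linearity of the integral (Remark~\ref{rem:integral_of_simple_functions}(1)) gives $\int_A s\6M-\int_A s'\6M=\int_A(s-s')\6M$, and since $s-s'\in\SM(\CE)$, part (2) of Remark~\ref{rem:integral_of_simple_functions} yields
\[
\spL\Big\{\int_A s\6M-\int_A s'\6M\Big\}
=\Lambda\Big(L\Big\{\int_A s\6N-\int_A s'\6N\Big\}\Big).
\]
These integrals are selfadjoint, being real linear combinations of the operators $M(E)$, so by \eqref{prel_eq2} the left-hand side tends weakly to $\delta_0$ exactly when $\int_A(s-s')\6M\to0$ in the measure topology, while the law inside $\Lambda$ on the right tends weakly to $\delta_0$ exactly when $\int_A(s-s')\6N\to0$ in probability. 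Since $\Lambda$ and $\Lambda^{-1}$ are weakly continuous and $\Lambda(\delta_0)=\delta_0$, convergence of the $M$-difference to $0$ and convergence of the $N$-difference to $0$ are equivalent. This single equivalence drives the whole proof.

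For assertion (i) I would first note that the control measure $\kappa$ of $M$ from Proposition~\ref{prop:free_RR_1}\ref{prop:free_RR_1(iii)} coincides with the Rajput--Rosinski control measure of $N$, since $M$ and $N$ share the triplet $(\Theta,\Sigma,F)$; hence condition (a) of Definition~\ref{def:M-integrable_function} is literally the same requirement for $M$ and for $N$. It therefore suffices to show that for any sequence $(s_n)$ from $\SM(\CE)$ satisfying (a), condition (b) holds for $M$ if and only if it holds for $N$. Suppose (b) holds for $N$. Then $(\int_A s_n\6N)_n$ converges, hence is Cauchy, in probability, so $\int_A(s_n-s_m)\6N\to0$ in probability as $n,m\to\infty$. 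By the displayed equivalence $\int_A(s_n-s_m)\6M\to0$ in the measure topology, so $(\int_A s_n\6M)_n$ is Cauchy and thus---by completeness of the measure topology---convergent; this is (b) for $M$. The converse implication is identical, using $\Lambda^{-1}$. Consequently $\CL^1(M)=\CL^1(N)$.

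For assertion (ii), let $f\in\CL^1(M)=\CL^1(N)$ and let $(s_n)$ and $(t_n)$ both satisfy (a) and (b). By (i) they are both admissible approximating sequences for $f$ with respect to $N$, so the well-definedness of the classical integral established in \cite{RR} gives $\lim_n\int_A s_n\6N=\lim_n\int_A t_n\6N$ (both equal to $\int_A f\6N$) in probability. Hence $\int_A(s_n-t_n)\6N\to0$ in probability, and the displayed equivalence yields $\int_A(s_n-t_n)\6M\to0$ in the measure topology. Since $(\int_A s_n\6M)_n$ and $(\int_A t_n\6M)_n$ both converge by (b), their limits must coincide, as required.

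I do not expect a genuine obstacle here: the argument is a systematic transfer. The one point that must be handled with care is the reduction, via linearity, of every statement about a pair of $\SM(\CE)$-integrals to a statement about the single $\SM(\CE)$-integral of their difference---this is what makes the exact identity of Remark~\ref{rem:integral_of_simple_functions}(2), valid only for simple functions, applicable. Once that reduction is in place, the homeomorphism property of $\Lambda$ with $\Lambda(\delta_0)=\delta_0$, the characterization \eqref{prel_eq2}, and the completeness of the measure topology do all the remaining work.
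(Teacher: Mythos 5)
Your proposal is correct and follows essentially the same route as the paper: both transfer the Cauchy/convergence statements between $M$ and $N$ via the identity of Remark~\ref{rem:integral_of_simple_functions}(2) applied to differences of simple functions, using that $\Lambda$ is a weak homeomorphism fixing $\delta_0$, together with completeness of the measure topology and the classical well-definedness of $\int_A f\6N$. The only cosmetic difference is in (ii), where the paper argues via the interleaved sequence $\int_As_1\6N,\int_At_1\6N,\ldots$ being Cauchy while you pass directly to the difference $\int_A(s_n-t_n)\6N\to0$; these are equivalent.
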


\begin{proof} Let $f$ be a function from $\CL^1(M)$, and
let $(s_n)_{n\in\N}$ be a sequence from $\SM(\CE)$ satisfying
conditions (a) and (b) of Definition~\ref{def:M-integrable_function}.
Then for any $n,m$ in $\N$ it follows from (1) and (2) in
Remark~\ref{rem:integral_of_simple_functions} that
\begin{equation*}
L\Big\{\int_As_n\6N-\int_As_m\6N\Big\}
=\Lambda^{-1}\Big(\spL\Big\{\int_As_n\6M-\int_As_m\6M\Big\}\Big)
\longrightarrow\delta_0,
\quad\text{as $n,m\to\infty$,}
\end{equation*}
so that $(\int_As_n\6N)_{n\in\N}$ is a Cauchy-sequence in probability and hence
convergent in probability. Since $M$ and $N$ have the same control
measure, this verifies that $f\in\CL^1(N)$ and hence the inclusion
$\CL^1(M)\subseteq\CL^1(N)$. 
The reverse inclusion follows by similar
argumentation, applying $\Lambda$ rather than $\Lambda^{-1}$ and using
completeness of the measure topology. Hence (i) follows.

Assume next that $(t_n)_{n\in\N}$ is another sequence from
$\SM(\CE)$ satisfying conditions (a) and (b) of
Definition~\ref{def:M-integrable_function}. The same argumentation as above
then shows that the sequence
$(\int_At_n\6N)_{n\in\N}$ converges in probability as well for any $A$ in
$\sigma(\CE)$, and
it follows then from \cite{uw} that
the limit must equal that of $(\int_As_n\6N)_{n\in\N}$.
Therefore the mixed sequence 
\begin{equation*}
\int_As_1\6N,\int_At_1\6N,\int_As_2\6N,\int_At_2\6N,\ldots
\end{equation*}
is also convergent and hence a Cauchy sequence in
probability. Arguing as above the properties of $\Lambda$ entail that
the sequence
\begin{equation*}
\int_As_1\6M,\int_At_1\6M,\int_As_2\6M,\int_At_2\6M,\ldots
\end{equation*}
is then a Cauchy sequence and hence convergent in the measure
topology. Since the measure topology is Hausdorff, (ii) now follows by
sub-sequence considerations. 
\end{proof}

\begin{definition}\label{def:integral_of_L1_function}
Let $f$ be an $M$-integrable function, and let $(s_n)_{n\in\N}$ be a
sequence of functions from $\SM(\CE)$ satisfying conditions (a) and
(b) of Definition~\ref{def:M-integrable_function}. Then for any $A$ in $\sigma(\CE)$
the integral $\int_Af\6M$ of $f$ over $A$ with respect to $M$ is
defined by
\begin{equation*}
\int_Af\6M=\lim_{n\to\infty}\int_As_n\6M,
\end{equation*}
where the limit is in the measure topology.
\end{definition}

In the following remark we list next a number of rather immediate
properties of the integral introduced in the definition above.

\begin{remarks}\label{rem:properties_of_integral}

\begin{lenumerate}[1]

\item\label{rem:properties_of_integral(0)}
If $f$ is in $\CL^1(M)$ and $A\in\CE$, then $\int_Af\6M$ is a selfadjoint operator
affiliated with $(\CM,\tau)$ (because $f$ is real-valued and the
adjoint operation is continuous in the measure topology).

\item\label{rem:properties_of_integral(1)}
It follows Proposition~\ref{prop:integral_veldef}(i) that
  $\CL^1(M)$ is a vector space. This also follows directly from
  Definition~\ref{def:M-integrable_function} and the fact that the
  linear operations on $\overline{\CM}$ are continuous in the measure topology. 
This latter fact together with
Remark~\ref{rem:integral_of_simple_functions}(1) further entail that
\begin{equation*}
\int_A(\alpha f+g)\6M=\alpha\int_Af\6M+\int_Ag\6M
\end{equation*}
for any $f,g$ in $\CL^1(M)$, $A$ in $\sigma(\CE)$ and $\alpha$ in $\R$.

\item\label{rem:properties_of_integral(2)}
In Remark~\ref{rem:integral_of_simple_functions}(1) we observed
  that $\int_As\6M\in\ID(\boxplus)$ for any $s$ in $\SM(\CE)$ and
  $A$ in $\sigma(\CE)$. Since $\ID(\boxplus)$ is closed under weak convergence, and
  since convergence in the measure topology implies weak convergence
  of the spectral distributions (cf.\ \eqref{prel_eq2}),
  it follows that $\int_Af\6M\in\ID(\boxplus)$ for any $f$ in
  $\CL^1(M)$. Moreover, the continuity of $\Lambda$ in combination
  with Remark~\ref{rem:integral_of_simple_functions}(2) imply that 
\begin{equation*}
\Lambda\Big(L\Big\{\int_Af\6N\Big\}\Big)=\spL\Big\{\int_Af\6M\Big\}
\end{equation*}
for any $f$ in $\CL^1(M)=\CL^1(N)$, where $N$ is a classical L\'evy basis
corresponding to $M$ as in Theorem~\ref{BP-bij_for_Levy_Bases}.

\item\label{rem:properties_of_integral(3)}
Let $K$ be a positive integer, let $f_1,\ldots,f_K$ be
  functions from $\CL^1(M)$, and let $A_1,\ldots,A_K$ be
  \emph{disjoint} sets from $\sigma(\CE)$. Then the integrals
  $\int_{A_1}f_1\6M,\ldots,\int_{A_K}f_K\6M$ are freely independent
  operators in $\overline{\CM}$. This fact follows immediately from
  \eqref{EFLB_5} and the definition of a free L\'evy basis,
  in case $f_1,\ldots,f_K\in\SM(\CE)$. The extension to general
  functions in $\CL^1(M)$ subsequently follows directly from
  Definition~\ref{def:integral_of_L1_function}, since free
  independence is preserved under limits in the measure topology (see
  Proposition~5.4 in \cite{BNT05}). 

\item\label{rem:properties_of_integral(4)}
Since $\CL^1(M)=\CL^1(N)$ (with $N$ as in
\ref{rem:properties_of_integral(2)}), Theorem~2.7 in \cite{RR}
immediately provides the following 
characterization of $\CL^1(M)$ in terms of the characteristic quadruplet
 $(\theta,\sigma^2,\rho,\kappa)$ of $M$: A
 $\sigma(\CE)$-$\CB(\R)$-measurable function $f\colon X\to\R$ belongs to
 $\CL^1(M)$, if and only if the following three conditions are
 satisfied:

\begin{enumerate}[a]

\item $\int_X\big|
  f(x)\theta(x)+\int_{\R}\big(\varsigma(f(x)t)-f(x)\varsigma(t)\big)\,\rho(x,\d
  t)\big|\,\kappa(\d x)<\infty$.

\item $\int_Xf(x)^2\sigma^2(x)\,\kappa(\d x)<\infty$.

\item $\int_X\big(\int_{\R}\min\{1,f(x)^2t^2\}\,\rho(x,\d
  t)\big)\,\kappa(\d x)<\infty$.

\end{enumerate}

In the affirmative case it follows further from
\ref{rem:properties_of_integral(2)} and \cite[Theorem~2.7]{RR} that
the free characteristic triplet $(a_f,\sigma_f^2,F_f)$ of $\int_Xf\6M$ is given by

\begin{enumerate}[resume*]

\item\label{rem:properties_of_integral(4)_(a)} 
$a_f=\int_X\big(f(x)\theta(x)+\int_{\R}\big(\varsigma(f(x)t)-f(x)\varsigma(t)\big)\,\rho(x,\d
  t)\big)\,\kappa(\d x)$.

\item\label{rem:properties_of_integral(4)_(b)} 
$\sigma_f^2=\int_Xf(x)^2\sigma^2(x)\,\kappa(\d x)$.

\item\label{rem:properties_of_integral(4)_(c)} 
$F_f(B)=F\big(\{(x,t)\in X\times\R\mid f(x)t\in B\setminus\{0\}\}\big)$ 
for any Borel set $B$ in $\R$.

\end{enumerate}

In \ref{rem:properties_of_integral(4)_(c)} $F$ is the measure on
$\sigma(\CE)\otimes\CB(\R)$ described in
Proposition~\ref{prop:free_RR_1} (or equivalently given by
\eqref{integralformel_2}). 
For $f$ in $\CL^1(M)$ the measure $F_f$ is a L\'evy measure on $\R$, and by
e.g.\ an extension argument it follows that
\begin{equation}
\int_{\R}g(t)\, F_f(\d t)
=\int_{X\times\R}g(f(x)t)\cdot 1_{\R\setminus\{0\}}(f(x)t)\, F(\d x,\d
  t),
\label{integralformel_3}
\end{equation}
for any function $g$ in $\CL^1(F_f)$.

\end{lenumerate}
\end{remarks}

Knowing the free characteristic triplet of $\int_Xf\6M$ (as described in
Remark~\ref{rem:properties_of_integral}\ref{rem:properties_of_integral(4)}),
we can easily derive the following analog of Proposition~2.6 in
\cite{RR}, which generalizes Proposition~\ref{prop:free_RR_2} (in the present
paper) from indicator functions to general functions in
$\CL^1(M)$.

\begin{corollary}\label{cor:free_RR_3}
Let $M=\{M(E)\mid E\in\CE\}$ be a free L\'evy basis with free
characteristic quadruplet $(\theta,\sigma^2,\rho,\kappa)$, and let $f$ be
a function from $\CL^1(M)$. Consider further the kernel $R\colon X\times\C^-\to\C$
set out in Proposition~\ref{prop:free_RR_2}.
Then the function $x\mapsto R(x,zf(x))$ is in $\CL^1(\kappa)$ for all
$z$ in $\C^-$, and the free cumulant transform of 
$\int_Xf\6M$ is given by
\begin{equation}
\CC_{\int_Xf\6M}(z)=\int_{X}R(x,f(x)z)\,\kappa(\d x), \qquad(z\in\C^-).
\label{eq_cor:free_RR_3}
\end{equation}
\end{corollary}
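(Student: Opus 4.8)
The plan is to read off the free characteristic triplet $(a_f,\sigma_f^2,F_f)$ of the operator $\int_Xf\6M$ from Remark~\ref{rem:properties_of_integral}\ref{rem:properties_of_integral(4)} (recalling from item~\ref{rem:properties_of_integral(2)} of that same remark that $\int_Xf\6M\in\ID(\boxplus)$, so that its free cumulant transform is governed by the free L\'evy--Khintchine representation), to write
\[
\CC_{\int_Xf\6M}(z)=a_fz+z^2\sigma_f^2
+\int_{\R}\Big(\frac{1}{1-sz}-1-z\varsigma(s)\Big)\,F_f(\d s),
\]
and then to transform the right-hand side into $\int_XR(x,f(x)z)\,\kappa(\d x)$ using the two change-of-variable formulae \eqref{integralformel_2} and \eqref{integralformel_3}. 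In this way the whole statement reduces to a bookkeeping identity between the two integral representations of the same free cumulant transform.

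The key computational step is a pointwise regrouping of $R(x,f(x)z)$ adapted to the centering used in the definitions of $a_f$ and $F_f$. For each $x$ with $f(x)\ne0$ I would rewrite
\[
R(x,f(x)z)
=z\Big[f(x)\theta(x)+\int_{\R}\big(\varsigma(f(x)t)-f(x)\varsigma(t)\big)\,\rho(x,\d t)\Big]
+z^2f(x)^2\sigma^2(x)
+\int_{\R}\Big(\tfrac{1}{1-tf(x)z}-1-z\varsigma(f(x)t)\Big)\,\rho(x,\d t),
\]
which agrees with the defining formula for $R$ in Proposition~\ref{prop:free_RR_2} because the surplus $\varsigma$-terms cancel; here the correction $\varsigma(f(x)t)-f(x)\varsigma(t)$ vanishes for $|t|\le\min\{1,|f(x)|^{-1}\}$ and is bounded, hence $\rho(x,\cdot)$-integrable since $\rho(x,\cdot)$ is a L\'evy measure. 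Integrating over $X$, the first bracket integrates to $a_f$ (by the explicit formula for $a_f$ in Remark~\ref{rem:properties_of_integral}\ref{rem:properties_of_integral(4)}), the quadratic term integrates to $\sigma_f^2$, and the last term is handled by the substitution $s=f(x)t$: applying \eqref{integralformel_2} to $h(x,t)=\frac{1}{1-tf(x)z}-1-z\varsigma(f(x)t)$ and then \eqref{integralformel_3} identifies it with $\int_{\R}\big(\frac{1}{1-sz}-1-z\varsigma(s)\big)\,F_f(\d s)$. Summing the three contributions reproduces the L\'evy--Khintchine representation displayed above, which is the asserted formula.

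The same decomposition simultaneously delivers the integrability assertion $x\mapsto R(x,f(x)z)\in\CL^1(\kappa)$: the three summands are individually $\kappa$-integrable by, respectively, condition~(a), condition~(b), and the fact that $F_f$ is a L\'evy measure (so $h\in\CL^1(F)$, whence \eqref{integralformel_2} applied to $|h|$ bounds $\int_X|\int_{\R}h\,\rho(x,\d t)|\,\kappa(\d x)$) in Remark~\ref{rem:properties_of_integral}\ref{rem:properties_of_integral(4)}. The point to be careful about — and the reason the regrouping is essential — is that one cannot establish integrability term by term in the \emph{original} form of $R$: the drift part $x\mapsto f(x)z\theta(x)$ need not lie in $\CL^1(\kappa)$ on its own, and only becomes integrable once combined with the $\varsigma$-correction from the L\'evy integral, exactly as encoded in condition~(a). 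The residual obstacles are routine: verifying that the inner $\rho(x,\cdot)$-integrals converge for $\kappa$-almost every $x$ (built into property~\ref{def_rho_kernel(iii)} of $\rho$) and justifying the interchange of integrals through \eqref{integralformel_2} and \eqref{integralformel_3}.
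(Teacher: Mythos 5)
Your proposal is correct and follows essentially the same route as the paper: both start from the triplet $(a_f,\sigma_f^2,F_f)$ of Remark~\ref{rem:properties_of_integral}\ref{rem:properties_of_integral(4)}, apply \eqref{integralformel_3} and \eqref{integralformel_2} to the L\'evy-integral term, and regroup so that the $\varsigma$-correction terms cancel against the drift, yielding $\int_X R(x,f(x)z)\,\kappa(\d x)$. Your explicit remark that the $\kappa$-integrability only holds after the regrouping (not term by term in the original form of $R$) is a sensible point the paper leaves implicit, but it does not change the argument.
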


Recall that for fixed $x$ in $X$ the function $R(x,\cdot)$ is
the free cumulant transform of the L\'evy seed $\nu_x$ at $x$. Thus, 
from the infinitesimal point of view, \eqref{eq_cor:free_RR_3} shows
that the distribution of the integral $\int_Xf\6M$ is obtained by
scaling $\nu_x$ by $f(x)$ at each $x$,
followed by an averaging with respect to the control measure $\kappa$.

\begin{proofof}[Proof of Corollary~\ref{cor:free_RR_3}.]
Consider the free characteristic triplet $(a_f,\sigma_f^2,F_f)$ for
$\int_Xf\6M$ (given in
Remark~\ref{rem:properties_of_integral}\ref{rem:properties_of_integral(4)}).
For $z$ in $\C^-$ it follows then by
\eqref{integralformel_3} and \eqref{integralformel_2} that
\begin{equation*}
\begin{split}
\int_{\R}\Big(\frac{1}{1-tz}-1-z\varsigma(t)\Big) F_f(\d t)
&=\int_{X\times\R}\Big(\frac{1}{1-tzf(x)}-1-z\varsigma(tf(x))\Big) F(\d
x,\d t)
\\
&=\int_{X}\Big(\int_{\R}\Big(\frac{1}{1-tzf(x)}-1-z\varsigma(tf(x))\Big)\rho(x,\d
t)\Big)\kappa(\d x),
\end{split}
\end{equation*}
and consequently
\begin{equation*}
\begin{split}
&\CC_{\int_Xf\6M}(z)=za_f+z^2\sigma_f^2
+\int_{\R}\Big(\frac{1}{1-tz}-1-z\varsigma(t)\Big) F_f(\d t)
\\
&=\int_Xz\Big(f(x)\theta(x)+\int_{\R}\big(\varsigma(f(x)t)-f(x)\varsigma(t)\big)\rho(x,\d
  t)\Big)\kappa(\d x)
\\
&\phantom{ \ = \ }
+z^2\int_Xf(x)^2\sigma^2(x)\,\kappa(\d x)
+\int_{X}\Big(\int_{\R}\Big(\frac{1}{1-tzf(x)}-1-z\varsigma(tf(x))\Big)\rho(x,\d
t)\Big)\kappa(\d x),
\\
&=\int_X\Big(zf(x)\theta(x)+z^2f(x)^2\sigma^2(x)
+\int_{\R}\Big(\frac{1}{1-tzf(x)}-1-zf(x)\varsigma(t)\Big)\rho(x,\d
t)\Big)\kappa(\d x),
\\
&=\int_X R(zf(x),x)\,\kappa(\d x),
\end{split}
\end{equation*}
as desired.
\end{proofof}

\begin{proposition}\label{prop:FLB_with_density}
Let $M=\{M(E)\mid E\in\CE\}$ be a
free L\'evy basis with free characteristic triplet $(\Theta,\Sigma,F)$
and quadruplet
$(\theta,\sigma^2,\rho(\cdot,\d t),\kappa)$. 
Let further $f\colon X\to\R$ be a
$\sigma(\CE)$-measurable function, and define 
\[
\CE(f)=\{E\in\sigma(\CE)\mid f1_E\in\CL^1(M)\},
\]
and
\[
f\cdot M(E)=\int_Xf1_E\6M, \qquad(E\in\CE(f)).
\]
Then the following statements hold:

\begin{enumerate}[i]

\item
The family $\CE(f)$ is $\delta$-ring and $f\cdot M=\{f\cdot M(E)\mid E\in\CE(f)\}$ is
free L\'evy basis.

\item Suppose $\CE(f)$ satisfies \eqref{eq_determining_sequence}. Then
the free characteristic triplet $(\Theta^f,\Sigma^f,F^f)$ for $f\cdot M$ is given by

\begin{enumerate}[a]

\item
$\Theta^f(E)=\int_E\big(f(x)\theta(x)+\int_{\R}
\big(\varsigma(f(x)t)-f(x)\varsigma(t)\big)\,\rho(x,\d t)\big)\,\kappa(\d x)$,

\item
$\Sigma^f(E)=\int_Ef(x)^2\sigma^2(x)\,\kappa(\d x)$,

\item
$\displaystyle{\int_{X\times\R}g(x,t)\,F^f(\d x,\d t)=
\int_{X\times\R}g(x,tf(x))1_{\R\setminus\{0\}}(tf(x))\, F(\d x,\d t)}$,

\end{enumerate}
\noindent
where (c) holds for any $\sigma(\CE(f))\otimes\CB(\R)$-measurable function $g\colon
X\times\R\to[0,\infty)$.

\end{enumerate}
\end{proposition}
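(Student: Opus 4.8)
The plan is to reduce everything to the integration machinery already developed, in particular the characterization of $\CL^1(M)$ in Remark~\ref{rem:properties_of_integral}\ref{rem:properties_of_integral(4)} and the free cumulant transform formula of Corollary~\ref{cor:free_RR_3}. I would begin with the claim in (i) that $\CE(f)$ is a $\delta$-ring. By Remark~\ref{rem:properties_of_integral}\ref{rem:properties_of_integral(4)}, for $A$ in $\sigma(\CE)$ the function $f1_A$ lies in $\CL^1(M)$ precisely when the three integrals of conditions (a)--(c) there (with $f$ replaced by $f1_A$) are finite. Since $\varsigma(0)=0$, each of those integrands vanishes off $A$, so the conditions reduce to finiteness of integrals over $A$ of a fixed integrand (taken in absolute value in (a), and nonnegative in (b) and (c)) that does not depend on $A$. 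Monotonicity of these integrals then gives the $\delta$-ring axioms at once: if $A\subseteq A'$ with $A'\in\CE(f)$ then $A\in\CE(f)$ (covering differences $E_1\setminus E_2\subseteq E_1$ and countable intersections $\bigcap_nE_n\subseteq E_1$), while subadditivity $\int_{E_1\cup E_2}\le\int_{E_1}+\int_{E_2}$ handles finite unions.

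Next I would check the four conditions of Definition~\ref{def_FLB} for $f\cdot M$. Free independence \ref{def_FLB(b)} over disjoint sets $E_1,\dots,E_n$ from $\CE(f)$ follows from Remark~\ref{rem:properties_of_integral}\ref{rem:properties_of_integral(3)} applied to the functions $f1_{E_1},\dots,f1_{E_n}$ in $\CL^1(M)$ and the disjoint sets $E_1,\dots,E_n$, after the routine observation that $\int_Xf1_{E_j}\6M=\int_{E_j}f1_{E_j}\6M$ because $f1_{E_j}$ vanishes off $E_j$; finite additivity \ref{def_FLB(c)} follows from linearity (Remark~\ref{rem:properties_of_integral}\ref{rem:properties_of_integral(1)}) together with $f1_{\bigcup_jE_j}=\sum_jf1_{E_j}$; and $\spL\{f\cdot M(E)\}\in\ID(\boxplus)$, condition \ref{def_FLB(a)}, is part of Remark~\ref{rem:properties_of_integral}\ref{rem:properties_of_integral(2)}. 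The substantive point is the continuity condition \ref{def_FLB(d)}: for a decreasing sequence $(E_n)$ from $\CE(f)$ with $\bigcap_nE_n=\emptyset$, all $\spL\{f\cdot M(E_n)\}$ lie in $\ID(\boxplus)$, so by Proposition~\ref{prop:svag_konv_vha_fri_kum_tranf} it suffices to prove $\CC_{f\cdot M(E_n)}(\ci y)\to0$ for each $y<0$. Here Corollary~\ref{cor:free_RR_3} applied to $f1_{E_n}$ gives $\CC_{f\cdot M(E_n)}(z)=\int_{E_n}R(x,f(x)z)\,\kappa(\d x)$ (using $R(x,0)=0$), while the same corollary applied to $f1_{E_1}$ shows $x\mapsto 1_{E_1}(x)R(x,f(x)\ci y)$ lies in $\CL^1(\kappa)$; dominated convergence with this dominating function and $1_{E_n}\downarrow0$ then yields the limit.

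For part (ii), assuming now that $\CE(f)$ satisfies \eqref{eq_determining_sequence}, Proposition~\ref{prop:free_RR_1}\ref{prop:free_RR_1(i)} provides a unique free characteristic triplet $(\Theta^f,\Sigma^f,F^f)$ for $f\cdot M$, characterised by the requirement that $(\Theta^f(E),\Sigma^f(E),F^f_E)$ be the free characteristic triplet of the marginal $\spL\{f\cdot M(E)\}$ for each $E$ in $\CE(f)$. Since $f\cdot M(E)=\int_Xf1_E\6M$, this marginal triplet is exactly the one furnished by Remark~\ref{rem:properties_of_integral}\ref{rem:properties_of_integral(4)} applied to $f1_E$. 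Substituting $f1_E$ into formulae \ref{rem:properties_of_integral(4)_(a)}--\ref{rem:properties_of_integral(4)_(c)} and again using that the integrands vanish off $E$ converts the integrals over $X$ into integrals over $E$, which I would read off as the asserted (a) and (b), and (for (c)) as $F^f_E(B)=F(\{(x,t)\mid x\in E,\ f(x)t\in B\setminus\{0\}\})$; testing the prescribed $F^f$ against $g=1_E\otimes1_B$ shows this agrees with the pushforward description in (c). By the uniqueness in Proposition~\ref{prop:free_RR_1} this identifies the triplet of $f\cdot M$.

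The main obstacle is the continuity condition \ref{def_FLB(d)}, where direct estimates on the spectral distributions are awkward; the cleanest route is the one above, passing to free cumulant transforms via Corollary~\ref{cor:free_RR_3} and Proposition~\ref{prop:svag_konv_vha_fri_kum_tranf} so that the problem collapses to a routine application of dominated convergence against the control measure $\kappa$. The recurring bookkeeping point, which underlies both the $\delta$-ring property and the triplet computation, is the systematic use of $\varsigma(0)=0$ and $R(x,0)=0$ to pass from integrals over $X$ of $f1_E$ to integrals over $E$; one must also keep in mind that $f$ itself need not belong to $\CL^1(M)$, so the integration results are always invoked through the functions $f1_E$ with $E\in\CE(f)$.
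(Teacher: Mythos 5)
Your proposal is correct and follows essentially the same route as the paper's proof: part (i) via the integrability characterization in Remark~\ref{rem:properties_of_integral}\ref{rem:properties_of_integral(4)} together with items (1)--(3) of that remark, condition \ref{def_FLB(d)} via Corollary~\ref{cor:free_RR_3}, dominated convergence against $\kappa$ with the dominating function $1_{E_1}(x)R(x,zf(x))$, and Proposition~\ref{prop:svag_konv_vha_fri_kum_tranf}; and part (ii) by substituting $f1_E$ into the marginal triplet formulae and identifying $F^f$ as the pushforward of $F$ under $(x,t)\mapsto(x,tf(x))$ restricted to $X\times(\R\setminus\{0\})$. The only difference is cosmetic: you spell out the $\delta$-ring verification and the $g=1_E\otimes 1_B$ test a bit more explicitly than the paper does.
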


\begin{proof}
\begin{lenumerate}[i]

\item
According to
Remark~\ref{rem:properties_of_integral}\ref{rem:properties_of_integral(4)}
a set $E$ from $\sigma(\CE)$ belongs to $\CE(f)$, if and only if the following
three conditions are satisfied:

\begin{itemize}

\item $\int_E\big|
  f(x)\theta(x)+\int_{\R}\big(\varsigma(f(x)t)-f(x)\varsigma(t)\big)\,\rho(x,\d
  t)\big|\,\kappa(\d x)<\infty$.

\item
$\int_Ef(x)^2\sigma^2(x)\,\kappa(\d x)<\infty$.

\item
$\int_E\big(\int_{\R}\min\{1,f(x)^2t^2\}\,\rho(x,\d
  t)\big)\,\kappa(\d x)<\infty$.

\end{itemize}

\noindent
In particular it is apparent that $\CE(f)$ is a $\delta$-ring with the
hereditary property that the conditions $E_1\in\sigma(\CE)$, $E_2\in\CE(f)$ and
$E_1\subseteq E_2$ imply that $E_1\in\CE(f)$. 

The fact that $f\cdot M$ satisfies conditions
\ref{def_FLB(b)},\ref{def_FLB(c)} and \ref{def_FLB(a)} in Definition~\ref{def_FLB}
follows from, respectively, \ref{rem:properties_of_integral(3)},
\ref{rem:properties_of_integral(1)} and
\ref{rem:properties_of_integral(2)} of
Remark~\ref{rem:properties_of_integral}. To verify
condition~\ref{def_FLB(d)} in Definition~\ref{def_FLB} consider a
decreasing sequence $(E_n)_{n\in\N}$ of sets from $\CE(f)$, such that
$E_n\downarrow\emptyset$ as $n\to\infty$. It follows then from
Corollary~\ref{cor:free_RR_3} that for any $z$ in $\C^-$ the function $x\mapsto
R(x,zf(x)1_{E_1}(x))=R(x,zf(x))1_{E_1}(x)$ is in $\CL^1(\kappa)$, and
hence by dominated convergence
\[
\CC_{\int_Xf1_{E_n}\6M}(z)=\int_{E_n}R(x,zf(x))1_{E_1}(x)\,\kappa(\d x)
\xrightarrow[n\to\infty]{}0,
\]
for any $z$ in $\C^-$. By
Proposition~\ref{prop:svag_konv_vha_fri_kum_tranf}
this implies that 
$\spL\{\int_Xf1_{E_n}\6M\}\overset{\text w}{\to}\delta_0$, i.e.\ 
$\int_Xf1_{E_n}\6M\to0$ in the measure topology (cf.\ \eqref{prel_eq2}).

\item
The formulae for $\Theta^f$ and $\Sigma^f$ follow readily from
Remark~\ref{rem:properties_of_integral}\ref{rem:properties_of_integral(4)},
which also yields that
\[
F^f(E\times B)=F\big(\{(x,t)\in X\times\R\mid
1_E(x)f(x)t\in B\setminus\{0\}\}\big)
\] 
for any $E$ in $\CE(f)$ and any Borel set $B$ in $\R$. From this
condition the measure $F^f$ on
$\sigma(\CE(f))\otimes\CB(\R)$ may be identified as the concentration
to $X\times(\R\setminus\{0\})$ of
the transformation of $F$ by the mapping $\Psi\colon X\times\R\to
X\times\R$ given by 
$
\Psi(x,t)=(x,tf(x)).
$
Specifically this means that
\begin{equation*}
F^f(C)=F\big(\Psi^{-1}(C\cap(X\times(\R\setminus\{0\})))\big)
=\int_{X\times\R}1_C(x,tf(x))1_{\R\setminus\{0\}}(tf(x))\,F(\d x,\d t),
\end{equation*}
for any set $C$ in $\sigma(\CE(f))\otimes\CB(\R)$, which is in
accordance with the formula for $F^f(E\times B)=F^f_E(B)$
given above. By e.g.\ an extension argument it follows further that
\begin{equation}
\int_{X\times\R}g(x,t)\,F^f(\d x,\d t)=
\int_{X\times\R}g(x,tf(x))1_{\R\setminus\{0\}}(tf(x))\, F(\d x,\d t)
\label{FLB_with_density_eq1}
\end{equation}
for any positive and measurable function $g$.
\end{lenumerate}
\end{proof}

\section{The L\'evy-It\^o decomposition for free L\'evy bases}
\label{sec:Levy-Ito_for_FLB}

Throughout this section we consider a non-empty set $X$ equipped
with a $\delta$-ring $\CE$, which contains a sequence
$(E_n)_{n\in\N}$ of sets such that $X=\medcup_{n\in\N}E_n$. We
consider further a free L\'evy basis $M=\{M(E)\mid E\in\CE\}$
with free characteristic triplet $(\Theta,\Sigma,F)$ (in particular
$F$ is a $\sigma$-finite measure on $\sigma(\CE)\otimes\CB(\R)$).
Our objective is to establish a L\'evy-It\^o type representation of $M$. For
this we introduce first a free Poisson random measure
$P_F=\{P_F(A)\mid A\in\CA_F\}$ on $X\times\R$ affiliated with some $W^*$-probability
space $(\CM,\tau)$, and where
\[
\CA_F=\{A\in\sigma(\CE)\otimes\CB(\R)\mid F(A)<\infty\},
\]
and
\[
\spL\{P_F(A)\}=\textrm{Poiss}^{\boxplus}(F(A))
\quad\text{for all $A$ in $\CA_F$}.
\]
We note in particular that
$\sigma(\CA_F)=\sigma(\CE)\otimes\CB(\R)$, since $F$ is
$\sigma$-finite. We recall also (cf.\
Example~\ref{ex:Factorizable_FLB}\ref{ex:Factorizable_FLB(3)})  
that the free characteristic triplet and quadruplet of $P_F$ are
$(F,0,F\otimes\delta_1)$ and $(\frac{1}{2},0,\frac{1}{2}\delta_1,2F)$,
respectively.

Consider now additionally the function $h\colon X\times\R\to\R$ given by
\[
h(x,t)=t, \qquad(x\in X, \ t\in\R).
\]
By application of Proposition~\ref{prop:FLB_with_density} 
we may then consider yet
another free L\'evy basis $h\cdot P_F=\{h\cdot P_F(A)\mid
A\in\CA_F(h)\}$ on $X\times\R$, where
\[
\CA_F(h)=\{A\in\sigma(\CE)\otimes\CB(\R)\mid h1_A\in\CL^1(P_F)\},
\]
and
\[
h\cdot P_F(A)=\int_{X\times\R}1_Ah \6P_F=\int_{X\times\R}1_A(x,t)t\,
P_F(\d x,\d t), \qquad(A\in\CA(h)).
\]

\begin{lemma}\label{lemma:Prep_for_Levy_Ito}
In the framework set up above the following assertions hold:

\begin{enumerate}[i]

\item\label{lemma:Prep_for_Levy_Ito(ii)}
For any positive number $\epsilon$ we have that
\[
\{E\times(\R\setminus[-\epsilon,\epsilon])\mid E\in\CE\}\subseteq\CA_F(h).
\]
In particular $\CA_F(h)$ satisfies \eqref{eq_determining_sequence}.

\item\label{lemma:Prep_for_Levy_Ito(i)}
The free characteristic triplet for ${h\cdot P_F}$ is given by
  $(F^h,0,(F\otimes\delta_1)^h)$, where
\begin{align*}
F^h(A)&=\int_A\varsigma(t)\, F(\d x, \d t), \qquad(A\in\CA_F(h)),
\\
(F\otimes\delta_1)^h(A\times B)&=F(A\cap (X\times(B\setminus\{0\}))),
\qquad(A\in\sigma(\CE)\otimes\CB(\R), \ B\in\CB(\R)).
\end{align*}

\item\label{lemma:Prep_for_Levy_Ito(iii)}
If $E\in\CE$ such that $\int_{-1}^1 |t|\, F_E(\d t)<\infty$,
  then $E\times\R\in\CA_F(h)$.

\end{enumerate}
\end{lemma}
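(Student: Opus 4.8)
The plan is to reduce all three assertions to the integrability characterization of Remark~\ref{rem:properties_of_integral}\ref{rem:properties_of_integral(4)}, applied to the free L\'evy basis $P_F$, whose free characteristic quadruplet is $(\tfrac12,0,\tfrac12\delta_1,2F)$ (recorded above). Since the inner L\'evy measure $\tfrac12\delta_1$ is concentrated at the single point $1$, every inner integral in conditions (a)--(c) of that remark collapses to an evaluation at $s=1$, and, using that $\sigma^2\equiv0$ and $\varsigma(1)=1$, a short computation shows that for any $A$ in $\sigma(\CE)\otimes\CB(\R)$ the function $h1_A$ lies in $\CL^1(P_F)$ if and only if
\begin{equation*}
\int_A|\varsigma(t)|\,F(\d x,\d t)<\infty
\qand
\int_A\min\{1,t^2\}\,F(\d x,\d t)<\infty.
\end{equation*}
Indeed, condition (b) is vacuous as $\sigma^2\equiv0$, while the integrand of condition (a) simplifies from $\tfrac12t1_A+\tfrac12(\varsigma(t1_A)-t1_A)$ to $\tfrac12\varsigma(t)1_A$, and that of condition (c) to $\tfrac12\min\{1,t^2\}1_A$; integrating against $\kappa=2F$ yields the two displayed conditions. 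I would establish this equivalence once and invoke it throughout.

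For assertion~\ref{lemma:Prep_for_Levy_Ito(ii)} I would take $A=E\times(\R\setminus[-\epsilon,\epsilon])$ with $E\in\CE$. As $|\varsigma|\le1$ and $\min\{1,t^2\}\le1$, both displayed integrals are bounded by $F_E(\R\setminus[-\epsilon,\epsilon])$, which is finite because $F_E$ is a L\'evy measure; hence $A\in\CA_F(h)$. For the determining sequence I would use the sets $E_n\times\big(\{0\}\cup(\R\setminus[-\tfrac1m,\tfrac1m])\big)$, where $(E_n)_{n\in\N}$ is the covering sequence of $X$ assumed in this section. Each such set lies in $\CA_F(h)$ (the slice $t=0$ contributes nothing, since $\varsigma(0)=0$), and their union over $n,m$ is all of $X\times\R$. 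Here I would flag the one point easily overlooked: condition \eqref{eq_determining_sequence} is a purely set-theoretic covering requirement, so the slice $X\times\{0\}$ must be covered even though $F$ does not charge it.

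Assertion~\ref{lemma:Prep_for_Levy_Ito(i)} then follows by substituting the quadruplet $(\tfrac12,0,\tfrac12\delta_1,2F)$ of $P_F$ into the formulas of Proposition~\ref{prop:FLB_with_density}(ii); this proposition applies precisely because assertion~\ref{lemma:Prep_for_Levy_Ito(ii)} supplies condition \eqref{eq_determining_sequence} for $\CA_F(h)$. The second component vanishes since $\sigma^2\equiv0$; the first component simplifies, again via $\varsigma(1)=1$, to $\int_A\varsigma(t)\,F(\d x,\d t)$, exactly the claimed $F^h$; and the third component is the image of $F\otimes\delta_1$ under the map $((x,t),s)\mapsto((x,t),st)$ restricted to a nonzero second coordinate, which, on integrating out the atom at $s=1$, collapses to $F\big(A\cap(X\times(B\setminus\{0\}))\big)$, as claimed for $(F\otimes\delta_1)^h$.

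Finally, for assertion~\ref{lemma:Prep_for_Levy_Ito(iii)} I would take $A=E\times\R$ and check the two reduced conditions. Condition (c) gives $\int_\R\min\{1,t^2\}\,F_E(\d t)$, finite since $F_E$ is a L\'evy measure, while condition (a) gives $\int_\R|\varsigma(t)|\,F_E(\d t)=\int_{[-1,1]}|t|\,F_E(\d t)+F_E(\{|t|>1\})$, whose first term is finite by the hypothesis $\int_{-1}^1|t|\,F_E(\d t)<\infty$ and whose second term is finite by the L\'evy property. The only genuine subtlety in the lemma is isolating condition (a): the centering function $\varsigma$ is exactly what renders $h\cdot P_F$ integrable near $t=0$ without requiring $\int|t|\,F_E$ to converge, which explains why removing a neighborhood of $0$ (assertion~\ref{lemma:Prep_for_Levy_Ito(ii)}) needs no moment hypothesis whereas retaining it (assertion~\ref{lemma:Prep_for_Levy_Ito(iii)}) does.
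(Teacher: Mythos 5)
Your proposal is correct and follows essentially the same route as the paper: reduce everything to the integrability criterion of Remark~\ref{rem:properties_of_integral}\ref{rem:properties_of_integral(4)} for the quadruplet $(\tfrac12,0,\tfrac12\delta_1,2F)$ of $P_F$, which collapses to the two conditions $\int_A|\varsigma(t)|\,F<\infty$ and $\int_A\min\{1,t^2\}\,F<\infty$, and then feed the same quadruplet into Proposition~\ref{prop:FLB_with_density} for the triplet. Your explicit handling of the determining sequence --- adjoining the slice $t=0$ so that the sets actually cover $X\times\R$ --- is a small point the paper leaves implicit, and it is handled correctly.
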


\begin{proof} 
\begin{lenumerate}[i]

\item Let $E$ from $\CE$ be given. Clearly
  $E\times(\R\setminus[-\epsilon,\epsilon])
\in\sigma(\CE)\otimes\CB(\R)=\sigma(\CA_F)$. It remains thus to
verify that
$h1_{E\times(\R\setminus[-\epsilon,\epsilon])}\in\CL^1(P_F)$. Recalling
from Example~\ref{ex:Factorizable_FLB}\ref{ex:Factorizable_FLB(3)}
that the free characteristic quadruplet for $P_F$ is
$(\frac{1}{2},0,\frac{1}{2}\delta_1,2F)$,
it follows by a straightforward application of
Remark~\ref{rem:properties_of_integral}\ref{rem:properties_of_integral(4)} 
that this amounts to the conditions:
\[
\infty>\int_{E\times(\R\setminus[-\epsilon,\epsilon])}\big|\varsigma(h(x,t))\big|\, F(\d x, \d t)
=\int_{\R\setminus[-\epsilon,\epsilon]}|\varsigma(t)|\,F_E(\d t),
\]
and
\[
\infty>\int_{E\times(\R\setminus[-\epsilon,\epsilon])}
\min\{1,h(x,t)^2\}\,F(\d x,\d t)
=\int_{\R\setminus[-\epsilon,\epsilon]}\min\{1,t^2\}\,F_E(\d t).
\]
Both conditions are satisfied, since $F_E$ is a L\'evy measure.

\item
Since the free characteristic quadruplet for ${P_F}$ is
$(\frac{1}{2},0,\frac{1}{2}\delta_1,2F)$
it follows readily from
Proposition~\ref{prop:FLB_with_density} that the free characteristic
triplet for ${h\cdot P_F}$ is given as $(F^h,0,(F\times\delta_1)^h)$, where
\begin{equation*}
\begin{split}
F^h(A)&=\int_A\Big(\tfrac{1}{2}h(x,t)+\int_{\R}\big(\varsigma(h(x,t)s)
-h(x,t)\varsigma(s)\big)\,\tfrac{1}{2}\delta_1(\d s)\Big)\,2F(\d x, \d t)
\\
&=\int_A(t+\varsigma(t)-t)\, F(\d x,\d t)
=\int_A\varsigma(t)\, F(\d x,\d t)
\end{split} 
\end{equation*}
for any $A$ in $\CA_F(h)$. For $A$ in
$\sigma(\CE)\otimes\CB(\R)$ and $B$ in $\CB(\R)$ 
Proposition~\ref{prop:FLB_with_density} further yields that
\begin{equation*}
\begin{split}
(F\otimes\delta_1)^h(A\times B)
&=\int_{X\times\R\times\R}1_A(x,t)1_B(sh(x,t))1_{\R\setminus\{0\}}(sh(x,t))\,
F\otimes\delta_1(\d x, \d t, \d s)
\\
&=\int_{X\times\R}1_A(x,t)1_{B\setminus\{0\}}(t) F(\d x, \d t)
=F(A\cap(X\times(B\setminus\{0\}))).
\end{split}
\end{equation*}

\item Assume that $E\in\CE$ such that $\int_{-1}^1|t|\,F_E(\d t)<\infty$.
We must verify that $h1_{E\times\R}\in\CL^1(P_F)$, and as in the proof
of (ii) this amounts to the conditions:
\[
\infty>\int_{\R}|\varsigma(t)|\,F_E(\d t)
=F_E(\R\setminus[-1,1])+\int_{-1}^1|t|\,F_E(\d t),
\]
and
\[
\infty>\int_{\R}\min\{1,t^2\}\,F_E(\d t),
\]
which are clearly satisfied by the assumption on $E$ and
since $F_E$ is a L\'evy measure.
\end{lenumerate}
\end{proof}

\begin{proposition}\label{Free_Levy_Ito_I}
Consider the framework set up in the beginning of this section, and
assume that $\int_{-1}^1|\varsigma(t)|\,F_E(\d t)<\infty$ for all $E$ in $\CE$.
Assume further that there exists a free semi-circular L\'evy basis 
$G_{\Sigma}=\{G_{\Sigma}(E)\mid E\in\CE\}$ in $(\CM,\tau)$ with free characteristic triplet $(0,\Sigma,0)$,
which is freely independent of $P_F$. 
For each $E$ in $\CE$ put
\begin{align}
\tilde{M}(E)=
\Big(\Theta(E)-\int_{E\times\R}\varsigma(t)\, F(\d x,\d
              t)\Big)\unit_{\CM}+G_{\Sigma}(E)+{h\cdot P_F}(E\times\R)
\label{Levy_Ito_eq1}
\end{align}
where $\unit_{\CM}$ denotes the unit of $\CM$.

Then $\tilde{M}$ is a free L\'evy basis, and for any $E$ in $\CE$
the selfadjoint operators $M(E)$ and $\tilde{M}(E)$ share the same
spectral distribution.
\end{proposition}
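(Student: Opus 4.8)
The plan is to realize $\tilde M$ as a sum of three freely independent free L\'evy bases whose free characteristic triplets add up, coordinate by coordinate, to the triplet $(\Theta,\Sigma,F)$ of $M$, and then to invoke uniqueness of the free characteristic triplet to obtain the marginal identity of spectral distributions.

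First I would isolate the three summands in \eqref{Levy_Ito_eq1}. The deterministic term is $D(E):=c(E)\unit_{\CM}$ with $c(E):=\Theta(E)-\int_{E\times\R}\varsigma(t)\,F(\d x,\d t)$. Under the standing assumption $\int_{-1}^1|\varsigma(t)|\,F_E(\d t)<\infty$, and since $F_E$ is a L\'evy measure (so $F_E(\R\setminus[-1,1])<\infty$), the set function $E\mapsto\int_{E\times\R}\varsigma(t)\,F(\d x,\d t)$ is absolutely convergent and finitely additive, which together with the signed measure $\Theta$ makes $c$ a finite signed measure on $\CE$; a family of scalar multiples of $\unit_{\CM}$ indexed by a signed measure is readily checked to be a free L\'evy basis (freeness is vacuous for scalars, additivity comes from $c$, condition~\ref{def_FLB(d)} follows from $c(E_n)\to0$ for $E_n\downarrow\emptyset$, and the marginals $\delta_{c(E)}$ lie in $\ID(\boxplus)$). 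The second term $G_{\Sigma}$ is a free L\'evy basis by hypothesis. For the third term I set $J(E):=h\cdot P_F(E\times\R)$; by Lemma~\ref{lemma:Prep_for_Levy_Ito}\ref{lemma:Prep_for_Levy_Ito(iii)} combined with the standing assumption we have $E\times\R\in\CA_F(h)$ for every $E\in\CE$, so $J(E)$ is defined, and applying Proposition~\ref{prop:transformation_of_FLB} to the free L\'evy basis $h\cdot P_F$ on $(X\times\R,\CA_F(h))$ along the projection $\pi(x,t)=x$ exhibits $J$ (restricted to the sub-$\delta$-ring $\CE\subseteq\CF^0$) as a free L\'evy basis on $(X,\CE)$.

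Next I would establish mutual free independence of $D$, $G_{\Sigma}$ and $J$. The scalars $D(E)$ are freely independent of everything. Since $G_{\Sigma}$ and $P_F$ are freely independent by assumption, and since $J(E)$ is a measure-topology limit of integrals against $P_F$ and hence affiliated with $W^*(P_F)$, preservation of free independence under limits in the measure topology (as used in Remark~\ref{rem:properties_of_integral}\ref{rem:properties_of_integral(3)}) yields that $G_{\Sigma}(E)$ and $J(E)$ are freely independent. Iterating Remark~\ref{rem:Equiv_def_of_FLB}\ref{rem:Equiv_def_of_FLB(A)} then shows that $\tilde M=D+G_{\Sigma}+J$ is a free L\'evy basis, settling the first assertion.

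It remains to match triplets and conclude. For fixed $E$, the free characteristic triplet of $D(E)$ is $(c(E),0,0)$ (the free cumulant transform of $\delta_{c(E)}$ is $z\mapsto c(E)z$), that of $G_{\Sigma}(E)$ is $(0,\Sigma(E),0)$, and that of $J(E)$ is $\bigl(\int_{E\times\R}\varsigma(t)\,F(\d x,\d t),\,0,\,F_E\bigr)$, the latter read off from Lemma~\ref{lemma:Prep_for_Levy_Ito}\ref{lemma:Prep_for_Levy_Ito(i)} evaluated on $E\times\R$: the drift is $F^h(E\times\R)=\int_{E\times\R}\varsigma(t)\,F(\d x,\d t)$, while the L\'evy measure is $B\mapsto(F\otimes\delta_1)^h\bigl((E\times\R)\times B\bigr)=F_E(B\setminus\{0\})=F_E(B)$. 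Because the three operators are freely independent, $\spL\{\tilde M(E)\}$ is the free convolution of their spectral distributions, so by the linearization property \eqref{Linearizing_Property_I} the three free characteristic triplets add. The $\varsigma$-contributions in the first coordinate cancel, leaving $(\Theta(E),\Sigma(E),F_E)$, which by Proposition~\ref{prop:free_RR_1}\ref{prop:free_RR_1(i)} is exactly the free characteristic triplet of $M(E)$; uniqueness of the free characteristic triplet then forces $\spL\{\tilde M(E)\}=\spL\{M(E)\}$. The only genuinely delicate point is this drift cancellation: the constant $c(E)$ is tuned precisely so that the compensation $\int_{E\times\R}\varsigma\,F(\d x,\d t)$ built into $h\cdot P_F$ is undone and the original drift $\Theta(E)$ restored. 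Everything else is bookkeeping with the additivity of triplets under free convolution and the preparatory Lemma~\ref{lemma:Prep_for_Levy_Ito}.
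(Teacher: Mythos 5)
Your proposal is correct and follows essentially the same route as the paper's proof: both decompose $\tilde M$ into the three freely independent free L\'evy bases (drift, semicircular, and $h\cdot P_F$ pushed forward along the projection $(x,t)\mapsto x$), read off their triplets from Lemma~\ref{lemma:Prep_for_Levy_Ito}, and add them so that the $\varsigma$-compensation cancels to recover $(\Theta,\Sigma,F)$. The only difference is that you spell out a few verifications (that the drift term is a free L\'evy basis, and the free independence of $G_\Sigma(E)$ and $h\cdot P_F(E\times\R)$ via measure-topology limits) that the paper leaves implicit.
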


Before the proof we note that the assumed existence of $G_{\Sigma}$ which is
freely independent of $P_F$ can always be realized by replacing
$(\CM,\tau)$ by its free product with another $W^*$-probability space
$(\CM',\tau')$ which contains a free semicircular basis with the
specified characteristic triplet. In comparison with the classical
L\'evy-It\^o Decomposition we note also that
\[
{h\cdot P_F}(E\times\R)=\int_{E\times\R}t\, P_F(\d x,\d t),
\]
by definition of ${h\cdot P_F}$.

\begin{proof} For each $E$ in $\CE$ denote by $M_1(E), M_2(E)$ and
  $M_3(E)$ the three terms on the right hand side of
  \eqref{Levy_Ito_eq1} (in order of appearance). Note in particular
  that $M_3(E)$ is well-defined according to
  Lemma~\ref{lemma:Prep_for_Levy_Ito}\ref{lemma:Prep_for_Levy_Ito(iii)}.
  Clearly $M_1$ and
  $M_2$ are free L\'evy bases on $(X,\CE)$ with free characteristic
  triplets $(\Pi,0,0)$ and $(0,\Sigma,0)$, respectively, where we
  have introduced the signed measure
\[
\Pi(E)=\Theta(E)-\int_{E\times\R}\varsigma(t)\,F(\d x,\d t),
\qquad(E\in\CE).
\]
Considering the mapping $\psi\colon X\times\R\to X$ given by
$\psi(x,t)=x$ for all $(x,t)$ in $X\times\R$, we note next that
$M_3(E)={h\cdot P_F}(\psi^{-1}(E))$ for all $E$ in $\CE$, and therefore
Proposition~\ref{prop:transformation_of_FLB} 
yields that $M_3$ is a free L\'evy basis with free characteristic
triplet
$(F^h\circ\psi^{-1},0,(F\otimes\delta_1)^h\circ(\psi,\id_{\R})^{-1})$,
where $F^h$ and $(F\otimes\delta_1)^h$ are as set out in
Lemma~\ref{lemma:Prep_for_Levy_Ito}\ref{lemma:Prep_for_Levy_Ito(i)}.

Since $M_1,M_2,M_3$ are  freely independent, their sum,
$\tilde{M}$, is again a free L\'evy basis with free characteristic
triplet
$(\Pi+F^h\circ\psi^{-1},\Sigma,(F\otimes\delta_1)^h\circ(\psi,\id_{\R})^{-1})$.
For any set $E$ from $\CE$ note here that by
Lemma~\ref{lemma:Prep_for_Levy_Ito}\ref{lemma:Prep_for_Levy_Ito(i)}
\[
\Pi(E)+F^h\circ\psi^{-1}(E)
=\Theta(E)-\int_{E\times\R}\varsigma(t)\,F(\d x,\d t)+F^h(E\times\R)
=\Theta(E),
\]
and for any Borel subset $B$ of $\R$
\begin{equation*}
\begin{split}
(F\otimes\delta_1)^h&\circ(\psi,\id_{\R})^{-1}(E\times B)
=(F\otimes\delta_1)^h(E\times\R\times B)
=F((E\times\R)\cap(X\times(B\setminus\{0\})))
\\
&=F(E\times(B\setminus\{0\}))
=F_E(B\setminus\{0\})
=F_E(B)
=F(E\times B),
\end{split}
\end{equation*}
where we have used that $F_E(\{0\})=0$.
Since the last equation above determines $F$ uniquely on
$\sigma(\CE)\otimes\CB(\R)$, we conclude
that $(F\otimes\delta_1)^h\circ(\psi,\id_{\R})^{-1}=F$. Altogether
$\tilde{M}$ has the same free characteristic triplet as $M$, which
clearly implies that $\spL(\tilde{M}(E))=\spL(M(E))$ for all $E$ in $\CE$.
\end{proof}

We return now to the general framework set up in the beginning of this
section without imposing the condition
$\int_{\R}|\varsigma(t)|\,F_E(\d t)<\infty$.
For each $\epsilon$ in $(0,\infty)$ we then define
$M^{(\epsilon)}=\{M^{(\epsilon)}(E)\mid E\in\CE\}$ by
\begin{equation}
M^{(\epsilon)}(E)={h\cdot P_F}(E\times(\R\setminus[-\epsilon,\epsilon]))
-\Big(\int_{\R\setminus[-\epsilon,\epsilon]}\varsigma(t)\, F_E(\d
t)\Big)\unit_{\CM}, 
\qquad(E\in\CE).
\label{eg_def_M-epsilon}
\end{equation}
Lemma~\ref{lemma:Prep_for_Levy_Ito}\ref{lemma:Prep_for_Levy_Ito(ii)} 
guarantees that $M^{(\epsilon)}$ is well-defined. 
With $\psi$ as in the proof of Proposition~\ref{Free_Levy_Ito_I} note that 
$h\cdot P_f(E\times(\R\setminus[-\epsilon,\epsilon]))=h\cdot
P_f(X\times(\R\setminus[-\epsilon,\epsilon])\cap\psi^{-1}(E))$.
Hence
Propositions~\ref{prop:transformation_of_FLB}-\ref{prop:concentration_of_FLB} 
in combination with 
Lemma~\ref{lemma:Prep_for_Levy_Ito}\ref{lemma:Prep_for_Levy_Ito(i)}
entail that $M^{(\epsilon)}$ is a free L\'evy basis with free characteristic
triplet $(0,0,F^{(\epsilon)})$, where
\begin{equation}
\begin{split}
F^{(\epsilon)}(E\times B)
&=(F\otimes\delta_1)^h\big((X\times(\R\setminus[-\epsilon,\epsilon])\times\R)\cap
((\psi,\id_{\R})^{-1}(E\times B))\big)
\\
&=F\big((X\times(\R\setminus[-\epsilon,\epsilon]))\cap(E\times\R)\cap(X\times
(B\setminus\{0\}))\big)
\\
&=F\big(E\times(B\setminus[-\epsilon,\epsilon])\big)
\\
&=F_E(B\setminus[-\epsilon,\epsilon])
\end{split}
\label{Levy-Ito_eqI}
\end{equation}
for any $E$ in $\CE$ and $B$ in $\CB(\R)$.

\begin{lemma}\label{lemma:PrepII_for_Levy_Ito}
\begin{enumerate}[i]

\item With $M^{(\epsilon)}$ as defined in \eqref{eg_def_M-epsilon},
the sequence $(M^{(1/n)}(E))_{n\in\N}$ is
convergent in the measure topology for any $E$ in $\CE$.

\item If we define (limit in the measure topology)
\[
M_4(E)=\lim_{n\to\infty}M^{(1/n)}(E), \qquad(E\in\CE),
\]
then $M_4=\{M_4(E)\mid E\in\CE\}$ is a free L\'evy basis with free
characteristic triplet $(0,0,F)$.

\end{enumerate}
\end{lemma}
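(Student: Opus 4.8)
The plan is to prove (i) by showing that $(M^{(1/n)}(E))_{n\in\N}$ is Cauchy in the measure topology and invoking completeness of that topology, and then to prove (ii) by passing the defining properties of a free L\'evy basis to the limit while identifying the characteristic triplet through the spectral distributions.

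For (i), fix $E\in\CE$ and take $n>m$, so that $[-\tfrac1n,\tfrac1n]\subseteq[-\tfrac1m,\tfrac1m]$. Writing $A_k=E\times(\R\setminus[-\tfrac1k,\tfrac1k])$ and $I_{m,n}=[-\tfrac1m,\tfrac1m]\setminus[-\tfrac1n,\tfrac1n]$, we have $A_m,A_n\in\CA_F(h)$ (Lemma~\ref{lemma:Prep_for_Levy_Ito}\ref{lemma:Prep_for_Levy_Ito(ii)}) and hence $E\times I_{m,n}=A_n\setminus A_m\in\CA_F(h)$, since $\delta$-rings are closed under differences. The additivity \ref{def_FLB(c)} of the free L\'evy basis $h\cdot P_F$ applied to the disjoint decomposition $A_n=A_m\cup(E\times I_{m,n})$ then gives
\[
M^{(1/n)}(E)-M^{(1/m)}(E)=h\cdot P_F(E\times I_{m,n})-\Big(\int_{I_{m,n}}\varsigma(t)\,F_E(\d t)\Big)\unit_{\CM}.
\]
By the triplet of $h\cdot P_F$ in Lemma~\ref{lemma:Prep_for_Levy_Ito}\ref{lemma:Prep_for_Levy_Ito(i)} (exactly as in the computation \eqref{Levy-Ito_eqI} of $F^{(\epsilon)}$), this difference is selfadjoint with freely infinitely divisible spectral distribution of free characteristic triplet $(0,0,F_E|_{I_{m,n}})$, and free cumulant transform $z\mapsto\int_{I_{m,n}}\big(\tfrac{1}{1-tz}-1-z\varsigma(t)\big)\,F_E(\d t)$. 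The key estimate is that for $m\ge1$ one has $|t|\le1$ on $I_{m,n}$, whence $\varsigma(t)=t$ and $\big|\tfrac{1}{1-t\ci y}-1-\ci y t\big|=\tfrac{y^2t^2}{|1-t\ci y|}\le y^2t^2$ for $y<0$. Thus the cumulant transform at $\ci y$ is bounded in modulus by $y^2\int_{[-1/m,1/m]}t^2\,F_E(\d t)$, which tends to $0$ as $m\to\infty$ uniformly in $n$, because $F_E$ is a L\'evy measure. Applying Proposition~\ref{prop:svag_konv_vha_fri_kum_tranf} along arbitrary sequences of pairs $(n_k,m_k)$ with $m_k\to\infty$ shows $\spL\{M^{(1/n_k)}(E)-M^{(1/m_k)}(E)\}\wto\delta_0$, i.e. the difference tends to $0$ in the measure topology by \eqref{prel_eq2}; hence the sequence is Cauchy and converges by completeness of the measure topology.

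For (ii), I would first identify the spectral distribution of $M_4(E)$. Since $M^{(\epsilon)}(E)$ has triplet $(0,0,F^{(\epsilon)}_E)$ with $F^{(\epsilon)}_E(B)=F_E(B\setminus[-\epsilon,\epsilon])$, Theorem~\ref{thm:konv_vha_kar_triplet} yields $\spL\{M^{(1/n)}(E)\}\wto\nu_E$, where $\nu_E\in\ID(\boxplus)$ has triplet $(0,0,F_E)$: its conditions (i)--(iii) hold because $F^{(1/n)}_E$ and $F_E$ agree off $[-\tfrac1n,\tfrac1n]$ and $\int_{[-\epsilon,\epsilon]}t^2\,F_E(\d t)\to0$ as $\epsilon\downarrow0$. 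As $M^{(1/n)}(E)\to M_4(E)$ in the measure topology, \eqref{prel_eq2} forces $\spL\{M_4(E)\}=\nu_E$; in particular $\spL\{M_4(E)\}\in\ID(\boxplus)$ with free characteristic triplet $(0,0,F_E)$, which by uniqueness of the free L\'evy-Khintchine representation (Proposition~\ref{prop:free_RR_1}) fixes the triplet of $M_4$ as $(0,0,F)$.

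It then remains to verify the axioms \ref{def_FLB(b)}--\ref{def_FLB(d)} for $M_4$. Finite additivity \ref{def_FLB(c)} is inherited termwise: for disjoint $E_1,\dots,E_k$ the additivity of $h\cdot P_F$ and of $B\mapsto F(\cdot\times B)$ give $M^{(1/n)}(\medcup_jE_j)=\sum_jM^{(1/n)}(E_j)$, and one passes to the limit by continuity of the sum. Free independence \ref{def_FLB(b)} holds for $M^{(1/n)}(E_1),\dots,M^{(1/n)}(E_k)$ since the sets $E_j\times(\R\setminus[-\tfrac1n,\tfrac1n])$ are disjoint and scalar shifts do not affect freeness, and it is preserved under measure-topology limits (Proposition~5.4 of \cite{BNT05}, as used in Remark~\ref{rem:properties_of_integral}\ref{rem:properties_of_integral(3)}). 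Finally, for \ref{def_FLB(d)} take $E_n\downarrow\emptyset$; since $M$ itself is a free L\'evy basis it satisfies \ref{def_FLB(d)}, so Theorem~\ref{thm:konv_vha_kar_triplet} applied to $M$ gives conditions (ii) and (iii) for the triplets $(\Theta(E_n),\Sigma(E_n),F_{E_n})$, and because $\Sigma(E_n)\ge0$ dominates $\int_{[-\epsilon,\epsilon]}t^2\,F_{E_n}(\d t)$ in (iii), the same two conditions hold for the triplets $(0,0,F_{E_n})$ of $M_4(E_n)$, giving $\spL\{M_4(E_n)\}\wto\delta_0$. The main obstacle is the Cauchy estimate in (i): the set $E\times(\R\setminus\{0\})$ need not lie in $\CA_F(h)$ (this requires $\int_{-1}^1|t|\,F_E(\d t)<\infty$), so the limit cannot be read off as a single value of $h\cdot P_F$, and one must genuinely exploit the $\varsigma$-centering through the quadratic bound $\big|\tfrac{1}{1-t\ci y}-1-\ci y t\big|\le y^2t^2$ to tame the small jumps.
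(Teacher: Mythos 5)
Your proof is correct, and its overall skeleton coincides with the paper's: show the sequence is Cauchy in the (complete) measure topology by computing the difference $M^{(1/n)}(E)-M^{(1/m)}(E)$ as a compensated value of $h\cdot P_F$ over the annulus $E\times([-\tfrac1m,\tfrac1m]\setminus[-\tfrac1n,\tfrac1n])$ with characteristic triplet $(0,0,F_E|_{I_{m,n}})$, then identify the triplet of the limit via Theorem~\ref{thm:konv_vha_kar_triplet} and transfer the axioms of Definition~\ref{def_FLB} through the limit. You differ from the paper in two sub-steps, both legitimately. For the Cauchy estimate the paper verifies the two triplet-convergence conditions of Theorem~\ref{thm:konv_vha_kar_triplet} directly (the annulus eventually avoids the support of any test function vanishing near $0$, and $\int_{-\epsilon}^{\epsilon}t^2\,\varrho_k(\d t)\le\int_{-\epsilon}^{\epsilon}t^2\,F_E(\d t)$), whereas you bound the free cumulant transform pointwise by $|y|^2\int_{[-1/m,1/m]}t^2\,F_E(\d t)$ using $\big|\tfrac{1}{1-t\ci y}-1-\ci yt\big|\le y^2t^2$ on $[-1,1]$ and invoke Proposition~\ref{prop:svag_konv_vha_fri_kum_tranf}; this buys an explicit quantitative rate and makes transparent exactly how the $\varsigma$-compensation tames the small jumps, at the cost of needing the marginals to lie in $\ID(\boxplus)$ so that the proposition applies (which they do, being shifted marginals of the FLB $h\cdot P_F$). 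For axiom \ref{def_FLB(d)} the paper again checks the two conditions of Theorem~\ref{thm:konv_vha_kar_triplet} directly from $\sigma$-additivity of $F$ and $F_{E_1}$ being a L\'evy measure, whereas you deduce them from the corresponding property of $M$ itself, observing that in condition (iii) of that theorem the nonnegative term $\Sigma(E_n)$ dominates nothing away and can simply be dropped; your route is shorter and reuses the hypothesis that $M$ is a FLB, while the paper's is self-contained in the measure $F$. Your closing observation that $E\times(\R\setminus\{0\})$ need not belong to $\CA_F(h)$ — so that $M_4(E)$ cannot in general be read off as a single value of $h\cdot P_F$ — correctly identifies why the limiting procedure is needed at all.
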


\begin{proof}
\begin{lenumerate}[i]

\item
Let $E$ from $\CE$ be given.
Since the measure topology is complete, 
it suffices to show that $(M^{(1/n)}(E))_{n\in\N}$ is a Cauchy
sequence in the measure topology, i.e.\ that
$\spL\{M^{(1/n)}(E)-M^{(1/m)}(E)\}\overset{\rm w}{\to}\delta_0$ as
$n,m\to\infty$. Establishing this condition amounts to verifying that
\[
\spL\{M^{(1/n_k)}(E)-M^{(1/m_k)}(E)\}\xrightarrow[]{\text w}\delta_0
\quad\text{as $k\to\infty$}  
\]
for any sequence $(m_k,n_k)_{k\in\N}$ in $\N\times\N$,
such that $m_k\le n_k$ for all $k$, and such that $m_k\to\infty$ as
$k\to\infty$. Given such a sequence $(m_k,n_k)_{k\in\N}$, note first that
 \begin{align*}
M^{(1/n_k)}(E)&-M^{(1/m_k)}(E)
\\
&=h\cdot P_F
\big(E\times([-\tfrac{1}{m_k},-\tfrac{1}{n_k})\cup(\tfrac{1}{n_k},\tfrac{1}{m_k}])\big)
-\Big(\int_{[-\frac{1}{m_k},-\frac{1}{n_k})\cup(\frac{1}{n_k},\frac{1}{m_k}]}
\varsigma(t)\, F_E(\d t)\Big)\unit_{\CM},
\end{align*} 
and hence by Lemma~\ref{lemma:Prep_for_Levy_Ito} 
the free characteristic triplet for $M^{(1/n_k)}(E)-M^{(1/m_k)}(E)$ is
$(0,0,\varrho_k)$, where
\[
\varrho_k(B)=F_E\big(B\cap([-\tfrac{1}{m_k},-\tfrac{1}{n_k})
\cup(\tfrac{1}{n_k},\tfrac{1}{m_k}])\big)
\]
for any Borel set $B$ in $\R$.
It follows from Theorem~\ref{thm:konv_vha_kar_triplet} 
that it suffices to show that

\begin{enumerate}[1]

\item\label{PrepII_for_Levy_Ito:item(1)}
$\int_{\R}f(t)\,\varrho_k(\d t)\to0$ as $k\to\infty$ for any
  continuous bounded function $f\colon\R\to\R$, which vanishes in a
  neighborhood of 0.

\item\label{PrepII_for_Levy_Ito:item(2)}
  $\displaystyle{\lim_{\epsilon\downarrow0}\Big(\limsup_{k\to\infty}
\int_{-\epsilon}^{\epsilon}t^2\,\varrho_k(\d t)\Big)=0}$. 

\end{enumerate}

\noindent
Since $\int_{\R}f(t)\,\varrho_k(\d t)
=\int_{[-\frac{1}{m_k},-\frac{1}{n_k})\cup(\frac{1}{n_k},\frac{1}{n_k}]}f(t)
\, F_E(\d t)$,
condition \ref{PrepII_for_Levy_Ito:item(1)} follows
from the fact that for all sufficiently large $k$, the set
$[-\frac{1}{m_k},-\frac{1}{n_k})\cup(\frac{1}{n_k},\frac{1}{m_k}]$ 
is contained in the neighborhood of 0 upon which $f$ vanishes.
Condition \ref{PrepII_for_Levy_Ito:item(2)}, in turn, follows e.g.\ from the
fact that $\int_{-\epsilon}^{\epsilon}t^2\,\varrho_k(\d t)\le
\int_{-\epsilon}^{\epsilon}t^2\,F_E(\d t)$ for any $k$ in $\N$, and
here $\int_{-\epsilon}^{\epsilon}t^2\,F_E(\d t)\to0$ as
$\epsilon\downarrow0$, since $F_E$ is a L\'evy measure.

\item It follows immediately from the definition of $M_4$ that
  $\spL\{M_4(E)\}\in\ID(\boxplus)$ for all $E$ in $\CE$, since $\ID(\boxplus)$ is closed in the
  topology for weak convergence and by use of \eqref{prel_eq2}.
For each $n$ in $\N$ the operator $M^{(1/n)}(E)$ has free characteristic triplet
  $(0,0,F^{(1/n)})$, where $F^{(1/n)}$ is given by
  \eqref{Levy-Ito_eqI}. 
We check next that $M_4(E)$ has free characteristic triplet
$(0,0,F_E)$ for any $E$ in $\CE$. 
By another application of Theorem~\ref{thm:konv_vha_kar_triplet} 
this is a consequence of the following two facts:

\begin{enumerate}[1]

\item For any continuous bounded function $f\colon\R\to\R$ vanishing
  on a neighborhood of 0, it holds that
\[
\int_{\R}f \6F^{(1/n)}
=\int_{\R}f1_{[-\frac{1}{n},\frac{1}{n}]^c}\6F_E
\xrightarrow[n\to\infty]{}\int_{\R}f\6F_E,
\]
by Dominated Convergence, since $\int_{\R}|f|\6F_E<\infty$, because
$F_E$ is a L\'evy measure.

\item For any positive number $\epsilon$ we have that
\[
\int_{-\epsilon}^\epsilon t^2 \,F^{(1/n)}(\d t)
=\int_{-\epsilon}^\epsilon t^21_{[-\frac{1}{n},\frac{1}{n}]^c}(t)\, F_E(\d t)
\xrightarrow[n\to\infty]{}\int_{-\epsilon}^{\epsilon}t^2\,F_E(\d t),
\]
by Monotone Convergence. Hence
\[
\lim_{\epsilon\downarrow0}
\Big(\limsup_{n\to\infty}\int_{-\epsilon}^{\epsilon}t^2\,F^{(1/n)}(\d t)\Big)
=\lim_{\epsilon\downarrow0}\Big(\int_{-\epsilon}^{\epsilon}t^2\, F_E(\d
t)\Big)=0,
\]
since $\int_{-1}^{1}t^2\, F_E(\d t)<\infty$.
\end{enumerate}
\noindent
It remains to show that $M_4$ is a free L\'evy basis, i.e.\ to verify the four
conditions in Definition~\ref{def_FLB}:

\begin{enumerate}[a]

\item We already noted that $\spL\{M_4(E)\}\in\ID(\boxplus)$ for all
  $E$ in $\CE$.

\item If $E_1,\ldots,E_n$ are disjoint sets from $\CE$, then
  $M_4(E_1),\ldots,M_4(E_n)_{n\in\N}$ are freely independent. This follows from
  the definition of $M_4$ and the corresponding property for
  $M^{(1/n)}$, since free independence is preserved under
  limits in the measure topology (see Proposition~5.4 in \cite{BNT05}).

\item  If $E_1,\ldots,E_n$ are disjoint sets from $\CE$, then
  $M_4(\medcup_{k=1}^nE_k)=\sum_{k=1}^nM_4(E_k)$. Again this follows
  from the definition of $M_4$ and the corresponding property of $M^{(1/n)}$,
  since addition is continuous in the measure topology.

\item Let $(E_n)_{n\in\N}$ be a decreasing sequence of sets from $\CE$, such
  that $\medcap_{n\in\N}E_n=\emptyset$. Then
  $\spL\{M_4(E_n)\}\overset{\text w}{\to}\delta_0$. 
Indeed, the free characteristic triplet of $M_4(E_n)$ is
$(0,0,F_{E_n})$,
and hence by Theorem~\ref{thm:konv_vha_kar_triplet} it suffices to check
the following two conditions:

\begin{enumerate}[1]

\item $\int_{\R}f\6F_{E_n}\to0$ as $n\to\infty$ for any continuous
  bounded function $f\colon\R\to\R$ vanishing in a neighborhood, say
  $[-\epsilon,\epsilon]$, of 0. To see this, note that
\[
\Big|\int_{\R}f\6F_{E_n}\Big|\le\|f\|_{\infty}F_{E_n}([-\epsilon,\epsilon]^c)
=\|f\|_{\infty}F(E_n\times([-\epsilon,\epsilon]^c))\xrightarrow[n\to\infty]{}0,
\]
since
$F(E_1\times([-\epsilon,\epsilon]^c))=F_{E_1}([-\epsilon,\epsilon]^c)<\infty$,
and $\medcap_{n\in\N}E_n\times([-\epsilon,\epsilon]^c)=\emptyset$.

\item
  $\displaystyle{\lim_{\epsilon\downarrow0}\Big(\limsup_{n\to\infty}
\int_{-\epsilon}^{\epsilon}t^2\,F_{E_n}(\d t)\Big)=0}$. 
To see this, note for any fixed positive $\epsilon$ that
\[
\int_{-\epsilon}^{\epsilon}t^2\,F_{E_n}(\d t)
=\int_{E_n\times[-\epsilon,\epsilon]}t^2\,F(\d x,\d t)
\xrightarrow[n\to\infty]{}0,
\]
because $\int_{E_1\times[-\epsilon,\epsilon]}t^2\,F(\d x, \d t)
=\int_{-\epsilon}^{\epsilon}t^2\,F_{E_1}(\d t)<\infty$, and
$\medcap_{n\in\N}E_n\times[-\epsilon,\epsilon]=\emptyset$.

\end{enumerate}
\end{enumerate}
This completes the proof.
\end{lenumerate}
\end{proof}

As an immediate consequence of Lemma~\ref{lemma:PrepII_for_Levy_Ito}
we obtain the following general version of the L\'evy-It\^o-decomposition
for free L\'evy bases. Note that this decomposition coincides with
that of Proposition~\ref{Free_Levy_Ito_I} under the extra assumption
in that proposition.

\begin{corollary}\label{Free_Levy_Ito_II}
Consider the framework set up in the beginning of this section, and
assume further that there exists a semi-circular free L\'evy basis 
$G_{\Sigma}=\{G_{\Sigma}(E)\mid E\in\CE\}$ in $(\CM,\tau)$ with free characteristic triplet $(0,\Sigma,0)$,
which is freely independent of $P_F$.

For each $E$ in $\CE$ put
\begin{equation*}
\tilde{M}(E)=\Theta(E)\unit_{\CM}+G_{\Sigma}(E)+M_4(E),
\end{equation*}
where $\unit_{\CM}$ denotes the unit of $\CM$ and $M_4$ is introduced
in Proposition~\ref{Free_Levy_Ito_I}. 

Then $\tilde{M}$ is a free L\'evy basis, and for any $E$ in $\CE$
the selfadjoint operators $M(E)$ and $\tilde{M}(E)$ share the same
spectral distribution.
\end{corollary}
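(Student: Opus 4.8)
The plan is to realize $\tilde M$ as a sum of three mutually freely independent free L\'evy bases whose free characteristic triplets add up to $(\Theta,\Sigma,F)$, and then to appeal to the uniqueness of the free characteristic triplet to deduce that $M(E)$ and $\tilde M(E)$ have identical spectral distributions. First I would name the three summands $M_1(E)=\Theta(E)\unit_{\CM}$, $M_2(E)=G_{\Sigma}(E)$ and $M_3(E)=M_4(E)$, and identify their free characteristic triplets. The deterministic term $M_1$ has marginal $\spL\{M_1(E)\}=\delta_{\Theta(E)}$, whose free cumulant transform is $\CC_{\delta_{\Theta(E)}}(z)=\Theta(E)z$, so its triplet is $(\Theta,0,0)$; the term $M_2=G_{\Sigma}$ has triplet $(0,\Sigma,0)$ by hypothesis; and $M_3=M_4$ has triplet $(0,0,F)$ by Lemma~\ref{lemma:PrepII_for_Levy_Ito}.

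The hard part, and the step deserving the most care, is the mutual free independence of $M_1,M_2,M_3$. Since $M_1$ takes values in $\C\unit_{\CM}$, the $W^*$-algebra it generates is trivially freely independent of everything. For $M_2$ and $M_3$ I would argue as follows: each approximant $M^{(1/n)}(E)$ from \eqref{eg_def_M-epsilon} is an integral with respect to $P_F$ and is therefore affiliated with $W^*(\{P_F(A)\mid A\in\CA_F\})$, and $M_4(E)$, being the measure-topology limit of the $M^{(1/n)}(E)$, is affiliated with the same algebra because $W^*(P_F)$ is closed in the measure topology. Hence the whole family $\{M_4(E)\mid E\in\CE\}$ generates a $W^*$-subalgebra of $W^*(P_F)$, which by assumption is freely independent of $W^*(\{G_{\Sigma}(E)\mid E\in\CE\})$. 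This yields the joint free independence of the three families (the preservation of free independence under measure-topology limits being exactly Proposition~5.4 in \cite{BNT05}, already invoked in the proof of Lemma~\ref{lemma:PrepII_for_Levy_Ito}).

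With free independence established, two applications of Remark~\ref{rem:Equiv_def_of_FLB}\ref{rem:Equiv_def_of_FLB(A)} (grouping as $(M_1+M_2)+M_3$) show that $\tilde M=M_1+M_2+M_3$ is again a free L\'evy basis. Because the summands are freely independent, $\spL\{\tilde M(E)\}=\spL\{M_1(E)\}\boxplus\spL\{M_2(E)\}\boxplus\spL\{M_3(E)\}$, and since the free cumulant transform linearizes $\boxplus$ (cf.~\eqref{Linearizing_Property_I}) I get $\CC_{\tilde M(E)}=\CC_{M_1(E)}+\CC_{M_2(E)}+\CC_{M_3(E)}$. Reading the free L\'evy-Khintchine representation off this sum, and using that it depends linearly on the triplet, the free characteristic triplet of $\tilde M$ is the coordinatewise sum $(\Theta+0+0,\,0+\Sigma+0,\,0+0+F)=(\Theta,\Sigma,F)$, which is precisely the triplet of $M$.

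Finally I would invoke Proposition~\ref{prop:free_RR_1}: the triplet $(\Theta,\Sigma,F)$ determines the free cumulant transform $\CC_{M(E)}$ via \eqref{prop:free_RR_1_eq1}, so $\CC_{\tilde M(E)}=\CC_{M(E)}$ for every $E$ in $\CE$, and hence $\spL\{\tilde M(E)\}=\spL\{M(E)\}$, as claimed. Apart from the free-independence verification in the second paragraph, every step is a bookkeeping exercise in the additivity of free characteristic triplets, so I expect the affiliation-and-limit argument for $M_4$ to be the only genuine obstacle.
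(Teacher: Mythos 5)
Your proposal is correct and follows essentially the same route as the paper: the paper states this corollary as an immediate consequence of Lemma~\ref{lemma:PrepII_for_Levy_Ito}, the implicit argument being exactly the one spelled out in the proof of Proposition~\ref{Free_Levy_Ito_I} — identify the three freely independent summands as free L\'evy bases with triplets $(\Theta,0,0)$, $(0,\Sigma,0)$ and $(0,0,F)$, add the triplets, and invoke uniqueness of the free characteristic triplet. Your extra care in verifying that $M_4(E)$ is affiliated with $W^*(P_F)$ (via measure-topology limits of the $M^{(1/n)}(E)$) fills in a detail the paper leaves tacit, and is sound.
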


\section{Proof of Theorem~\ref{existence_FLB}}
\label{sec:Proof_Existence_FLB}

Throughout this section we consider a nonempty set $X$ and a ring
$\CE$ of subsets of $X$. We then put
\begin{equation*}
\II=\medcup_{k\in\N}\big\{\{E_1,\ldots,E_k\}\bigm|
E_1,\ldots,E_k\in\CE\setminus\{\emptyset\} \ \text{and
  $E_1,\ldots,E_k$ are disjoint}\big\}.
\end{equation*}
We emphasize that we consider an element of $\II$ merely as a
collection of sets, without paying attention to the order in which
these sets appear. Thus we identify an element
$\{E_1,\ldots,E_k\}$ from $\II$ with the element
$\{E_{\pi(1)},\ldots,E_{\pi(k)}\}$ for any permutation $\pi$ of $\{1,\ldots,k\}$.

We equip $\II$ with a partial order ``$\le$'' by declaring that
$\{E_1,\ldots,E_k\}\le \{F_1,\ldots, F_m\}$ exactly when each $E_i$ is a
union of some of the $F_j$'s. We note then that ``$\le$'' is an
upward-filtering order, since for $S=\{E_1,\ldots,E_k\}$ and
$T=\{F_1,\ldots,F_m\}$ from $\II$ we have that $S,T\le U$, where $U$ is
the element of $\II$ consisting of all non-empty sets in the following
family:
\begin{equation*}
E_i\cap F_j, \quad E_i\setminus(\medcup_{j=1}^mF_j), \quad
F_j\setminus(\medcup_{i=1}^kE_i), \qquad(i\in\{1,\ldots,k\}, \
j\in\{1,\ldots,m\}).
\end{equation*}
In the following we consider additionally a family $\{\nu(E,\cdot)\mid
E\in\CE\}$ of probability measures from $\ID(\boxplus)$, satisfying
that
\begin{equation*}
\nu\big(\textstyle{\medcup_{j=1}^nE_j},\cdot\big)
=\nu(E_1,\cdot)\boxplus\cdots\boxplus\nu(E_n,\cdot),
\end{equation*}
whenever $E_1,\ldots,E_n$ are disjoint sets from $\CE$. For any set
$E$ from $\CE$ we denote by $\tau_{E}$ the state on the abelian von
Neumann algebra\footnote{$L^{\infty}(\nu(E,\cdot))$ is the vector
  space of all $\nu(E,\cdot)$-essentially bounded functions $f\colon
  X\to\C$ identified up to $\nu(E,\cdot)$-null sets.}
$L^{\infty}(\nu(E,\cdot))$ given by integration with
respect to the probability measure $\nu(E,\cdot)$. Subsequently for any element
$S=\{E_1,\ldots,E_k\}$ from $\II$, we let $(\CM_S,\tau_S)$ denote the
$W^*$-reduced free product of the $W^*$-probability spaces
$(L^{\infty}(\nu(E_j,\cdot)),\tau_{E_j})$, $j=1,\ldots,k$ (see
\cite{vdn} for details).


\begin{lemma}\label{delI}
For any element $S=\{E_1,\ldots,E_k\}$ of $\II$ there exist freely independent
operators $M_S(E_1),\ldots,M_S(E_k)$ from $\olCM_S$, which generate $\CM_S$
as a von Neumann algebra\footnote{Operators $T_1,\ldots,T_k$
  affiliated with a von Neumann algebra $\CM$ are said to generate
  $\CM$ as a von Neumann algebra, if $\CM$ is the smallest von Neumann
  algebra on the considered Hilbert space containing the family
  $\medcup_{j=1}^k\{f(T_j)\mid f\in\CBF_b(\R)\}$.}, and such that
\begin{equation*}
\spL\{M(E_j)\}=\nu(E_j,\cdot), \qquad(j=1,\ldots,k).
\end{equation*}
\end{lemma}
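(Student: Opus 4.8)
The plan is to realize each $M_S(E_j)$ as a multiplication operator inside the corresponding free factor and then transport it into the free product via the canonical trace-preserving embedding. First I would work within a single factor $(L^{\infty}(\nu(E_j,\cdot)),\tau_{E_j})$, which acts by pointwise multiplication on the Hilbert space $L^2(\nu(E_j,\cdot))$. Let $X_j$ denote the (generally unbounded) operator of multiplication by the identity function $t\mapsto t$ on $L^2(\nu(E_j,\cdot))$. This $X_j$ is selfadjoint, closed and densely defined, and for any bounded Borel function $f$ the operator $f(X_j)$ is simply multiplication by $f$, which lies in $L^{\infty}(\nu(E_j,\cdot))$; hence $X_j$ is affiliated with $L^{\infty}(\nu(E_j,\cdot))$. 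Moreover
\[
\tau_{E_j}(f(X_j))=\int_{\R}f(t)\,\nu(E_j,\cdot)(\d t)
\]
for every $f$ in $\CBF_b(\R)$, so the spectral distribution of $X_j$ (relative to $\tau_{E_j}$) is exactly $\nu(E_j,\cdot)$. Since multiplication by bounded Borel functions already exhausts $L^{\infty}(\nu(E_j,\cdot))$, the operator $X_j$ generates $L^{\infty}(\nu(E_j,\cdot))$ as a von Neumann algebra.

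Next I would invoke the canonical embeddings $\iota_j\colon L^{\infty}(\nu(E_j,\cdot))\to\CM_S$ that come with the $W^*$-reduced free product, whose defining features are that $\tau_S\circ\iota_j=\tau_{E_j}$ for each $j$, that the images $\iota_j(L^{\infty}(\nu(E_j,\cdot)))$ are freely independent with respect to $\tau_S$, and that together they generate $\CM_S$. I would then let $M_S(E_j)$ be the selfadjoint operator in $\olCM_S$ obtained by transporting $X_j$ along $\iota_j$; concretely $M_S(E_j)$ is determined by $f(M_S(E_j))=\iota_j(f(X_j))$ for all $f$ in $\CBF_b(\R)$, which in particular exhibits $M_S(E_j)$ as affiliated with $\CM_S$. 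The spectral distribution then transfers verbatim:
\[
\tau_S(f(M_S(E_j)))=\tau_S(\iota_j(f(X_j)))=\tau_{E_j}(f(X_j))=\int_{\R}f(t)\,\nu(E_j,\cdot)(\d t),
\]
so $\spL\{M_S(E_j)\}=\nu(E_j,\cdot)$, as required.

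For the remaining two assertions I would observe that $W^*(M_S(E_j))=\iota_j(L^{\infty}(\nu(E_j,\cdot)))$, since $X_j$ generates the $j$-th factor and $\iota_j$ is a normal embedding. Free independence of $M_S(E_1),\ldots,M_S(E_k)$ is then immediate from the freeness of the ranges $\iota_j(L^{\infty}(\nu(E_j,\cdot)))$ together with the characterization of free independence of affiliated selfadjoint operators via their generated $W^*$-subalgebras recorded in the preliminaries. Likewise $\CM_S$ is generated as a von Neumann algebra by $\bigcup_{j=1}^k\iota_j(L^{\infty}(\nu(E_j,\cdot)))=\bigcup_{j=1}^k W^*(M_S(E_j))$, which is precisely the statement that $M_S(E_1),\ldots,M_S(E_k)$ generate $\CM_S$.

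The one genuinely delicate point is the passage to unbounded operators: one must check that transporting $X_j$ along $\iota_j$ yields a well-defined closed densely-defined selfadjoint operator in $\olCM_S$ and that the prescription $f(M_S(E_j))=\iota_j(f(X_j))$ is consistent with the bounded Borel functional calculus in $\CM_S$. This is handled by the standard correspondence between selfadjoint operators affiliated with a von Neumann algebra and their bounded Borel functional calculus, under which a normal $*$-homomorphism carries the calculus in the source to the corresponding calculus in the target, and hence carries affiliated selfadjoint operators to affiliated selfadjoint operators. I expect this bookkeeping with unbounded operators, rather than any conceptual difficulty, to be the main thing requiring care.
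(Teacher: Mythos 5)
Your proposal is correct and follows essentially the same route as the paper: the paper also takes $M_S(E_j)$ to be the image of the identity function $\id_{\R}$ (your multiplication operator $X_j$) under the extension $\oli_j$ of the canonical embedding $\iota_j$ to affiliated unbounded operators, and then reads off the spectral distribution, free independence, and generation from the defining properties of the reduced free product exactly as you do. The ``delicate point'' you flag at the end is precisely what the paper's Proposition~\ref{ext_to_unbounded_ops} supplies, so your appeal to the standard transport of Borel functional calculus along a trace-preserving normal $*$-homomorphism is the intended justification.
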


\begin{proof} For each $j$ in $\{1,\ldots,k\}$ we have a canonical embedding
$\iota_j\colon L^{\infty}(\nu(E_j,\cdot))\hookrightarrow\CM_S$, such that
$\tau_{E_j}=\tau_S\circ \iota_j$ (see \cite{vdn}). By
Proposition~\ref{ext_to_unbounded_ops} $\iota_j$ 
gives rise to a $*$-homomorphism $\oli_j\colon
\overline{L^{\infty}(\nu(E_j,\cdot))}\to\olCM_S$. We then define
\begin{equation}
M_S(E_j)=\oli_j(\id_{\R}), \qquad(j=1,\ldots,k),
\label{EFLB_eq4}
\end{equation}
where $\id_{\R}$ denotes the identity function on $\R$ considered as an
element of $\overline{L^{\infty}(\nu(E_j,\cdot))}$. By
Proposition~\ref{ext_to_unbounded_ops} the range of $\oli_j$ equals
the class of operators affiliated with the von Neumann algebra
$\iota_j(L^{\infty}(\nu(E_j,\cdot))$, so in particular $M_S(E_j)$ is
affiliated with that von Neumann algebra. By construction of
$(\CM_S,\tau_S)$ the algebras $\iota_j(L^{\infty}(\nu(E_j,\cdot))$,
$j=1,\ldots,n$, are free in $(\CM_S,\tau_S)$, so in particular
$M_S(E_1),\ldots,M_S(E_k)$ are freely independent with respect to
$\tau$. 
For any $f$ in $\CBF_b(\R)$ we note next (cf.\
Proposition~\ref{ext_to_unbounded_ops}) that 
\begin{equation*}
f(M_S(E_j))=f(\oli_j(\id_{\R}))=\oli_j(f(\id_{\R}))=\oli_j(f)=\iota_j(f),
\end{equation*}
and hence $M_S(E_j)$ generates $\iota_j(L^{\infty}(\nu(E_j,\cdot))$ as a
von Neumann algebra. This further implies that
$\{M_S(E_1),\ldots,M_S(E_n)\}$ generates $\CM_S$ as a von Neumann
algebra (cf.\ \cite[Definition~1.6.1]{vdn}). For any $j$ in
$\{1,\ldots,k\}$ and any function $f$ from $\CBF_b(\R)$ we note finally that
\begin{equation*}
\tau_S\big[f(M_S(E_j))\big]=\tau_S\big[\iota_j(f)\big]=\tau_{E_j}(f)
=\int_{\R}f(t)\,\nu(E_j,\d t),
\end{equation*}
verifying that $\spL\{M_S(E_j)\}=\nu(E_j,\cdot)$.
\end{proof}

\begin{lemma}\label{delII}
Assume that $S=\{E_1,\ldots,E_k\}$ and $T=\{F_1,\ldots,F_m\}$ are elements
of $\II$ such that $S\le T$. Then there exists a normal
$*$-homomorphism $\iota_{S,T}\colon\CM_S\to\CM_T$ such that
$\tau_S=\tau_T\circ\iota_{S,T}$.

Specifically it holds for any $i$ in $\{1,\ldots,k\}$ (with notation from
Lemma~\ref{delI}) that
\begin{equation}
\oli_{S,T}(M_S(E_i))=M_T(F_{j(i,1)})+\cdots+M_T(F_{j(i,l_i)}),
\label{EFLB_eq1}
\end{equation}
whenever $E_i=F_{j(i,1)}\cup\cdots\cup F_{j(i,l_i)}$ for suitable
$j(i,1),\ldots,j(i,l_i)$ from $\{1,\ldots,m\}$.
\end{lemma}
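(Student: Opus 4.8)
The plan is to construct $\iota_{S,T}$ through the functoriality of the reduced free product: inside $(\CM_T,\tau_T)$ I will exhibit a freely independent family of subalgebras, each trace-preservingly isomorphic to one of the free factors $L^{\infty}(\nu(E_i,\cdot))$ out of which $\CM_S$ is built, and then invoke the universal property of the reduced free product to obtain $\iota_{S,T}$; the formula \eqref{EFLB_eq1} will then fall out by tracking the generators through the extension to affiliated operators.

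First I would, for each $i\in\{1,\ldots,k\}$, fix a decomposition $E_i=F_{j(i,1)}\cup\cdots\cup F_{j(i,l_i)}$ witnessing $S\le T$ and set $J_i=\{j(i,1),\ldots,j(i,l_i)\}$. Since the $E_i$ are disjoint, the $J_i$ are pairwise disjoint subsets of $\{1,\ldots,m\}$. Define the strong sum $N_i=M_T(F_{j(i,1)})+\cdots+M_T(F_{j(i,l_i)})\in\olCM_T$. As the summands are freely independent selfadjoint operators with $\spL\{M_T(F_{j(i,l)})\}=\nu(F_{j(i,l)},\cdot)$, the operator $N_i$ is selfadjoint and affiliated with $\CM_T$ with $\spL\{N_i\}=\nu(F_{j(i,1)},\cdot)\boxplus\cdots\boxplus\nu(F_{j(i,l_i)},\cdot)$; by the standing hypothesis \eqref{key_existence_eq} on the family $\{\nu(E,\cdot)\}$ this free convolution equals $\nu(E_i,\cdot)$. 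Consequently $\rho_i\colon f\mapsto f(N_i)$ is a normal, trace-preserving $*$-homomorphism $L^{\infty}(\nu(E_i,\cdot))\to\CM_T$, because $\tau_T(f(N_i))=\int_{\R}f\,\d\spL\{N_i\}=\int_{\R}f\,\d\nu(E_i,\cdot)=\tau_{E_i}(f)$ for all $f$ in $\CBF_b(\R)$.

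Next I would verify that the ranges $\rho_i(L^{\infty}(\nu(E_i,\cdot)))=W^*(N_i)$, $i=1,\ldots,k$, are freely independent in $(\CM_T,\tau_T)$. Each $W^*(N_i)$ is contained in the $W^*$-subalgebra of $\CM_T$ generated by $\{M_T(F_j)\mid j\in J_i\}$, and since the generators $M_T(F_1),\ldots,M_T(F_m)$ are freely independent and the $J_i$ are disjoint, these subalgebras, hence their subalgebras $W^*(N_i)$, are freely independent (cf.\ \cite[Proposition~2.5.5]{vdn}), exactly as in the proof of Lemma~\ref{lem:disjunkt_opspaltning}. With the $\rho_i$ trace-preserving and their ranges free, the functoriality of the $W^*$-reduced free product (see \cite{vdn}) supplies a normal $*$-homomorphism $\iota_{S,T}\colon\CM_S\to\CM_T$ with $\iota_{S,T}\circ\iota^S_i=\rho_i$ for each $i$ (where $\iota^S_i$ is the canonical embedding of $L^{\infty}(\nu(E_i,\cdot))$ into $\CM_S$ from Lemma~\ref{delI}) and $\tau_T\circ\iota_{S,T}=\tau_S$. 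Concretely, one defines $\iota_{S,T}$ on the weakly dense algebraic free product by the universal property of the latter, checks $\tau_T\circ\iota_{S,T}=\tau_S$ on reduced words using freeness of the ranges $W^*(N_i)$, and extends to $\CM_S$ using faithfulness of the traces ($L^2$-isometry).

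Finally I would derive \eqref{EFLB_eq1}. Extending to affiliated operators via Proposition~\ref{ext_to_unbounded_ops} and using that such extensions respect composition and functional calculus, one has $\oli_{S,T}\circ\overline{\iota^S_i}=\overline{\rho_i}$, while $\overline{\rho_i}(\id_{\R})=N_i$ since $\rho_i$ sends bounded truncations of $\id_{\R}$ to the corresponding truncations of $N_i$. Recalling $M_S(E_i)=\overline{\iota^S_i}(\id_{\R})$ from Lemma~\ref{delI}, this gives $\oli_{S,T}(M_S(E_i))=\overline{\rho_i}(\id_{\R})=N_i=M_T(F_{j(i,1)})+\cdots+M_T(F_{j(i,l_i)})$, which is \eqref{EFLB_eq1}. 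The main obstacle is the construction of $\iota_{S,T}$ itself, namely the passage from the algebraic to the reduced free product (trace-preservation on reduced words together with the normal extension), and this is precisely where free independence of the \emph{grouped sums} $N_i$, rather than of the original generators $M_T(F_j)$, is the essential input.
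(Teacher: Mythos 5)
Your proposal is correct and follows essentially the same route as the paper: form the grouped sums $N_i=M_T(F_{j(i,1)})+\cdots+M_T(F_{j(i,l_i)})$, observe via \eqref{key_existence_eq} that $\spL\{N_i\}=\nu(E_i,\cdot)$ and that the $N_i$ generate freely independent subalgebras of $\CM_T$ (by \cite[Proposition~2.5.5]{vdn}), and then pass to the unbounded formula \eqref{EFLB_eq1} by truncating $\id_{\R}$ and using Proposition~\ref{ext_to_unbounded_ops}. The only cosmetic difference is that where you invoke the universal property of the reduced free product (and sketch its proof via reduced words and the $L^2$-extension), the paper packages the same fact as Proposition~\ref{iso_lemma}, deducing $\iota_{S,T}$ from the equality of the joint $*$-distributions of $\{f(M_S(E_i))\}$ and $\{f(N_i)\}$.
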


\begin{proof} We adopt the notation from Lemma~\ref{delI}. 
Given any $i$ in $\{1,2,\ldots,k\}$ we may, since $S\le T$, write $E_i$
(unambiguously) as $F_{j(i,1)}\cup\cdots\cup F_{j(i,l_i)}$ for suitable
  $j(i,1),\ldots,j(i,l_i)$ from $\{1,\ldots,m\}$. Since the operators
  $M_T(F_{j(i,1)}),\ldots,M_T(F_{j(i,l_i)})$ are freely independent, it
  follows then that
\begin{equation*}
\begin{split}
\spL\{M_T(F_{j(i,1)})+\cdots+M_T(F_{j(i,l_i)})\}
&=\nu(F_{j(i,1)},\cdot)\boxplus\cdots\boxplus\nu(F_{j(i,l_i)},\cdot)
\\[.2cm]
&=\nu\big(F_{j(i,1)}\cup\cdots\cup F_{j(i,l_i)},\cdot)
=\nu(E_i,\cdot)=\spL\{M_S(E_i)\}.
\end{split}
\end{equation*}
Note also that since all the operators $M_T(F_1),\ldots,M_T(F_m)$ are
freely independent, the sums
$M_T(F_{j(i,1)})+\cdots+M_T(F_{j(i,l_i)})$, $i=1,\ldots,k$, are also
freely independent. Indeed, for each $i$ the sum
$M_T(F_{j(i,1)})+\cdots+M_T(F_{j(i,l_i)})$ is affiliated with the von
Neumann algebra generated by 
$L^{\infty}(\nu(F_{j(i,1)},\cdot),\ldots,L^{\infty}(\nu(F_{j(i,l_i)},\cdot)$
considered as sub-algebras of $\CM_T$. And by
\cite[Proposition~2.5.5]{vdn} these von Neumann subalgebras
are free in $(\CM_T,\tau_T)$ for varying $i$. 
It follows that the two families of operators:
\begin{equation*}
\medcup_{i=1}^k\big\{f\big(M_T(F_{j(i,1)})+\cdots+M_T(F_{j(i,l_i)})\big)
\bigm| f\in\CBF_b(\R)\big\}
\end{equation*}
and
\begin{equation*}
\medcup_{i=1}^k\big\{f\big(M_S(E_i))\big)
\bigm| f\in\CBF_b(\R)\big\}
\end{equation*}
have the same $*$-distribution, and since $M_S(E_1),\ldots,M_S(E_k)$
generate $\CM_S$ as a von Neumann algebra, it follows thus
from Proposition~\ref{iso_lemma} that there exists a normal, injective
$*$-homomorphism
$\iota_{S,T}\colon\CM_S\to\CM_T$ such that
\begin{equation*}
\oli_{S,T}\big(f(M_S(E_i))\big)
=f\big(M_T(F_{j(i,1)})+\cdots+M_T(F_{j(i,l_i)})\big)
\end{equation*}
for any $i$ in $\{1,\ldots,k\}$ and $f$ in $\CBF_b(\R)$. In addition
$\tau_S=\tau_T\circ\iota_{S,T}$. 

To establish \eqref{EFLB_eq1} we consider for each $n$ in $\N$ the
function $f_n\colon\R\to\R$ defined by:
\begin{equation*}
f_n(t)=t1_{[-n,n]}(t)-n1_{(-\infty,-n)}(t)+n1_{(n,\infty)}(t),
\qquad(t\in\R).
\end{equation*}
Then $f_n(t)\to t$ as $n\to\infty$ for all $t$ in $\R$, and this
implies that $f_n(M_S(E_i))\overset{\rm P}{\to} M_S(E_i)$ and that
$f_n(M_T(F_{j(i,1)})+\cdots+M_T(F_{j(i,i_l)}))\overset{\rm P}{\to}
M_T(F_{j(i,1)})+\cdots+M_T(F_{j(i,i_l)})$ as $n\to\infty$
(cf.\ the calculation
\eqref{VNP_eq5} in the proof of
Proposition~\ref{ext_to_unbounded_ops}). From formula \eqref{VNP_eq4} in that
same proof it follows then further that
\begin{equation}
\begin{split}
\oli_{S,T}(M_S(E_i))
&=\text{P-}\lim_{n\to\infty}\iota_{S,T}\big(f_n(M_S(E_i))\big)
=\text{P-}\lim_{n\to\infty}f_n\big(M_T(F_{j(i,1)})+\cdots+M_T(F_{j(i,i_l)})\big)
\\[.2cm]
&=M_T(F_{j(i,1)})+\cdots+M_T(F_{j(i,i_l)}),
\end{split}
\end{equation}
as desired. This completes the proof.
\end{proof}

\begin{lemma}\label{delIII}
  The family $(\CM_S,\tau_S)_{S\in\II}$ of $W^*$-probability spaces
  equipped with the family $\{\iota_{S,T}\mid S,T\in\II,\ S\le T\}$ of
  $*$-homomorphisms described in Lemma~\ref{delII}
forms a directed system of $W^*$-algebras and
  injective, normal $*$-homomorphisms.
\end{lemma}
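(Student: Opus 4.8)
The plan is to check that the data $(\CM_S,\tau_S)_{S\in\II}$ together with the maps $\iota_{S,T}$ satisfy the axioms of a directed system over the index set $\II$. That $(\II,\le)$ is directed has already been observed, and Lemma~\ref{delII} has already supplied, for each pair $S\le T$, an injective normal $*$-homomorphism $\iota_{S,T}\colon\CM_S\to\CM_T$ with $\tau_S=\tau_T\circ\iota_{S,T}$. So the remaining content is exactly the two coherence conditions: $\iota_{S,S}=\id_{\CM_S}$ for every $S$, and $\iota_{T,U}\circ\iota_{S,T}=\iota_{S,U}$ whenever $S\le T\le U$. (The trace-compatibility of the composite, $\tau_S=\tau_U\circ(\iota_{T,U}\circ\iota_{S,T})$, then follows at once by composing the two identities from Lemma~\ref{delII}.)

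Both coherence conditions I would deduce from a single uniqueness principle. Since $M_S(E_1),\ldots,M_S(E_k)$ generate $\CM_S$ as a von Neumann algebra, any two normal $*$-homomorphisms out of $\CM_S$ agreeing on the generating family $\{f(M_S(E_i))\mid i\le k,\ f\in\CBF_b(\R)\}$ must coincide: they agree on the $*$-algebra generated by this family, hence on its $\sigma$-weak closure by normality. The identity $\iota_{S,S}=\id_{\CM_S}$ is then immediate, because when $T=S$ the decomposition of each $E_i$ in \eqref{EFLB_eq1} is the trivial one $E_i=E_i$, so $\iota_{S,S}$ fixes every generator $f(M_S(E_i))$.

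For transitivity I would fix $S=\{E_1,\ldots,E_k\}\le T=\{F_1,\ldots,F_m\}\le U=\{G_1,\ldots,G_p\}$ and write $E_i=\bigcup_{j\in J_i}F_j$ and $F_j=\bigcup_{l\in L_j}G_l$ for suitable index sets $J_i\subseteq\{1,\ldots,m\}$, $L_j\subseteq\{1,\ldots,p\}$; transitivity of the order yields $E_i=\bigcup_{j\in J_i}\bigcup_{l\in L_j}G_l$. Using the description of $\iota_{S,T}$ on bounded functions from the proof of Lemma~\ref{delII}, the intertwining $\iota_{T,U}(f(a))=f(\oli_{T,U}(a))$ together with the additivity of $\oli_{T,U}$ on strong sums from Proposition~\ref{ext_to_unbounded_ops}, and formula \eqref{EFLB_eq1}, I compute
\[
\iota_{T,U}\bigl(\iota_{S,T}(f(M_S(E_i)))\bigr)
=\iota_{T,U}\Bigl(f\bigl(\tsum_{j\in J_i}M_T(F_j)\bigr)\Bigr)
=f\Bigl(\tsum_{j\in J_i}\tsum_{l\in L_j}M_U(G_l)\Bigr),
\]
while \eqref{EFLB_eq1} applied to $S\le U$ gives $\iota_{S,U}(f(M_S(E_i)))=f\bigl(\tsum_{j\in J_i}\tsum_{l\in L_j}M_U(G_l)\bigr)$ for the same collection of $G_l$'s. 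Thus $\iota_{T,U}\circ\iota_{S,T}$ and $\iota_{S,U}$ agree on every generator, and the uniqueness principle forces them to be equal.

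The only real point needing care is the combinatorial bookkeeping: the disjointness of the $F_j$'s and of the $G_l$'s makes the two decompositions unambiguous and turns $\bigcup_{j\in J_i}\bigcup_{l\in L_j}G_l$ into a genuine disjoint union with no repeated index, which is what legitimizes pushing $\oli_{T,U}$ through the finite sum term-by-term via \eqref{EFLB_eq1}. I expect this, rather than any analytic subtlety, to be the main thing to get right; everything analytic is absorbed into the normality-based passage from generators to all of $\CM_S$ and into the functional-calculus properties already established in Proposition~\ref{ext_to_unbounded_ops}.
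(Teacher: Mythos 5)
Your proposal is correct and follows essentially the same route as the paper: both reduce the cocycle identity to checking it on the generators $f(M_S(E_i))$ via \eqref{EFLB_eq1}, the additivity of the extended maps from Proposition~\ref{ext_to_unbounded_ops}, and the preservation of functional calculus, and then pass to all of $\CM_S$ by normality (the paper makes this last step explicit via Kaplansky's Density Theorem, which is the standard way to justify your ``agree on the generated $*$-algebra, hence on its weak closure'' step). The only cosmetic difference is that the paper performs the key computation at the level of the unbounded sums $\oli_{R,T}(M_R(D_h))$ before applying $f$, whereas you apply $f$ first; both are legitimate given Proposition~\ref{ext_to_unbounded_ops}.
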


\begin{proof} Given $R,S,T$ in $\II$ such that $R\le S\le T$, we must show
that $\iota_{R,T}=\iota_{S,T}\circ\iota_{R,S}$. Writing
$R=\{D_1,\ldots,D_m\}$, $S=\{E_1,\ldots,E_k\}$ and $T=\{F_1,\ldots,F_l\}$
for suitable $D_h,E_i,F_j$ from $\CE\setminus\{\emptyset\}$, we know
that
\begin{equation*}
\begin{split}
D_h&=E_{i(h,1)}\cup\cdots\cup E_{i(h,k_h)}, \qquad(h=1,\ldots,m),
\\[.2cm]
E_i&=F_{j(i,1)}\cup\cdots\cup F_{j(i,l_i)}, \qquad(i=1,\ldots,k),
\end{split}
\end{equation*}
for suitable $i(h,1),\ldots,i(h,k_h)$ in $\{1,\ldots,k\}$ and
$j(i,1),\ldots,j(i,l_i)$ from $\{1,\ldots,l\}$. Then
\begin{equation*}
\begin{split}
\oli_{R,T}(M_R(D_h))&=
\sum_{r=1}^{l_{i(h,1)}}M_T(F_{j(i(h,1),r)})
+\cdots+
\sum_{r=1}^{l_{i(h,k_h)}}M_T(F_{j(i(h,k_h),r)})
\\[.2cm]
&=\oli_{S,T}(M_S(E_{i(h,1)}))+\cdots+
\oli_{S,T}(M_S(E_{i(h,k_h)}))
\\[.2cm]
&=\oli_{S,T}\big((M_S(E_{i(h,1)})+\cdots+M_S(E_{i(h,k_h)})\big)
\\[.2cm]
&=\oli_{S,T}\big(\oli_{R,S}(D_h)\big).
\end{split}
\end{equation*}
For any $f$ in $\CBF_b(\R)$ it follows then (cf.\
Proposition~\ref{ext_to_unbounded_ops}) that
\begin{equation*}
\begin{split}
\iota_{R,T}\big(f(M_R(D_h))\big)&=f\big(\oli_{R,T}(M_R(D_h))\big)
=f\big(\oli_{S,T}\circ\oli_{R,S}(M_R(D_h))\big)
\\[.2cm]
&=\iota_{S,T}\big(f(\oli_{R,S}(M_R(D_h)))\big)
=\iota_{S,T}\circ\iota_{R,S}\big(f(M_R(D_h))\big).
\end{split}
\end{equation*}
Since $\CM_R$ is generated as a von Neumann algebra by the family
\begin{equation*}
\medcup_{h=1}^m\{f(M_R(D_h))\mid f\in\CBF_b(\R)\},
\end{equation*}
and since $\iota_{R,T}$ and $\iota_{S,T}\circ\iota_{R,S}$ are both
normal, it follows by an application of Kaplansky's Density Theorem that
$\iota_{R,T}=\iota_{R,S}\circ\iota_{S,T}$, as desired.
\end{proof}

\begin{proofof}[Proof of Theorem~\ref{existence_FLB}.]

We note first that assertions \ref{existence_FLB(b)} and
\ref{existence_FLB(c)} are direct consequences of
\ref{existence_FLB(a)}. To prove \ref{existence_FLB(a)},
we consider the directed system (cf.\ Lemma~\ref{delIII})
\begin{equation*}
(\CM_S,\tau_S)_{S\in\II}, \quad \{\iota_{S,T}\mid S,T\in\II, \ S\le
T\}
\end{equation*}
of $W^*$-probability spaces and trace preserving
$*$-homomorphisms. Using Proposition~\ref{W*_inductive_limit}, there
exists a $W^*$-probability space $(\CM,\tau)$ and injective, normal
$*$-homomorphisms $\iota_S\colon\CM_S\to\CM$ ($S\in\II$), satisfying
that $\tau_S=\tau\circ\iota_S$ for all $S$ in $\II$, and that
$\iota_S=\iota_T\circ\iota_{S,T}$ for any $S,T$ in $\II$ such that
$S\le T$. 
We now define
\begin{equation}
M(\emptyset)=0, \qquad\text{and}\qquad
M(E)=\oli_{\{E\}}(M_{\{E\}}(E)) \quad\text{for $E$ in $\CE\setminus\{\emptyset\}$},
\label{EFLB_eq2}
\end{equation}
where $M_{\{E\}}(E)$ denotes the identity function $\id_{\R}$ on $\R$ considered as
an element of $\overline{L^{\infty}(\nu(E,\cdot))}=\olCM_{\{E\}}$. We will
show that the family $\{M(E)\mid E\in\CE\}$
satisfies the conditions \ref{def_FLB(b)}-\ref{def_FLB(c)} in
Definition~\ref{def_FLB} and that $\spL\{M(E)\}=\nu(E,\cdot)$ for all
$E$ in $\CE$.

\begin{lenumerate}[a]

\item
Assume that $E_1,\ldots,E_r$ are disjoint sets from
$\CE\setminus\{\emptyset\}$, and put $S=\{E_1,\ldots,E_r\}\in\II$.
Consider further arbitrary functions $f_1,\ldots,f_r$ from
$\CBF_b(\R)$. We must show that the bounded operators
$f_1(M(E_1)),\ldots,f_r(M(E_r))$ are freely independent with respect
to $\tau$. For any polynomial $p$ in $r$ non-commuting variables we
note (cf.\ Proposition~\ref{ext_to_unbounded_ops}) that
\begin{equation}
\begin{split}
\tau\big[p\big(f_1(M(E_1)),&\ldots,f_r(M(E_r))\big)\big]
\\[.2cm]
&=\tau\big[p\big(f_1(\oli_{\{E_1\}}(M_{\{E_1\}}(E_1))),\ldots,
f_r(\oli_{\{E_r\}}(M_{\{E_r\}}(E_r)))\big)\big]
\\[.2cm]
&=\tau\big[p\big(\iota_{\{E_1\}}(f_1(M_{\{E_1\}}(E_1))),\ldots,
\iota_{\{E_r\}}(f_r(M_{\{E_r\}}(E_r)))\big)\big]
\\[.2cm]
&=\tau\big[p\big(\iota_S\circ\iota_{\{E_1\},S}(f_1),\ldots,
\iota_S\circ\iota_{\{E_r\},S}(f_r)\big)\big]
\\[.2cm]
&=\tau\big[\iota_S\big(p\big(\iota_{\{E_1\},S}(f_1),\ldots,
\iota_{\{E_r\},S}(f_r)\big)\big)\big]
\\[.2cm]
&=\tau_S\big[p\big(\iota_{\{E_1\},S}(f_1),\ldots,
\iota_{\{E_r\},S}(f_r)\big)\big].
\end{split}
\label{EFLB_eq3}
\end{equation}
For each $j$ in $\{1,\ldots,r\}$ recall that $\iota_{\{E_j\},S}$ is
the canonical embedding of $\CM_{\{E_j\}}=L^{\infty}(\nu(E_j,\cdot))$
into the reduced free product $\CM_S=L^{\infty}(\nu(E_1,\cdot))*\cdots*
L^{\infty}(\nu(E_r,\cdot))$. Hence the ranges of
$\iota_{\{E_1\},S},\cdots,\iota_{\{E_r\},S}$ are free in
$(\CM_S,\tau_S)$, and in particular
$\iota_{\{E_1\},S}(f_1),\ldots,\iota_{\{E_r\},S}(f_r)$ are freely
independent with respect to $\tau_S$. Since \eqref{EFLB_eq3} holds for
any polynomial $p$ in $r$ non-commuting variables, it follows then that
$f_1(M(E_1)),\ldots,f_r(M(E_r))$ are freely independent with respect
to $\tau$.

\item
Let $E_1,\ldots,E_r$ be disjoint sets from
$\CE\setminus\{\emptyset\}$, and put $E=\medcup_{j=1}^rE_j$ and
$S=\{E_1,\ldots,E_r\}\in\II$. We must show that
$M(E)=M(E_1)+\cdots+M(E_r)$. Using Corollary~\ref{composition_lemma} we
find that
\begin{equation*}
\begin{split}
M(E_1)+&\cdots+M(E_r)
\\[.2cm]
&=\oli_{\{E_1\}}(\id_{\R})+\cdots+\oli_{\{E_r\}}(\id_{\R})
=\overline{\iota_S\circ\iota_{\{E_1\},S}}(\id_{\R})+\cdots+
\overline{\iota_S\circ\iota_{\{E_r\},S}}(\id_{\R})
\\[.2cm]
&=\oli_S\circ\oli_{\{E_1\},S}(\id_{\R})+\cdots+
\oli_S\circ\oli_{\{E_r\},S}(\id_{\R})
=\oli_S\big(M_S(E_1)+\cdots+M_S(E_r)\big)
\\[.2cm]
&=\oli_S\big(\oli_{\{E\},S}(M_{\{E\}}(E))\big)
=\overline{\iota_S\circ\iota_{\{E\},S}}(\id_{\R})
\\[.2cm]
&=\oli_{\{E\}}(\id_{\R})
=M(E),
\end{split}
\end{equation*}
where in the first, fourth, fifth and last equality we applied 
\eqref{EFLB_eq2}, \eqref{EFLB_eq1}, \eqref{EFLB_eq1} and
\eqref{EFLB_eq2}, respectively. 
\end{lenumerate}
To show finally that $\spL\{M(E)\}=\nu(E,\cdot)$ for all
$E$ in $\CE$, we assume without loss of generality that
$E\ne\emptyset$. Then since
$\tau_{\{E\}}=\tau\circ\iota_{\{E\}}$ we find for any function $f$ in
$\CBF_b(\R)$ that
\begin{equation*}
 \tau\big[f(M(E))\big]
=\tau\big[f\big(\oli_{\{E\}}(M_{\{E\}}(E))\big)\big]
=\tau\big[\iota_{\{E\}}\big(f(M_{\{E\}}(E))\big)\big]
=\tau_{\{E\}}(f)=\int_{\R}f\6\nu(E,\cdot),
\end{equation*}
which proves the desired identity.
This completes the proof.
\end{proofof}

\appendix

\section{Von Neumann algebra preliminaries}

To accommodate potential readers with limited background in the theory
of operator algebras, we start by recalling briefly various basic
concepts from that theory. For a thorough introduction to operator
algebras we refer to the classical text \cite{kr}. First
of all an algebra (over $\C$) is a vector space
$\CA$ over $\C$, which is also furnished with an associative
multiplication satisfying
the usual distributive laws in relation to the linear operations. One
may think of the matrix algebra $M_n(\C)$ as a concrete example. As in
this particular case the multiplication is generally not assumed to be
commutative. We say that $\CA$ is a $*$-algebra, if it is additionally
equipped with an involution (or $*$-operation) $a\mapsto
a^*\colon\CA\to\CA$, satisfying that $(a+b)^*=a^*+b^*$,
$(za)^*=\overline{z}a^*$, $(ab)^*=b^*a^*$ and $(a^*)^*=a$ for all
$a,b$ in $\CA$ and $z$ in $\C$. 

A $C^*$-algebra is a $*$-algebra $\CA$,
which is also a Banach space with respect to a norm $\|\cdot\|$,
satisfying additionally that $\|ab\|\le\|a\|\|b\|$ and $\|a^*a\|=\|a\|^2$ for all
$a,b$ in $\CA$. Again,
$M_n(\C)$ provides a (finite dimensional) example of a $C^*$-algebra,
and more generally the space $\CB(\CH)$ of continuous linear mappings
$T\colon\CH\to\CH$ on a Hilbert space $\CH$ is a canonical example of
a $C^*$-algebra. In fact any $C^*$-algebra may be identified
  with a norm closed, $*$-invariant subalgebra of $\CB(\CH)$. 
As in \cite{kr} we shall generally assume that a $C^*$-algebra $\CA$ comes equipped with a 
multiplicative neutral element $\unit_{\CA}$. If $\CA$ and $\CB$ are 
two $C^*$-algebras, a linear mapping $\phi\colon\CA\to\CB$ is called a
$*$-homomorphism, if $\phi(\unit_{\CA})=\unit_{\CB}$,
$\phi(ab)=\phi(a)\phi(b)$, and $\phi(a^*)=\phi(a)^*$ for all $a,b$ in $\CA$.

 A von Neumann algebra acting on a Hilbert space $\CH$ is a
 $*$-invariant subalgebra $\CM$ of $\CB(\CH)$, which is closed in the
weak operator topology, i.e.\ the weak
topology on $\CB(\CH)$
induced by the family $\{\omega_{\xi,\eta}\mid
 \xi,\eta\in\CH\}$ of linear functionals given by
\[
\omega_{\xi,\eta}(a)=\<a\xi,\eta\>, \qquad(a\in\CB(\CH)).
\]
As this topology is weaker than that induced by the $C^*$-norm
on $\CB(\CH)$, a von Neumann algebra is automatically a
 $C^*$-algebra. If $\CM$ and $\CN$ are two von Neumann-algebras 
(possibly acting on different Hilbert spaces) and
 $\phi\colon\CM\to\CN$ is a $*$-homomorphism, then $\phi$ is said to be
 normal if its restriction to the unit ball of $\CM$ is continuous
 with respect to the weak operator topologies on $\CM$ and $\CN$.
 
With the above basic concepts in place, we recall next that a
$W^*$-probability space is a pair $(\CM,\tau)$, where
$\CM$ is a von Neumann algebra (acting on some Hilbert space), and
$\tau$ is a faithful, normal, tracial state on $\CM$. Specifically
$\tau$ is a linear mapping from $\CM$ into $\C$, which is continuous
on the unit ball of $\CM$ with respect to the weak operator topology 
and satisfies the following conditions: $\tau(a^*a)>0$ for all $a$ in
$\CM\setminus\{0\}$, $\tau(ab)=\tau(ba)$ for all $a,b$ in $\CM$, and 
$\tau(\unit_{\CM})=1$.

If $I$ is an arbitrary non-empty index set, and $(x_i)_{i\in I}$ is a
corresponding family of operators in a $W^*$-probability space
$(\CM,\tau)$, then the $*$-distribution of $(x_i)_{i\in I}$ is the
collection of all complex numbers in the form
\begin{equation*}
\tau\big(z_1^{p_1}z_2^{p_2}\cdots z_n^{p_n}\big),
\end{equation*}
where $n\in\N$, $p_1,\ldots,p_n\in\N$ and 
$z_1,\ldots,z_n\in\{x_i\mid i\in I\}\cup\{x^*_i\mid i\in I\}$. 

\begin{proposition}\label{iso_lemma}
Let $(\CM,\tau)$ and $(\CN,\psi)$ be $W^*$-probability spaces, let $I$
be a non-empty index set, and
assume that $(x_i)_{i\in I}$ and $(y_i)_{i\in I}$ are families of
operators from $\CM$ and $\CN$, respectively. Let $\CM_0$ denote the
von Neumann subalgebra of $\CM$ generated by $(x_i)_{i\in I}$, and let
$\CN_0$ denote the von Neumann subalgebra of $\CN$ generated by
$(y_i)_{i\in I}$.

If the $*$-distribution of $(x_i)_{i\in I}$ (with respect to $\tau$) equals
that of $(y_i)_{i\in I}$ (with respect to $\psi$), then there exists a
normal $*$-isomorphism $\Phi$ of $\CM_0$ onto $\CN_0$, such that 
$\tau=\psi\circ\Phi$ on $\CM_0$, and such that $\Phi(x_i)=y_i$ for all
$i$ in $I$.
\end{proposition}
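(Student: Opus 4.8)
The plan is to first construct the isomorphism at the purely algebraic level on the $*$-algebras generated by the two families, and only afterwards extend it to the weak closures by implementing it spatially via the GNS representations associated with $\tau$ and $\psi$. Let $\CA_0$ denote the unital $*$-subalgebra of $\CM$ algebraically generated by $\{x_i,x_i^*\mid i\in I\}$, and let $\CB_0$ denote the corresponding $*$-subalgebra of $\CN$ generated by $\{y_i,y_i^*\mid i\in I\}$. Every element of $\CA_0$ is a non-commutative $*$-polynomial in the $x_i$, so there is an obvious candidate map $\Phi_0\colon\CA_0\to\CB_0$ sending $p(x_i,x_i^*)$ to $p(y_i,y_i^*)$. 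The crucial point is that $\Phi_0$ is well defined and injective: if $P$ is a $*$-polynomial with $P(x_i,x_i^*)=0$, then by the equality of the two $*$-distributions we have $\psi\big(P(y_i,y_i^*)^*P(y_i,y_i^*)\big)=\tau\big(P(x_i,x_i^*)^*P(x_i,x_i^*)\big)=0$, whence $P(y_i,y_i^*)=0$ by faithfulness of $\psi$; the reverse implication follows symmetrically from faithfulness of $\tau$. Thus $\Phi_0$ is a well-defined $*$-isomorphism of $\CA_0$ onto $\CB_0$, and the hypothesis gives directly $\psi\circ\Phi_0=\tau$ on $\CA_0$ together with $\Phi_0(x_i)=y_i$.

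Next I would pass to the GNS Hilbert spaces. Let $H_\tau=L^2(\CM_0,\tau)$ with cyclic and separating trace vector $\Omega_\tau$, and similarly let $H_\psi=L^2(\CN_0,\psi)$ with trace vector $\Omega_\psi$; here $\CM_0$ and $\CN_0$ act by left multiplication, yielding faithful normal representations. Using that $\Phi_0$ preserves the trace, the computation $\langle a\Omega_\tau,b\Omega_\tau\rangle=\tau(b^*a)=\psi(\Phi_0(b)^*\Phi_0(a))=\langle\Phi_0(a)\Omega_\psi,\Phi_0(b)\Omega_\psi\rangle$ shows that $a\Omega_\tau\mapsto\Phi_0(a)\Omega_\psi$ is a well-defined isometry on the dense subspace $\CA_0\Omega_\tau$, and therefore extends to a unitary $U\colon H_\tau\to H_\psi$. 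Density of $\CA_0\Omega_\tau$ in $H_\tau$ (and of $\CB_0\Omega_\psi$ in $H_\psi$) follows from Kaplansky's density theorem, since $\CA_0$ is strongly dense in $\CM_0=\CA_0''$ and $\Omega_\tau$ is cyclic for $\CM_0$. A direct computation on the dense subspace $\CA_0\Omega_\tau$ then gives $Ua=\Phi_0(a)U$ for each $a\in\CA_0$, that is, $UaU^*=\Phi_0(a)$.

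Finally I would set $\Phi(m)=UmU^*$ for $m\in\CM_0$. Conjugation by a unitary is automatically a normal $*$-isomorphism and commutes with forming commutants and bicommutants, so $\Phi(\CM_0)=U\CA_0''U^*=(U\CA_0U^*)''=\CB_0''=\CN_0$, where I use the double commutant theorem ($\CA_0''=\CM_0$ and $\CB_0''=\CN_0$) together with $U\CA_0U^*=\Phi_0(\CA_0)=\CB_0$. Hence $\Phi$ is a normal $*$-isomorphism of $\CM_0$ onto $\CN_0$ extending $\Phi_0$, so in particular $\Phi(x_i)=y_i$; and $\psi\circ\Phi=\tau$ on $\CM_0$ because this identity already holds on the weakly dense subalgebra $\CA_0$ while both $\tau$ and $\psi\circ\Phi$ are normal. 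The main obstacle is precisely this last extension step: an algebraic (indeed even $C^*$-) $*$-isomorphism of the generating algebras need not be normal, and it is the trace-preserving property together with the spatial GNS implementation that forces normality and correctly identifies the images of the two weak closures.
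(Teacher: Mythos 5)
Your argument is correct: the well-definedness of $\Phi_0$ via faithfulness, the unitary $U$ implementing the trace-preserving isomorphism on the GNS spaces, and the passage to the bicommutants by conjugation is exactly the standard proof of this statement. The paper itself does not prove the proposition but refers to \cite[Theorem~2 in Section 6.5]{MiSp} and \cite[Remark~1.8]{vo2}, where essentially this same GNS/unitary-implementation argument is carried out, so your proposal agrees with the paper's intended route.
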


For the proof of Proposition~\ref{iso_lemma} we refer to
\cite[Theorem~2 in Section 6.5]{MiSp} or
\cite[Remark~1.8]{vo2}.

\begin{corollary}\label{automatic_normality}
Let $(\CM,\tau)$ and $(\CN,\psi)$ be $W^*$-probability spaces, and let
$\Phi\colon\CM\to\CN$ be a $*$-homomorphism such that
$\tau=\psi\circ\Phi$. Then $\Phi$ is automatically normal and
injective, and $\Phi(\CM)$ is a von Neumann subalgebra of $\CN$.
\end{corollary}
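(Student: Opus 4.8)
The plan is to obtain everything by reducing to Proposition~\ref{iso_lemma}, after disposing of injectivity directly. For injectivity, suppose $\Phi(a)=0$ for some $a$ in $\CM$. Since $\Phi$ is a $*$-homomorphism and $\tau=\psi\circ\Phi$, we get $\tau(a^*a)=\psi(\Phi(a)^*\Phi(a))=\psi(0)=0$, and faithfulness of $\tau$ forces $a=0$. This step is routine; the real content of the corollary is the normality of $\Phi$, together with the resulting description of its range.

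The key observation is that the hypotheses encode an equality of $*$-distributions in exactly the form required by Proposition~\ref{iso_lemma}. Take the index set $I=\CM$ and consider the families $(x_a)_{a\in\CM}=(a)_{a\in\CM}$ in $(\CM,\tau)$ and $(y_a)_{a\in\CM}=(\Phi(a))_{a\in\CM}$ in $(\CN,\psi)$. Because $\Phi$ is multiplicative and satisfies $\Phi(a)^*=\Phi(a^*)$, any word in the $y_a$ and $y_a^*$ is the image under $\Phi$ of the corresponding word in the $x_a$ and $x_a^*$; applying $\psi$ and using $\tau=\psi\circ\Phi$ then shows that the two families have the same $*$-distribution. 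The von Neumann subalgebra generated by $(x_a)_{a\in\CM}$ is all of $\CM$, while that generated by $(y_a)_{a\in\CM}$ is the subalgebra $\CN_0$ of $\CN$ generated by $\Phi(\CM)$. Proposition~\ref{iso_lemma} thus yields a normal $*$-isomorphism $\Phi_0\colon\CM\to\CN_0$ with $\Phi_0(a)=\Phi(a)$ for every $a$ in $\CM$.

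Since $I=\CM$, the identity $\Phi_0(a)=\Phi(a)$ holds for all $a$, so $\Phi_0=\Phi$ as maps on $\CM$. Hence $\Phi$ is itself this normal $*$-isomorphism: it is normal, and its range equals $\Phi_0(\CM)=\CN_0$, a von Neumann subalgebra of $\CN$, as claimed. I expect no genuine obstacle along this route, since Proposition~\ref{iso_lemma} already carries the hard analytic content. Should one instead prefer a self-contained argument, the one delicate point would be normality, which can be established directly via the supremum-preservation criterion: for a bounded increasing net of positive elements $a_i\uparrow a$ in $\CM$, the images $\Phi(a_i)$ increase to some $b\le\Phi(a)$, and normality of both states together with $\psi(b)=\lim\psi(\Phi(a_i))=\lim\tau(a_i)=\tau(a)=\psi(\Phi(a))$ forces $b=\Phi(a)$ by faithfulness of $\psi$; the range is then a von Neumann subalgebra because a normal injective $*$-homomorphism has $\sigma$-weakly closed image.
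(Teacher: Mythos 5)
Your argument is correct and follows essentially the same route as the paper: both proofs apply Proposition~\ref{iso_lemma} to the families $(a)_{a\in\CM}$ and $(\Phi(a))_{a\in\CM}$, observe that $\tau=\psi\circ\Phi$ gives equality of $*$-distributions, and identify the resulting normal $*$-isomorphism with $\Phi$ itself, from which normality, injectivity and the fact that $\Phi(\CM)$ is a von Neumann subalgebra all follow. Your separate direct verification of injectivity via faithfulness of $\tau$ is harmless but redundant, since it already comes out of the isomorphism.
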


\begin{proof} Since $\tau=\psi\circ\Phi$, the two families of operators $\CM$
and $\Phi(\CM)$ (indexed by $\CM$) have the same $*$-distribution.
By application of Proposition~\ref{iso_lemma}, we obtain thus a normal
$*$-isomorphism $\tilde{\Phi}$ of the von Neumann subalgebra generated by
$\CM$ (i.e.\ $\CM$ itself) onto the von Neumann subalgebra of $\CN$ generated by
$\Phi(\CM)$, such that $\tilde{\Phi}(a)=\Phi(a)$ for all $a$ in
$\CM$. Obviously then $\Phi=\tilde{\Phi}$, so $\Phi$ is normal and
injective. In addition $\Phi(\CM)=\tilde{\Phi}(\CM)$, which is a von
Neumann algebra.
\end{proof} 

Recall that a partial order ``$\le$'' on a set $\CS$ is called \emph{upward
filtering}, if, for any elements $S,T$ in $\CS$, there exists an
element $U$ in $\CS$ such that $S\le U$ and $T\le U$.

The following result essentially amounts to the existence of inductive
limits in the category of $W^*$-probability spaces.

\begin{proposition}\label{W*_inductive_limit}
Consider a set $\CS$ equipped with an upward filtering partial order
``$\le$''. Consider additionally a corresponding family
$(\CM_S,\tau_S)_{S\in\CS}$ of $W^*$-probability spaces, and assume
that whenever $S,T\in\CS$, such that $S\le T$, there is a
$*$-homomorphism $\Phi_{S,T}\colon\CM_S\to\CM_T$ such that
$\tau_S=\tau_T\circ\Phi_{S,T}$.

Then there exists a $W^*$-probability space $(\CM,\tau)$ and injective
normal $*$-homomorphisms $\Phi_S\colon\CM_S\to\CM$ ($S\in\CS$), such
that
\begin{equation*}
\tau\circ\Phi_S=\tau_S \quad\text{for all $S$ in $\CS$,}
\end{equation*}
and
\begin{equation*}
\Phi_T\circ\Phi_{S,T}=\Phi_S \quad\text{for all $S,T$ in $\CS$, such
  that $S\le T$.}
\end{equation*}
In addition $\CM$ is generated as a von Neumann algebra by the
$*$-subalgebra $\medcup_{S\in\CS}\Phi_S(\CM_S)$.

The properties listed above characterize $(\CM,\tau)$ up to trace
preserving $*$-isomorphisms.
\end{proposition}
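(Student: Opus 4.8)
The plan is to realize $(\CM,\tau)$ as the inductive limit of the GNS representations of the spaces $(\CM_S,\tau_S)$, mimicking the construction of inductive limits of Hilbert spaces. Throughout I treat the data as a genuine directed system, i.e. I use the coherence relations $\Phi_{S,S}=\id_{\CM_S}$ and $\Phi_{T,U}\circ\Phi_{S,T}=\Phi_{S,U}$ for $S\le T\le U$; these hold in the situation of Lemma~\ref{delIII}, and are in fact forced by the injectivity of the $\Phi_S$ demanded in the conclusion. First I would carry out the GNS construction for each $S$: let $\CH_S=L^2(\CM_S,\tau_S)$ with cyclic vector $\xi_S$ (the class of $\unit_{\CM_S}$) and faithful normal representation $\pi_S\colon\CM_S\to\CB(\CH_S)$, identifying $\CM_S$ with $\pi_S(\CM_S)$; since $\tau_S$ is a trace, $\xi_S$ is cyclic and separating. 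The identity $\tau_S=\tau_T\circ\Phi_{S,T}$ produces, for $S\le T$, a well-defined isometry $V_{S,T}\colon\CH_S\to\CH_T$ with $V_{S,T}(a\xi_S)=\Phi_{S,T}(a)\xi_T$, because $\|\Phi_{S,T}(a)\xi_T\|^2=\tau_T(\Phi_{S,T}(a^*a))=\tau_S(a^*a)=\|a\xi_S\|^2$. Coherence gives $V_{T,U}V_{S,T}=V_{S,U}$, and a direct computation gives the intertwining relation $V_{S,T}\pi_S(a)=\pi_T(\Phi_{S,T}(a))V_{S,T}$.

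Next I would form the Hilbert-space inductive limit $\CH$ of the directed system $(\CH_S,V_{S,T})$, obtaining isometries $W_S\colon\CH_S\to\CH$ with $W_TV_{S,T}=W_S$ and $\CH=\overline{\medcup_S W_S(\CH_S)}$. As each $\Phi_{S,T}$ is unital, $V_{S,T}\xi_S=\xi_T$, so all the vectors $W_S\xi_S$ agree; denote this common unit vector by $\Omega$. On the dense subspace $\medcup_T W_T(\CH_T)$ I define, for $a\in\CM_S$,
\[
\Phi_S(a)\,W_T(y)=W_T\bigl(\pi_T(\Phi_{S,T}(a))\,y\bigr),\qquad T\ge S,\ y\in\CH_T.
\]
Writing $W_T=W_UV_{T,U}$ and invoking the intertwining relation together with coherence shows this is independent of the representative $T$, while $\|\Phi_S(a)W_T(y)\|\le\|a\|\,\|W_T(y)\|$ (as $*$-homomorphisms are contractive) shows it extends to a bounded operator; thus $\Phi_S\colon\CM_S\to\CB(\CH)$ is a unital $*$-homomorphism, and the same bookkeeping yields $\Phi_T\circ\Phi_{S,T}=\Phi_S$. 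I then set $\CM$ to be the von Neumann algebra generated by $\medcup_S\Phi_S(\CM_S)$ and let $\tau$ be the vector state $x\mapsto\<x\Omega,\Omega\>$ on $\CM$.

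That $\tau\circ\Phi_S=\tau_S$ is immediate, since $\Phi_S(a)\Omega=W_S(a\xi_S)$ and $\<a\xi_S,\xi_S\>=\tau_S(a)$. The heart of the argument is to verify that $\tau$ is a faithful, normal, tracial state; granting this, Corollary~\ref{automatic_normality} applied to $\Phi_S\colon(\CM_S,\tau_S)\to(\CM,\tau)$ at once gives that each $\Phi_S$ is normal and injective with von Neumann image. Normality of $\tau$ is free, being a vector state. For traciality I would first note that any finite product of elements of the $*$-subalgebra $\medcup_S\Phi_S(\CM_S)$ lands, by upward filtering, inside a single $\Phi_T(\CM_T)$, where $\tau$ restricts to $\tau_T$ and hence is tracial; one then passes to all of $\CM$ by fixing one argument in this dense subalgebra and using that the normal functional $x\mapsto\tau(xy-yx)$ is weak-operator continuous on the unit ball and vanishes on a Kaplansky-dense set, repeating the argument in the second variable. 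Faithfulness is where I expect the genuine work, since normal traces need not be faithful: first $\overline{\Phi_S(\CM_S)\Omega}\supseteq W_S(\CH_S)$ (cyclicity of $\xi_S$) shows $\Omega$ is cyclic for $\CM$; then if $\tau(x^*x)=\|x\Omega\|^2=0$, the trace inequality $\|xy_n\Omega\|^2=\tau(y_n^*x^*xy_n)=\tau(x^*x\,y_ny_n^*)\le\|y_ny_n^*\|\,\tau(x^*x)=0$ forces $xy_n\Omega=0$ for every $y_n\in\CM$, so $x$ annihilates the cyclic orbit of $\Omega$ and hence $x=0$.

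Finally, for uniqueness I would appeal to Proposition~\ref{iso_lemma}. Given another $(\CN,\psi)$ with maps $\Psi_S$ satisfying the same properties, the families $\medcup_S\Phi_S(\CM_S)\subseteq\CM$ and $\medcup_S\Psi_S(\CM_S)\subseteq\CN$ have identical $*$-distributions: any $*$-moment involves finitely many operators, which by upward filtering can be pushed into a common $\Phi_T(\CM_T)$ (resp.\ $\Psi_T(\CM_T)$), where both moments coincide with the corresponding moment of $\tau_T$. Since $\CM$ and $\CN$ are generated by these families, Proposition~\ref{iso_lemma} furnishes a trace-preserving normal $*$-isomorphism $\CM\to\CN$ carrying each $\Phi_S$ to $\Psi_S$. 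Everything outside the verification that $\tau$ is a faithful trace (and the extension of traciality past the weak-operator dense generating subalgebra) is routine inductive-limit bookkeeping, so those are the two steps I regard as the main obstacles.
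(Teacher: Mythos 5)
Your proof is correct, and it reaches the same destination as the paper by a genuinely different (though closely parallel) route. The paper first forms the abstract $C^*$-algebra inductive limit $\CM^0$ of the system $(\CM_S,\Phi_{S,T})$ (citing the standard existence theorem in Kadison--Ringrose), patches the traces together into a state $\tau^0$ on the norm-dense union, and only then performs a single GNS construction for $\tau^0$, taking $\CM$ to be the strong closure of $\pi_{\tau^0}(\CM^0)$. You reverse the order: GNS each $(\CM_S,\tau_S)$ separately, splice the resulting Hilbert spaces along the isometries $V_{S,T}$ induced by $\tau_S=\tau_T\circ\Phi_{S,T}$, and define the representations $\Phi_S$ directly on the limit Hilbert space. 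The two constructions are canonically the same object, but yours is more self-contained (no appeal to the $C^*$-inductive-limit machinery), at the cost of having to verify the representative-independence and intertwining bookkeeping by hand. From the point where $(\CM,\tau,\Omega)$ exists onward, the two arguments essentially coincide: traciality on the filtered union of the $\Phi_S(\CM_S)$ followed by a density bootstrap, faithfulness from the fact that a cyclic trace vector is separating (the paper cites this from Kadison--Ringrose, you reprove it via the inequality $\tau(x^*x\,yy^*)\le\|yy^*\|\,\tau(x^*x)$ --- both are fine), normality and injectivity of the $\Phi_S$ from Corollary~\ref{automatic_normality}, and uniqueness from Proposition~\ref{iso_lemma} by matching $*$-distributions inside a common $\Phi_T(\CM_T)$. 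Your explicit observation that the coherence relations $\Phi_{T,U}\circ\Phi_{S,T}=\Phi_{S,U}$ must be assumed (the paper uses them tacitly by speaking of a ``directed system'', and they are indeed forced by the injectivity demanded in the conclusion) is a point in your write-up's favour.
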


\begin{proof} From Corollary~\ref{automatic_normality} we know that $\Phi_{S,T}$
is normal and injective for any $S,T$ in $\CS$ such that $S\le T$, and
moreover $\Phi_{S,T}(\CM_S)$ is a von Neumann subalgebra of $\CM_T$.
Now let $\CM^0$ be the $C^*$-algebra inductive limit of the directed
system
\begin{equation*}
\{\CM_S\mid S\in\CS\}, \quad \{\Phi_{S,T}\mid S,T\in\CS, \ S\le T\}
\end{equation*}
(see \cite[Proposition~11.4.1]{kr} for details). Then for any $S$ in $\CS$ there is a
$*$-monomorphism $\Phi_S^0\colon\CM_S\to\CM^0$, such that
$\Phi_{T}^0\circ\Phi_{S,T}=\Phi_S^0$, whenever $S,T\in\CS$ such that
$S\le T$. Putting $\CM^{00}=\medcup_{S\in\CS}\Phi_S^0(\CM_S)$, we may
then define a linear functional $\tau^{00}\colon\CM^{00}\to\C$ such
that
\begin{equation}
\tau_S=\tau^{00}\circ\Phi_S^0 \quad\text{for all $S$ in $\CS$.}
\label{VNP_eq2}
\end{equation}
Indeed, if $a\in\Phi_S^0(\CM_S)\cap\Phi_T^0(\CM_T)$ for $S,T$ in $\CS$,
we have that $a=\Phi_S^0(a')=\Phi_T^0(a'')$ for
suitable $a'$ in $S$ and $a''$ in $T$, and we must show that
$\tau_S(a')=\tau_T(a'')$. Since ``$\le$'' is upward filtering, we may
choose an element $U$ of $\CS$, such that $S,T\le U$. 
Now 
\begin{equation*}
\Phi_U^0\circ\Phi_{S,U}(a')=\Phi_{S}^0(a')=\Phi_T^0(a'')
=\Phi_{U}^0\circ\Phi_{T,U}(a''),
\end{equation*}
so the injectivity of $\Phi_U^0$ implies that
$\Phi_{S,U}(a')=\Phi_{T,U}(a'')$, and therefore  
\begin{equation*}
\tau_S(a')=\tau_U\circ\Phi_{S,U}(a')=\tau_U\circ\Phi_{T,U}(a'')=
\tau_T(a''),
\end{equation*}
as desired. Thus \eqref{VNP_eq2} gives rise to a well-defined mapping
$\tau^{00}\colon\CM^{00}\to\C$, and by similar reasoning it follows
that $\tau^{00}$ is a linear, positive, tracial and norm-decreasing
functional on $\CM^{00}$. Since $\CM^{00}$ is dense in $\CM^0$ with
respect to the operator norm (cf.\ \cite[Proposition~11.4.1]{kr}), $\tau^{00}$
thus extends to a linear, tracial, norm-decreasing functional
$\tau^0\colon\CM^0\to\C$, and since
$\tau^0(\unit_{\CM^0})=1=\|\tau^0\|$, $\tau^0$ is a state on $\CM^0$. 

Consider next the GNS-representation
$\pi_{\tau^0}\colon\CM^0\to\CB(\CH_{\tau^0})$ of $\CM^0$ associated
with $\tau^0$ (see \cite[Theorem~4.5.2]{kr}),
and let $\xi_0$ denote the unit $\unit_{\CM^0}$ of
$\CM^0$ considered as an element of $\CH_{\tau_0}$.
Let $\CM$ denote
the closure of $\pi_{\tau^0}(\CM^0)$ in the strong operator topology,
and define $\tau\colon\CM\to\C$ by
\begin{equation*}
\tau(a)=\<a\xi_0,\xi_0\>, \qquad(a\in\CM).
\end{equation*}
Then $\tau^0=\tau\circ\pi_{\tau^0}$, so $\tau$ is tracial on
$\pi_{\tau^0}(\CM^0)$. Since multiplication is separately continuous
in each variable in the strong operator topology, it follows by a
``bootstrap'' argument that $\tau$ is tracial on all of $\CM$. Hence
$\xi_0$ is a generating trace vector for $\CM$ and hence also for the
commutant $\CM'$ (see \cite[Lemma~7.2.14]{kr}). This implies that $\xi_0$ is
separating for $\CM$ (see \cite[Corollary~5.5.12]{kr}), and hence $\tau$ is
faithful, so that $(\CM,\tau)$ is indeed a $W^*$-probability space.

For any $S$ in $\CS$ we define next $\Phi_S\colon\CM_S\to\CM$ by
$\Phi_S=\pi_{\tau^0}\circ\Phi_S^0$, and we note for $a$ in $\CM_S$ that
\begin{equation*}
\tau\circ\Phi_S(a)=(\tau\circ\pi_{\tau^0})\circ\Phi_S^0(a)
=\tau^0\circ\Phi_S^0(a)=\tau^{00}\circ\Phi_S^0(a)=\tau_S(a).
\end{equation*}
Hence Corollary~\ref{automatic_normality} implies that $\Phi_S$ is
injective and normal. If $S,T\in\CS$, such that $S\le T$, and
$a\in\CM_S$, we note furthermore that
\begin{equation*}
\Phi_T\circ\Phi_{S,T}(a)=\pi_{\tau^0}\circ\Phi_T^0\circ\Phi_{S,T}
=\pi_{\tau^0}\circ\Phi_S^0(a)=\Phi_S(a).
\end{equation*}
To see that $\CM$ is generated as a von Neumann algebra by
$\medcup_{S\in\CS}\Phi_S(\CM_S)$ we use again that
$\CM^0=(\medcup_{S\in\CS}\Phi_S^0(\CM_S))^{=}$ (where $\CC^=$ denotes
the norm closure of $\CC$).
Since $\pi_{\tau^0}$ is norm-continuous and $\pi_{\tau^0}(\CM^0)$ is a
$C^*$-algebra, this implies that
\begin{equation*}
\pi_{\tau^0}(\CM^0)
=\big(\medcup_{S\in\CS}\pi_{\tau^0}\circ\Phi_S^0(\CM_S)\big)^=
=\big(\medcup_{S\in\CS}\Phi_S(\CM_S)\big)^=.
\end{equation*}
Since the norm topology is stronger than the strong operator topology,
this further entails that
\begin{equation*}
\CM=\big(\pi_{\tau^0}(\CM^0)\big)^{-s}
=\Big(\big(\medcup_{S\in\CS}\Phi_S(\CM_S)\big)^=\Big)^{-s} 
=\big(\medcup_{S\in\CS}\Phi_S(\CM_S)\big)^{-s}
\end{equation*}
(where $\CC^{-s}$ denotes the closure of $\CC$ in the strong operator
topology) as desired. 

We establish finally the uniqueness statement.
If $(\CM',\tau')$ is another $W^*$-probability space
satisfying the conditions listed for $(\CM,\tau)$, we consider the 
injective $*$-monomorphisms $\Phi_S'\colon\CM_S\to\CM'$ corresponding
to the $\Phi_S$'s. It follows then that the two families of operators
$\medcup_{S\in\CS}\Phi_S(\CM_S)$ and $\medcup_{S\in\CS}\Phi_S'(\CM_S)$
(indexed by $\medcup_{S\in\CS}\CM_S$) have the same $*$-distribution.
Hence Proposition~\ref{iso_lemma} yields an injective, normal
$*$-isomorphism $\Psi$ from $\CM=(\medcup_{S\in\CS}\Phi_S(\CM_S))^{-s}$ onto
$(\medcup_{S\in\CS}\Phi_S'(\CM_S))^{-s}=\CM'$, such that $\tau=\tau'\circ\Psi$.
\end{proof}

Before stating the next proposition we recall that the symbol
``$\overset{\mathrm{P}}{\to}$'' refers to convergence in the measure
topology. 

\begin{proposition}\label{ext_to_unbounded_ops}
Let $(\CM,\tau)$ and $(\CN,\psi)$ be $W^*$-probability spaces, and let
$\Phi\colon\CM\to\CN$ be a $*$-homomorphism such that
$\tau=\psi\circ\Phi$. Let further $\overline{\CM}$ and
$\overline{\CN}$ denote the set of (closed densely defined) operators
affiliated with $\CM$ and $\CN$, respectively. We then have

\begin{enumerate}[i]

\item
$\Phi$ extends to an injective mapping
$\overline{\Phi}\colon\overline{\CM}\to\overline{\CN}$ which preserves
the operations of scalar multiplication, strong sum, strong
multiplication and the $*$-operation. In addition
$\overline{\Phi}(\overline{\CM})=\overline{\Phi(\CM)}$, and
$(\overline{\Phi})^{-1}=\overline{\Phi^{-1}}$.

\item
If $(a_l)_{l\in\N}$ is a sequence of operators from $\CM$,
$a\in\overline{\CM}$ and $a_l\overset{\rm P}{\to} a$, then also
$\Phi(a_l)\overset{\rm P}{\to}\overline{\Phi}(a)$.

\item
$\olPhi$ preserves spectral calculus in the sense that
\begin{equation*}
\olPhi(f(a))=f(\olPhi(a))
\end{equation*}
for any selfadjoint operator $a$ in $\olCM$ and any function $f$ from
$\CBF(\R)$. 

\end{enumerate}
\end{proposition}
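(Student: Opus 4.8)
The plan is to exploit that $\Phi$ is trace-preserving in order to turn it into a homeomorphism for the measure topologies, and then to extend everything by density and continuity. First I would record the computation underlying the whole argument: for $a\in\CM$ one has $|\Phi(a)|=\Phi(|a|)$ (because $\Phi(a)^*\Phi(a)=\Phi(a^*a)$ and the $*$-homomorphism $\Phi$ commutes with the continuous square root), and since $\Phi$ is normal (Corollary~\ref{automatic_normality}) it preserves the bounded Borel functional calculus of selfadjoint elements, so $1_{(\epsilon,\infty)}(|\Phi(a)|)=\Phi\big(1_{(\epsilon,\infty)}(|a|)\big)$. Combined with $\tau=\psi\circ\Phi$ this gives $\psi[\,1_{(\epsilon,\infty)}(|\Phi(a)|)\,]=\tau[\,1_{(\epsilon,\infty)}(|a|)\,]$ for all $a\in\CM$ and all $\epsilon>0$. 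Hence $\Phi$ carries $N(\epsilon,\delta)\cap\CM$ exactly onto $N(\epsilon,\delta)\cap\Phi(\CM)$, and since $\Phi(a)-\Phi(b)=\Phi(a-b)$ this means $\Phi$ is a uniform isomorphism of $\CM$ onto $\Phi(\CM)$ for the relative measure topologies. As $\Phi(\CM)$ is a von Neumann subalgebra of $\CN$ carrying the faithful normal trace $\psi|_{\Phi(\CM)}$ (Corollary~\ref{automatic_normality}), the operators affiliated with it form $\overline{\Phi(\CM)}\subseteq\olCN$, and the identity above shows that the measure topology of $\overline{\Phi(\CM)}$ agrees with the one induced from $\olCN$.

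For (i) I would first use that $\CM$ is dense in $\olCM$ in the measure topology; concretely, for $a\in\olCM$ with polar decomposition $a=u|a|$ the truncations $a_n=u\,|a|\,1_{[0,n]}(|a|)\in\CM$ satisfy $|a-a_n|=|a|\,1_{(n,\infty)}(|a|)$, whence $a_n\overset{\rm P}{\to}a$ (alternatively cite \cite{BNT06,te}). Since $\olCN$ is complete and the measure topology is first countable, the uniformly continuous $\Phi$ extends uniquely to a continuous $\olPhi\colon\olCM\to\olCN$, and because the $\Phi(a_n)$ lie in the complete subspace $\overline{\Phi(\CM)}$ we get $\olPhi(\olCM)\subseteq\overline{\Phi(\CM)}$. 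The algebraic identities transfer by density: given $a,b\in\olCM$ and approximants $a_n,b_n\in\CM$, continuity of scalar multiplication, strong sum, strong product and the $*$-operation (recalled for the measure topology in the preliminaries), together with $\olPhi|_{\CM}=\Phi$ and the fact that on $\CM$ the strong operations are the ordinary ones, yield e.g. $\olPhi(a+b)=\lim(\Phi(a_n)+\Phi(b_n))=\olPhi(a)+\olPhi(b)$, and likewise for products and adjoints. Finally, applying the same construction to the trace-preserving $*$-isomorphism $\Phi^{-1}\colon\Phi(\CM)\to\CM$ produces $\overline{\Phi^{-1}}\colon\overline{\Phi(\CM)}\to\olCM$; uniqueness of continuous extensions forces $\overline{\Phi^{-1}}\circ\olPhi=\id$ and $\olPhi\circ\overline{\Phi^{-1}}=\id$, so $\olPhi$ is a bijection of $\olCM$ onto $\overline{\Phi(\CM)}$ with $(\olPhi)^{-1}=\overline{\Phi^{-1}}$; in particular $\olPhi$ is injective and $\olPhi(\olCM)=\overline{\Phi(\CM)}$.

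Assertion (ii) is then immediate: if $a_l\in\CM$ and $a_l\overset{\rm P}{\to}a$, continuity of $\olPhi$ gives $\Phi(a_l)=\olPhi(a_l)\overset{\rm P}{\to}\olPhi(a)$. For (iii), let $a\in\olCM$ be selfadjoint; then $\olPhi(a)$ is selfadjoint because $\olPhi$ preserves $*$. The crucial input is resolvent preservation: for $z\in\C\setminus\R$ the bounded operator $(a-z)^{-1}\in\CM$ satisfies $(a-z\unit_{\CM})(a-z)^{-1}=\unit_{\CM}$ as a strong product/sum, and applying the homomorphism $\olPhi$ (which fixes scalars and $\unit_{\CM}$) gives $\olPhi\big((a-z)^{-1}\big)=(\olPhi(a)-z)^{-1}$. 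Consequently the two unital normal $*$-homomorphisms $f\mapsto\Phi(f(a))$ and $f\mapsto f(\olPhi(a))$ from $\CBF_b(\R)$ into $\CN$ agree on all resolvent functions $t\mapsto(t-z)^{-1}$; as these generate and both maps are normal, they coincide on all of $\CBF_b(\R)$, i.e. $\olPhi(f(a))=f(\olPhi(a))$ for bounded Borel $f$ (applying $\psi$ also yields $\spL\{\olPhi(a)\}=\spL\{a\}$). For general $f\in\CBF(\R)$ I would truncate by $f_n=f\,1_{[-n,n]}$: since $\spL\{a\}$ and $\spL\{\olPhi(a)\}$ are probability measures one has $f_n(a)\overset{\rm P}{\to}f(a)$ and $f_n(\olPhi(a))\overset{\rm P}{\to}f(\olPhi(a))$, so continuity of $\olPhi$ and the bounded case give $\olPhi(f(a))=\lim_n\olPhi(f_n(a))=\lim_n f_n(\olPhi(a))=f(\olPhi(a))$.

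The main obstacle is (iii): pushing Borel functional calculus through the measure-topology completion from bounded to unbounded operators. What makes it work is the combination of the trace-size identity of the first paragraph — which makes $\olPhi$ a genuine homeomorphism onto $\overline{\Phi(\CM)}$, so that all limits may be manipulated freely — and resolvent preservation, which transports the spectral data using only normality of the two homomorphisms $f\mapsto\Phi(f(a))$ and $f\mapsto f(\olPhi(a))$ and never requires continuity of the (discontinuous) Borel functional calculus. A secondary point deserving care is the verification that $\overline{\Phi(\CM)}$ sits inside $\olCN$ with matching measure topology, as this is what legitimizes comparing the two completions and identifying the inverse of $\olPhi$ with $\overline{\Phi^{-1}}$.
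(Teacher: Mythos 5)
Your proposal is correct, and parts (i) and (ii) follow essentially the same path as the paper: the trace identity $\psi[1_{(\epsilon,\infty)}(|\Phi(a)|)]=\tau[1_{(\epsilon,\infty)}(|a|)]$ (the paper's computation \eqref{VNP_eq3}), density of $\CM$ in $\olCM$, completeness and first countability of the measure topology, extension by (uniform) continuity, and identification of the inverse with $\overline{\Phi^{-1}}$. Your packaging of the estimate as a two-sided statement -- $\Phi$ maps $N(\epsilon,\delta)\cap\CM$ onto $N(\epsilon,\delta)\cap\Phi(\CM)$, so that $\olPhi$ is a homeomorphism onto $\overline{\Phi(\CM)}$ -- is a slightly cleaner way to get injectivity and surjectivity onto $\overline{\Phi(\CM)}$ than the paper's Cauchy-sequence bookkeeping, but it is the same mechanism. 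Where you genuinely diverge is part (iii). The paper defines $\Psi(f)=\olPhi(f(a))$ on all of $\CBF(\R)$ at once, checks that $\Psi$ is a $\sigma$-normal $*$-homomorphism into $\overline{\Phi(W^*(\{a\}))}$ (using that $\olPhi$ preserves least upper bounds), and invokes the uniqueness statement of \cite[Theorem~5.6.27]{kr} to conclude $\Psi(f)=f(\olPhi(a))$. You instead first prove resolvent preservation $\olPhi((a-z)^{-1})=(\olPhi(a)-z)^{-1}$ from the algebraic properties of $\olPhi$ established in (i), deduce agreement of the two $\sigma$-normal homomorphisms $f\mapsto\Phi(f(a))$ and $f\mapsto f(\olPhi(a))$ on $\CBF_b(\R)$ by a generation/monotone-class argument, and then pass to unbounded $f$ by truncation and continuity of $\olPhi$. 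Both routes are valid; the paper's buys the unbounded case in one stroke from the cited uniqueness theorem, while yours is more self-contained at the bounded level but still needs $\sigma$-normality of both calculi to go from resolvents to all bounded Borel functions, so the net saving is modest.

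One small repair is needed in your last step: the truncation $f_n=f1_{[-n,n]}$ (cutting off the \emph{argument}) need not be bounded, since a Borel function can be unbounded on a compact set; so the bounded case does not apply to these $f_n$. Use instead the range truncation $f_n=f1_{\{|f|\le n\}}$, exactly as in the paper's display \eqref{VNP_eq5}; the same computation $\tau[1_{[\epsilon,\infty)}(|f_n(a)-f(a)|)]=\spL\{a\}(\{|f_n-f|\ge\epsilon\})\to0$ then gives $f_n(a)\overset{\rm P}{\to}f(a)$ and $f_n(\olPhi(a))\overset{\rm P}{\to}f(\olPhi(a))$, and your limit argument goes through unchanged.
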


\begin{proof} Since $\tau$ is a finite trace it follows from \cite[Example~1,
page~22]{te} that $\olCM$ equals the class of
$\tau$-measurable operators affiliated with $\CM$. Hence (see
\cite[Theorem~28]{te}) $\olCM$ is a complete Hausdorff topological
$*$-algebra with respect to the measure topology. In addition $\CM$
is dense in $\olCM$ with respect to the measure topology,
and this topology is first countable. Of course similar statements hold 
for $\olCN$ in relation to $(\CN,\psi)$.

Now given $a$ in $\olCM$ we may choose a sequence $(a_n)_{n\in\N}$ from $\CM$
such that $a_n\overset{\rm P}{\to}a$. Since $\Phi$ is normal it
follows then for any positive $\epsilon$ that
\begin{equation}
\begin{split}
\psi\big[1_{[\epsilon,\infty)}\big(|\Phi(a_n)-\Phi(a_m)|\big)\big]
&=\psi\big[1_{[\epsilon,\infty)}\big(|\Phi(a_n-a_m)|\big)\big]
=\psi\big[1_{[\epsilon,\infty)}\big(\Phi(|a_n-a_m|)\big)\big]
\\[.2cm]
&=\psi\big[\Phi\big(1_{[\epsilon,\infty)}(|a_n-a_m|)\big)\big]
=\tau\big[1_{[\epsilon,\infty)}(|a_n-a_m|)\big]
\longrightarrow0,
\end{split}
\label{VNP_eq3}
\end{equation}
as $n,m\to\infty$. Hence $(\Phi(a_n))_{n\in\N}$ is a Cauchy sequence in $\CN$
($\subseteq\olCN$) with respect to the measure topology, so there
exists an element $b$ in $\olCN$ such that $\Phi(a_n)\overset{\rm P}{\to} b$ as
$n\to\infty$. Considering another sequence $(a_n')_{n\in\N}$ from $\CM$ such
that $a_n'\overset{\rm P}{\to} a$, we may further consider the mixed
sequence $a_1,a_1',a_2,a_2',\ldots$ which also converges to $a$ in
probability. Hence the argument above shows that the sequences
$(\Phi(a_n'))_{n\in\N}$ and $\Phi(a_1),\Phi(a_1'),\Phi(a_2),\Phi(a_2'),\ldots$
converge in probability to elements $b'$ respectively $b''$ from
$\olCN$. By subsequence considerations we must have that $b=b''=b'$,
and hence we may define a mapping $\olPhi\colon\olCM\to\olCN$ by
setting
\begin{equation}
\olPhi(a)=\text{P-}\lim_{n\to\infty}\Phi(a_n), \qquad(a\in\olCM),
\label{VNP_eq4}
\end{equation}
where $(a_n)_{n\in\N}$ is any sequence from $\CM$ such that $a_n\overset{\rm
  P}{\to}a$ as $n\to\infty$. 

From this definition and the fact that scalar multiplication, strong
sum, strong multiplication and the $*$-operation are all continuous
operations in the measure topology, it follows by standard arguments
that
\begin{equation*}
\olPhi(\lambda a)=\lambda\Phi(a), \quad
\olPhi(a+b)=\olPhi(a)+\olPhi(b), \quad 
\olPhi(ab)=\olPhi(a)\olPhi(b), \quad
\olPhi(a^*)=\olPhi(a)^*
\end{equation*}
for any $a,b$ from $\olCM$ and $\lambda$ in $\C$. In other words
$\olPhi$ is a $*$-homomorphism.

Recalling from Lemma~\ref{automatic_normality} that $\Phi(\CM)$ is a
von Neumann subalgebra of $\CN$,
we check next that $\olPhi(\olCM)=\overline{\Phi(\CM)}$. Since
$\overline{\Phi(\CM)}$ is the closure of $\Phi(\CM)$ in the measure
topology, the definition \eqref{VNP_eq4} clearly implies that
$\olPhi(\olCM)\subseteq\overline{\Phi(\CM)}$. Conversely, given $b$ in
$\overline{\Phi(\CM)}$, we may choose a sequence $(a_n)_{n\in\N}$ from $\CM$
such that $\Phi(a_n)\overset{\rm P}{\to} b$. The calculation
\eqref{VNP_eq3} then shows that $(a_n)_{n\in\N}$ is a Cauchy sequence
and hence convergent in the measure topology  to some $a$ from
$\olCM$. Now \eqref{VNP_eq4} implies that
$b=\overline{\Phi}(a)\in\olPhi(\olCM)$. The mapping
$\Phi^{-1}\colon\Phi(\CM)\to\CM$ (cf.\ Corollary~\ref{automatic_normality})
similarly gives rise to a mapping
$\overline{\Phi^{-1}}\colon\overline{\Phi(\CM)}\to\olCM$, and it
  follows easily from \eqref{VNP_eq4} (and the corresponding
  definition of $\overline{\Phi^{-1}}$) that
  $\overline{\Phi^{-1}}\circ\olPhi(a)=a$ for all $a$ in $\olCM$ and
  that $\olPhi\circ\overline{\Phi^{-1}}(b)=b$ for all $b$ in
  $\overline{\Phi(\CM)}$. In particular $\overline{\Phi}$ is injective.

Consider finally a selfadjoint element $a$ from $\olCM$, put
$b=\olPhi(a)\in\overline{\CN}$ and note that $b=b^*$. 
We then define a mapping $\Psi\colon\CBF(\R)\to\overline{\CN}$ by setting
\begin{equation*}
\Psi(f)=\olPhi(f(a)), \qquad(f\in\CBF(\R)).
\end{equation*}
We show next that
$\Psi(f)\in\overline{\Phi(W^*(\{a\}))}$
for all $f$ in $\CBF(\R)$,  where $W^*(\{a\})$ denotes the (abelian)
von Neumann sub-algebra of $\CM$  generated by $a$. For this note
first that $\Phi(W^*(\{a\}))$ is again a von Neumann algebra (cf.\
Lemma~\ref{automatic_normality}). Given $f$ in $\CBF(\R)$ we put
$f_n=f1_{\{|f|\le n\}}$ and note that $f_n(a)\in W^*(\{a\})$ for all
$n$. Using \cite[Corollary~5.6.29]{kr} we find then that
\begin{equation}
\begin{split}
\tau\big[1_{[\epsilon,\infty)}(|f_n(a)-f(a)|)\big]
&=\tau\big[\big(1_{[\epsilon,\infty)}\circ|f_n-f|\big)(a)\big]
=\int_{\R}1_{\{|f_n-f|\ge\epsilon\}}(t)\, \spL\{a\}(\d t)
\\[.2cm]
&=\spL(\{a\})(\{|f_n-f|\ge\epsilon\})
\xrightarrow[n\to\infty]{}0,
\label{VNP_eq5}
\end{split}
\end{equation}
where we used that $f_n\to f$ point-wise and that $\spL\{a\}$ is a
finite measure. Thus $f_n(a)\overset{\rm P}{\to}f(a)$ and hence also
$\Phi(f_n(a))\overset{\rm P}{\to}\olPhi(f(a))$. Since
$\overline{\Phi(W^*(\{a\}))}$ is complete in the measure topology, it
follows that $\overline{\Phi}(f(a))\in\overline{\Phi(W^*(\{a\}))}$ as desired,
and in particular we have that $b=\overline{\Phi}(a)\in\overline{\Phi(W^*(\{a\}))}$.

Note next that $\Psi$ is a $*$-homomorphism (since both $\olPhi$ and the mapping
$f\mapsto f(a)$ are $*$-homomorphisms), and furthermore $\Psi$ is
$\sigma$-normal in the sense of $\cite{kr}$, since the mapping
$f\mapsto f(a)$ is $\sigma$-normal (cf.\ \cite[Theorem~5.6.26]{kr}), and since
$\olPhi$ preserves least upper bounds (because $\olPhi$ and
$\olPhi^{-1}$ both preserve positivity). 

The observations above allow us to apply
\cite[Theorem~5.6.27]{kr} by which we infer that $\Psi$ is the
spectral mapping associated to $b$, i.e.\ $f(\olPhi(a))=\olPhi(f(a))$ for all
$f$ in $\CBF(\R)$. This completes the proof.
\end{proof}

\begin{corollary}\label{composition_lemma}
Let $(\CM,\tau)$, $(\CN,\psi)$ and $(\CL,\varpi)$ be $W^*$-probability
spaces, and let $\Phi\colon\CM\to\CN$ and $\Gamma\colon\CN\to\CL$ be
$*$-homomorphisms such that $\tau=\psi\circ\Phi$, and
$\psi=\varpi\circ\Gamma$. 

Then $\Phi$ and $\Gamma$ are both normal and injective, and for any
$a$ in $\olCM$ we have that
$\overline{\Gamma\circ\Phi}(a)=\overline{\Gamma}\circ\olPhi(a)$.
\end{corollary}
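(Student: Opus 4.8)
The plan is to read both assertions off from the two results already proved in this appendix, namely Corollary~\ref{automatic_normality} and Proposition~\ref{ext_to_unbounded_ops}; no genuinely new argument should be needed.

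First I would dispose of the normality and injectivity claims. Since $\tau=\psi\circ\Phi$ and $\psi=\varpi\circ\Gamma$, Corollary~\ref{automatic_normality} applies directly to each of $\Phi$ and $\Gamma$, so both are normal and injective with von Neumann subalgebra ranges. Observing next that $\Gamma\circ\Phi\colon\CM\to\CL$ is again a $*$-homomorphism and that $\varpi\circ(\Gamma\circ\Phi)=(\varpi\circ\Gamma)\circ\Phi=\psi\circ\Phi=\tau$, the same corollary applies to the composition as well. Consequently all three maps satisfy the hypothesis of Proposition~\ref{ext_to_unbounded_ops} and extend to $*$-homomorphisms $\olPhi\colon\olCM\to\olCN$, $\overline{\Gamma}\colon\olCN\to\overline{\CL}$ and $\overline{\Gamma\circ\Phi}\colon\olCM\to\overline{\CL}$.

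Next I would establish the identity $\overline{\Gamma\circ\Phi}=\overline{\Gamma}\circ\olPhi$ by unwinding the defining formula \eqref{VNP_eq4} for these extensions. I would fix $a$ in $\olCM$ and, using that $\CM$ is dense in $\olCM$ in the measure topology, choose a sequence $(a_n)_{n\in\N}$ from $\CM$ with $a_n\overset{\rm P}{\to}a$. Formula \eqref{VNP_eq4} applied to $\Gamma\circ\Phi$ then gives $\overline{\Gamma\circ\Phi}(a)=\text{P-}\lim_{n\to\infty}\Gamma(\Phi(a_n))$, while the same formula for $\Phi$ gives $\Phi(a_n)\overset{\rm P}{\to}\olPhi(a)$ in $\olCN$. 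The key point to verify is that the operators $\Phi(a_n)$ lie in the bounded algebra $\CN$, so that $(\Phi(a_n))_{n\in\N}$ is precisely an approximating sequence of the type handled by Proposition~\ref{ext_to_unbounded_ops}(ii); applying that continuity statement to $\Gamma$ then yields $\Gamma(\Phi(a_n))\overset{\rm P}{\to}\overline{\Gamma}(\olPhi(a))$.

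Finally I would combine the two computations: the single sequence $(\Gamma(\Phi(a_n)))_{n\in\N}$ converges in the measure topology both to $\overline{\Gamma\circ\Phi}(a)$ and to $\overline{\Gamma}(\olPhi(a))$, and since the measure topology is Hausdorff, limits are unique, whence $\overline{\Gamma\circ\Phi}(a)=\overline{\Gamma}\circ\olPhi(a)$. There is no real obstacle here; the only step requiring any care is the bookkeeping observation that $\Phi(a_n)\in\CN$, which is exactly what licenses the application of the continuity property in Proposition~\ref{ext_to_unbounded_ops}(ii) to $\Gamma$.
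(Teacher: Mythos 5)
Your proposal is correct and follows essentially the same route as the paper's own proof: both deduce normality and injectivity from Corollary~\ref{automatic_normality}, approximate $a$ by a sequence from $\CM$ in the measure topology, push it through $\Phi$ and then $\Gamma$ via Proposition~\ref{ext_to_unbounded_ops}, and conclude by uniqueness of limits in the Hausdorff measure topology. Your explicit remark that $\Phi(a_n)\in\CN$ is exactly the bookkeeping the paper leaves implicit, so nothing is missing.
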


\begin{proof} Lemma~\ref{automatic_normality} ensures that $\Phi$, $\Gamma$
and $\Gamma\circ\Phi$ are normal and injective, and
Proposition~\ref{ext_to_unbounded_ops} ensures that the
mappings $\olPhi$, $\overline{\Gamma}$ and
$\overline{\Gamma\circ\Phi}$ are all well-defined.

Given $a$ in $\olCM$ we choose a sequence $(a_n)_{n\in\N}$ from $\CM$
converging to $a$ in the measure topology.
Proposition~\ref{ext_to_unbounded_ops} then entails that
$\Phi(a_n)\xrightarrow[]{\rm P}\olPhi(a)$, and hence that
$\Gamma\circ\Phi(a_n)\xrightarrow[]{\rm P}\overline{\Gamma}(\olPhi(a))$. In addition
$\Gamma\circ\Phi(a_n)\xrightarrow[]{\rm P}\overline{\Gamma\circ\Phi}(a)$,
and since the measure topology is a Hausdorff topology, we obtain the
desired conclusion.
\end{proof}

\begin{small}

\end{small}

\bigskip
\begin{minipage}[c]{0.3\textwidth}
\begin{small}
Dipartimento di Matematica\\
Univ.~degli Studi di Padova\\                                                                          
via Trieste 63\\
35121 Padova\\ 
Italy\\
{\tt fcollet@math.unipd.it}
\end{small}
\end{minipage}
\hfill
\begin{minipage}[c]{0.3\textwidth}
\begin{small}
School of Math.~Sciences\\
University of Nottingham\\
University Park\\
Nottingham NG7 2RD\\
United Kingdom\\
{\tt fabrizio.leisen@gmail.com}
\end{small}
\end{minipage}
\hfill
\begin{minipage}[c]{0.3\textwidth}
\begin{small}
Dept.~of Mathematics\\
University of Aarhus\\
Ny Munkegade 118\\
8000 Aarhus C\\
Denmark\\
{\tt steenth@imf.au.dk}
\end{small}
\end{minipage}

\end{document}